\documentclass{amsart}
\usepackage{cite}
\usepackage[final]{graphicx}
\usepackage[margin=1.5in,letterpaper]{geometry}
\usepackage{booktabs}
\usepackage{url}
\usepackage[T1]{fontenc}
\usepackage{ae,aecompl}
\usepackage[utf8]{inputenc}

\usepackage{caption}
\usepackage{hyphenat}
\usepackage{tikz}
\usetikzlibrary{matrix,arrows}
\tikzset{cdarrow/.style={auto,
    execute at begin node=$\scriptstyle,execute at end node=$}}

\setlength{\marginparwidth}{1.2in}

\newread\testin

\graphicspath{{draws/}}


\newcommand{\RR}{\mathbb R}
\newcommand{\CC}{\mathbb C}

\newcommand{\QQ}{\mathbb Q}

\newcommand{\EE}{\mathbb E}
\newcommand{\HH}{\mathbb H}
\newcommand{\MM}{\mathbb M}
\newcommand{\FF}{\mathbb F}
\newcommand{\SSS}{\mathbb S}

\newcommand{\kk}{\mathbf{k}}
\newcommand{\BB}{\mathbf{B}}
\newcommand{\MB}{\mathbf{M}}
\newcommand{\SB}{\mathbf{S}}
\newcommand{\DB}{\mathbf{D}}
\newcommand{\VB}{\mathbf{V}}
\newcommand{\WB}{\mathbf{W}}
\newcommand{\PB}{\mathbf{P}}
\newcommand{\gB}{\mathbf{G}}

\newcommand{\bvec}[1]{{\overrightarrow{#1}}}




\DeclareMathOperator{\spanop}{span}




\theoremstyle{plain}
\numberwithin{equation}{section}
\newtheorem{law}{Law}
\newtheorem{theorem}{Theorem}
\newtheorem{proposition}{Proposition}

\newtheorem{lemma}{Lemma}
\newtheorem{corollary}{Corollary}

\theoremstyle{definition}
\newtheorem{definition}{Definition}

\theoremstyle{remark}

\newtheorem{remark}{Remark}

\newcommand{\Edges}{{\mathcal E}}
\newcommand{\Verts}{{\mathcal V}}
\newcommand{\GG}{{\mathcal G}}


\begin{document}
\title{Generic Global Rigidity in Complex and Pseudo-Euclidean Spaces}

\author[Gortler]{Steven J. Gortler}
\author[Thurston]{Dylan P. Thurston}

\begin{abstract}
In this paper we study the property of generic global rigidity 
for frameworks of graphs embedded in  d-dimensional complex space
and in a d-dimensional pseudo-Euclidean space ($\RR^d$ with a metric
of indefinite signature). We show that a graph is generically globally 
rigid in  Euclidean space iff it is generically globally rigid in 
a complex or pseudo-Euclidean space. We also establish that
global rigidity is always a  generic property of a graph in complex space,
and give a sufficient condition for it to be a generic property in 
a pseudo-Euclidean space. 
Extensions to hyperbolic space are 
also discussed.
\end{abstract}

\date{\today}


\maketitle

\section{Introduction}

The property of generic global rigidity of a graph in d-dimensional
Euclidean space has recently been fully 
characterized~\cite{Connelly05:GenericGlobalRigidity,GHT10}.
It is quite natural to study this property in other spaces as well.
For example, recent work of Owen and Jackson~\cite{owjack} has studied
the number of equivalent realizations of frameworks in $\CC^2$.
In this paper we study the property of generic global rigidity 
of graphs embedded 
in $\CC^d$ as well as graphs embedded in a pseudo Euclidean space 
($\RR^d$  equipped with an 
indefinite metric signature). 

We show that a graph $\Gamma$ is generically globally rigid (GGR) in 
d-dimensional Euclidean space iff $\Gamma$ 
is GGR in d-dimensional complex space.
Moreover, for any metric signature, $s$, 
We show that a graph $\Gamma$ is GGR in 
d-dimensional Euclidean space iff $\Gamma$ is GGR in d-dimensional real space
under the signature $s$. Combining this with results 
from~\cite{connelly2010global} also allows us to equate this property with 
generic global rigidity in hyperbolic space.

In the Euclidean and complex cases, global rigidity can be shown
to be a generic property: a given graph is either generically 
globally rigid, or generically globally flexible. In the 
pseudo Euclidean
(and equivalently the hyperbolic) case, though, we do not know this to be 
true. In this paper we do establish that global rigidity in pseudo Euclidean
spaces is a generic property for graphs that contain a large enough GGR
subgraph (such as a d-simplex).

\subsection*{Acknowledgments}
We would  like to thank 
Robert Connelly, 
Bill Jackson,
John Owen, 
Louis Theran, and
for helpful
conversations and suggestions.
We would especially like to thank
Walter Whiteley for sharing with us 
his explanation of the Pogorelov map.
\section{Initial Definitions}
\label{sec:definitions}

\begin{definition}
We equip $\RR^d$ with 
pseudo Euclidean metric in order to measure lengths.
The metric is specified with a non negative integer $s$
that determines  how many of its 
coordinate directions are subtracted from the total.
The squared length of a vector $\vec{w}$
is $|\vec{w}|^2:= -\sum_{i=1}^s \vec{w}_i^2 + \sum_{i=s+1}^d \vec{w}_i^2 $.
We will use the symbol 
$\SSS^d$ to denote the space $\RR^d$ equipped with some
fixed  metric $s$.
If $s=0$, 
we  have the Euclidean metric and the space may be denoted $\EE^d$.

For complex space, 
The squared length of a vector $\vec{w}$
in $\CC^d$ is
$|\vec{w}|^2 := \sum_{i} \vec{w}_i^2$.
Note here that we do not use conjugation, and 
thus vectors have complex squared lengths. 
(The use of conjugation would essentially reduce
d-dimensional complex rigidity questions to 2d-dimensional Euclidean
questions).
\end{definition}

\begin{definition}\label{def:config-space}
  A \emph{graph}~$\Gamma$ is a set of $v$ vertices $\Verts(\Gamma)$
  and $e$ edges~$\Edges(\Gamma)$, where $\Edges(\Gamma)$ is a set of\
  two\hyp element subsets of $\Verts(\Gamma)$.  We will typically drop
  the graph~$\Gamma$ from this notation.

For $\FF \in \{\EE,\SSS,\CC\}$,
  a \emph{configuration} of the vertices $\Verts(\Gamma)$ of a
  graph in $\FF^d$  is 
  a mapping~$p$ from $\Verts(\Gamma)$ to~$\FF^d$.
Let $C_{\FF^d}(\Verts)$ 
be the space of configurations in $\FF^d$.

For $p \in C_{\FF^d}(\Verts)$ with 
$u
  \in \Verts(\Gamma)$, 
we write $p(u) \in \FF^d$ for the image of $u$ under~$p$.

A \emph{framework} 
$\rho = (p,\Gamma)$ of a graph is the pair of a 
graph and a configuration
of its vertices.  $C_{\FF^d}(\Gamma)$ 
is the space of frameworks $(p,
\Gamma)$ with graph $\Gamma$ and configurations in $\FF^d$.

We may also
write $\rho(u)$ for $p(u)$ where $\rho = (p,\Gamma)$ is a framework of the
configuration~$p$.
\end{definition}

\begin{definition}
Two frameworks $\rho$ and $\sigma$ in 
$C_{\FF^d}(\Gamma)$ are \emph{equivalent} if for all $\{t,u\} \in \Edges$
we have $|\rho(t)-\rho(u)|^2=|\sigma(t)-\sigma(u)|^2$.
\end{definition}

\begin{definition}
Two configurations $p$ and $q$ in 
$C_{\FF^d}(\Verts)$ are \emph{congruent} if for all vertex pairs,  $\{t,u\}$,
we have $|p(t)-p(u)|^2=|q(t)-q(u)|^2$.

Two configurations $p$ and $q$ in 
$C_{\FF^d}(\Verts)$ are \emph{strongly congruent} 
if they are related by a translation composed with
an element of the orthogonal group of $\FF^d$. 
\end{definition}

\begin{remark}
In $\EE^d$, there is no difference between congruence and strong congruence.
In other spaces, though, there
can be some subtle differences. For the simplest example, in $\CC^2$, the
vectors $(0,0)$ and 
$(i,1)$ both have zero length, but are not related by a complex
orthogonal transform. 
Such non-zero vectors with zero squared length are called 
\emph{isotropic}.
Thus the framework  made up of a single 
edge connecting
a vertex at the origin to a vertex at $(i,1)$ is congruent to the framework
with both vertices at the origin, but the two frameworks are not
strongly congruent.

Fortunately, these differences are easy to avoid;
for example,  congruence and strong congruence
coincide 
for points with 
a d-dimensional affine span. These notions 
 will also coincide when there are 
fewer than $d+1$ points, as long as the points are in 
affine general position.
For more details,
see Section~\ref{app:cong}.
\end{remark}

We can now, finally, define global rigidity and flexibility.

\begin{definition}
A framework
$\rho  \in C_{\FF^d}(\Gamma)$ is
\emph{globally rigid} in $\FF^d$
if, for any
other framework $\sigma  \in C_{\FF^d}(\Gamma)$ to which 
$\rho$ 
is equivalent, 
we also have that 
$\rho$ is 
congruent to
 $\sigma$.
Otherwise we say
that $\rho$ is \emph{globally flexible} in $\FF^d$.
\end{definition}

\begin{definition}
  \label{def:generic}
A configuration~$p$
in $C_{\FF^d}(\Verts)$
is generic if the 
coordinates do
  not satisfy any non-zero algebraic equation with rational
  coefficients.  We call a framework generic if its configuration is
  generic.
(See  Section~\ref{app:alg} for more background on (semi) algebraic sets and
genericity).
\end{definition}

\begin{definition}
A graph $\Gamma$ is generically globally rigid (resp. flexible)
in $\FF^d$ 
if all generic frameworks in 
$C_{\FF^d}(\Gamma)$ 
are globally rigid (resp. flexible). These properties
are abbreviated GGR and GGF.
\end{definition}

\begin{definition}
A property is \emph{generic} 
if, for every graph, either all generic frameworks in $C_{\FF^d}(\Gamma)$ 
have the property
or none do.  
For instance, global rigidity in
$\EE^d$ is a  generic property of a graph~\cite{GHT10}.  So in this
case, if a graph is not GGR, it must be GGF.
\end{definition}

\section{Complex Generic Global Rigidity}
Our main theorem in this section is 
\begin{theorem}
\label{thm:complex}
A graph $\Gamma$ 
is generically globally rigid in $\CC^d$
iff it
is generically globally rigid in $\EE^d$.
\end{theorem}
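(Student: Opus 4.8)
The plan is to exploit the algebraic characterization of generic global rigidity in Euclidean space due to Connelly and to Gortler–Healy–Thurston: a graph $\Gamma$ with at least $d+2$ vertices is GGR in $\EE^d$ if and only if some (equivalently every) generic framework admits an equilibrium stress matrix $\Omega$ of rank $v-d-1$. The key observation is that this criterion, together with the rigidity-matrix rank condition, is purely algebraic in the coordinates of $p$ and in the entries of the candidate stress — nothing in it refers to the reals as an ordered field or to positivity. So the natural strategy is to show that the same rank criterion characterizes GGR in $\CC^d$, and then observe that the relevant rank conditions are witnessed by the non-vanishing of certain polynomials with rational coefficients, hence hold at a generic point of $\CC^d$ exactly when they hold at a generic point of $\EE^d$.

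Concretely I would proceed as follows. First, set up the measurement map $m_\Gamma \colon C_{\FF^d}(\Gamma) \to \FF^e$ sending a configuration to its squared edge lengths, and its differential, the rigidity matrix; note that generic local rigidity in $\FF^d$ is governed by $\rank dm_\Gamma = vd - \binom{d+1}{2}$, a condition cut out by rational polynomials, hence insensitive to whether $\FF = \RR$ or $\CC$. Second, I would prove the "sufficiency" direction: if a generic complex framework has a stress matrix of rank $v-d-1$, then it is globally rigid in $\CC^d$. The cleanest route is to adapt the Connelly argument: given an equivalent framework $\sigma$, the stress $\Omega$ is also an equilibrium stress for $\sigma$ (since equilibrium is linear in the configuration and only depends on edge lengths through the stress coefficients), its kernel has dimension $d+1$ and contains the affine span of both $p$ and $\sigma$, and a dimension count forces $\sigma$ to be an affine image of $p$; then equality of edge lengths forces the affine map to be a complex orthogonal transformation (here one uses the genericity to guarantee a $d$-dimensional affine span, so that congruence upgrades to strong congruence as promised in the remark). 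Third, the "necessity" direction: if $\Gamma$ is GGR in $\CC^d$ but has no generic stress of the required rank, I would produce an equivalent but non-congruent complex framework, either by importing the averaging/flexing construction of GHT or by arguing that the set of frameworks equivalent to a given generic one is a positive-dimensional algebraic set whenever the stress rank is deficient.

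The main obstacle I anticipate is the necessity direction and, relatedly, the interface with genericity over the two fields. Over $\RR$ the GHT proof of "no good stress $\Rightarrow$ not GGR" uses real-analytic and Sard-type arguments (paths of configurations, the structure of the real measurement variety) that do not transplant verbatim to $\CC$; the honest fix is probably to phrase everything in terms of the (complex) measurement variety and the generic fiber dimension of $m_\Gamma$, using that $\dim_\CC$ of a complex variety defined over $\QQ$ equals $\dim_\RR$ of its real points when those are Zariski-dense, so that the fiber dimension — hence the existence of a nontrivial equivalent framework — is the same over both fields. One then must check that a generic point of $\CC^d$ is generic in the sense of Definition~\ref{def:generic} and lies in the smooth locus where these dimension counts are valid, and that the passage from "$\sigma$ in the same fiber as $p$" to "$\sigma$ not congruent to $p$" survives; isotropic directions are the thing to watch, but genericity of $p$ rules them out on the affine span. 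Assembling these pieces, GGR in $\EE^d$ and GGR in $\CC^d$ are both equivalent to the single field-independent statement "$\rank dm_\Gamma$ is maximal and a stress of rank $v-d-1$ exists at a $\QQ$-generic point," which proves the theorem.
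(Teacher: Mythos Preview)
Your core strategy---reduce both sides to the stress-matrix rank criterion---is the same as the paper's, but you are making the argument harder than it needs to be, and the step you flag as the ``main obstacle'' is one the paper simply sidesteps.

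You propose to prove the full equivalence ``GGR in $\CC^d$ $\Leftrightarrow$ existence of a rank-$(v{-}d{-}1)$ stress at a generic point,'' and you correctly identify the necessity direction over $\CC$ (a complex analogue of GHT) as the hard part. The paper never proves this. Instead it argues asymmetrically. For $\Rightarrow$ (GGR in $\CC^d$ implies GGR in $\EE^d$), no stresses are used at all: a generic $\rho\in C_{\EE^d}(\Gamma)$, viewed as a real-valued point of $C_{\CC^d}(\Gamma)$, is still generic over~$\QQ$, so complex GGR forbids any complex (hence any real) equivalent non-congruent framework. For $\Leftarrow$, the paper uses only the \emph{sufficiency} direction of Connelly over~$\CC$ (Theorem~\ref{thm:comcon}, whose proof is identical to the real case) together with the \emph{necessity} direction of GHT over~$\RR$ (Theorem~\ref{thm:ght}, already established). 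The chain is: not GGR in $\CC^d$ $\Rightarrow$ no generic complex framework has a complex stress of rank $v{-}d{-}1$ $\Rightarrow$ in particular no generic real framework has one $\Rightarrow$ (by GHT over $\RR$) not GGR in $\EE^d$.

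So your fiber-dimension workaround and the attempt to transplant the GHT averaging/degree machinery to $\CC$ are unnecessary. The only genuinely new ingredient is the observation that Connelly's sufficiency argument is insensitive to the base field, which you already have; the rest is bookkeeping about genericity under the inclusion $\EE^d\hookrightarrow\CC^d$.
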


\begin{remark}
\label{rem:chevy}
This fully describes the generic situation for complex frameworks as
it is easy to see that 
generic global rigidity in $\CC^d$ is a generic property of a graph.

Recall that a  complex  algebraically \emph{constructible 
set} is a finite Boolean combination of 
complex algebraic sets.
Also,  an irreducible complex algebraic set $V$ cannot have two disjoint
constructible subsets with the same dimension as $V$.

Chevalley's theorem states 
that the image under a polynomial map of a complex
algebraically constructible set, all defined over $\QQ$,
is also 
a complex algebraically constructible set defined over 
$\QQ$~\cite[Theorem 1.22]{basu2006algorithms}.
Chevalley's theorem 
allows one to apply elimination, effectively replacing
all quantifiers in a Boolean-algebraic expression
with algebraic equations and Boolean set operations.

Now, let us assume $\Gamma$ 
is locally 
rigid in $\CC^d$.
We can partition 
$C_{\CC^d}(\Gamma)$ such that in each part, $P_n$ , all of the frameworks
have the same number, $n$, of  equivalent and non-congruent frameworks.
In light of Chevalley's theorem, each of these parts is constructible.
And exactly one of them, $P_{n_0}$, must be  of full dimension. 
This part contains all of the generic points and represents the generic
behavior of the framework. If $n_0=1$ then the graph is GGR, while if
$n_0>1$ then it must be GGF.
\end{remark}

\subsection{=> of Theorem~\ref{thm:complex}}
\label{sec:complex1}
The implication from Complex to Euclidean GGR follows almost
directly from their definitions.
For this argument  we model 
each Euclidean framework $\rho$  in $C_{\EE^d}(\Verts)$ 
as a Complex framework $\rho_\CC$ in 
$C_{\CC^d}(\Verts)$ 
that happens to have all purely real coordinates.
Clearly, for such configurations, the complex squared length
measurement coincides with the Euclidean metric on real configurations.

\begin{proof}
Let $\rho$ be a generic framework in 
$C_{\EE^d}(\Gamma)$ and let $\rho_\CC$ be its corresponding real valued
framework
in 
$C_{\CC^d}(\Gamma)$. By our definitions,
$\rho_\CC$ is also generic when thought of as complex
framework.

Since $\Gamma$ is  generically globally rigid in $\CC^d$,
$\rho_\CC$ can have no equivalent and non-congruent framework in
  $C_{\CC^d}(\Gamma)$,
and thus it has no real valued,
equivalent and non-congruent  framework in 
 $C_{\CC^d}(\Gamma)$.
Thus $\rho$ has no equivalent and non-congruent framework in 
 $C_{\EE^d}(\Gamma)$.
\end{proof}

\subsection{<= of Theorem~\ref{thm:complex}}
\label{sec:complex2}

For the other direction of Theorem~\ref{thm:complex},
we start with a complex version of a theorem by 
Connelly~\cite{Connelly05:GenericGlobalRigidity}:
\begin{theorem}
\label{thm:comcon}
Let $\rho$ be a generic framework in 
$C_{\CC^d}(\Gamma)$. 
If $\rho$ has a complex equilibrium stress matrix
of rank $v-d-1$, then $\Gamma$ is generically globally rigid in $\CC^d$.
\end{theorem}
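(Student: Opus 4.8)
The plan is to mirror Connelly's original argument over~$\RR$, checking that each geometric input survives the passage to~$\CC$. Recall the structure of the Euclidean proof: one shows that the existence of an equilibrium stress matrix~$\Omega$ of maximal rank $v-d-1$ forces, via the rank-nullity theorem applied to the quadratic ``stress'' form, that any equivalent framework $\sigma$ is an affine image of $\rho$; then one uses the stress condition again, together with the genericity of $\rho$, to show the affine map must in fact be an isometry. First I would set up the complex analogue of the rigidity matrix and the stress space: given a framework $\rho$ in $C_{\CC^d}(\Gamma)$, a complex equilibrium stress is an assignment of scalars $\omega_{ij} \in \CC$ to edges with $\sum_j \omega_{ij}(\rho(i)-\rho(j)) = 0$ for each vertex~$i$, and the associated stress matrix $\Omega$ is the weighted graph Laplacian. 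The key linear-algebra fact — that a symmetric matrix annihilating the $d+1$ vectors given by the coordinates of $\rho$ together with the all-ones vector, and having rank exactly $v-d-1$, has precisely that span as its kernel — is purely formal and holds over any field.

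Next I would carry out the ``affine image'' step. If $\sigma$ is equivalent to~$\rho$, then $\Omega$, being an equilibrium stress for the edge directions of~$\rho$, satisfies $\sum_{i,j}\omega_{ij}|\sigma(i)-\sigma(j)|^2 = \sum_{i,j}\omega_{ij}|\rho(i)-\rho(j)|^2$ on edges (the two sides agree edge-by-edge), and since $\Omega$ is supported on edges this equals the full quadratic form; expanding, $\sigma$ also lies in a low-dimensional solution set and one deduces that the configuration matrix of $\sigma$ lies in the row space of the configuration matrix of $\rho$ augmented by ones — i.e. $\sigma = A\rho + b$ for some complex $d\times d$ matrix~$A$. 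This is the standard computation and goes through verbatim over~$\CC$ since it only uses that $\Omega$ is symmetric PSD-free linear algebra — no positivity is needed here, only the rank.

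The final and most delicate step is to promote the affine map~$A$ to a complex orthogonal map (so that $\rho$ and $\sigma$ are strongly congruent, hence congruent). In the Euclidean setting one argues: since $\Omega\rho = 0$ implies $\Omega(A\rho) = 0$, both $\rho$ and $A\rho$ are equivalent configurations realizing the same stress; evaluating the stress quadratic form shows $A^{T}QA = Q$ where $Q$ is the Gram-type matrix of edge vectors, and then a genericity/dimension argument on $\rho$ forces $A^T A = \mathrm{Id}$. Over~$\CC$, ``$A^TA = \mathrm{Id}$'' is exactly the defining equation of the complex orthogonal group, so formally the same conclusion is what we want; the work is in re-running the genericity argument. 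Here I expect the main obstacle: I must verify that the variety of constraints imposed on~$A$ by ``$|A\rho(i)-A\rho(j)|^2 = |\rho(i)-\rho(j)|^2$ for all pairs spanned by the stress kernel'' cuts down, for generic $\rho$, exactly to the complex orthogonal group, and does not pick up extra components coming from isotropic vectors. I would handle this by appealing to the remark in Section~\ref{sec:definitions}: for generic $\rho$ the points are in affine general position and have full $d$-dimensional affine span, so congruence and strong congruence coincide and no isotropic pathologies arise; combined with the fact (provable by a Zariski-density / specialization argument, specializing the generic complex $\rho$ to a generic real one) that the orthogonality constraint has the right dimension, this closes the gap. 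Since the resulting $\sigma = A\rho + b$ with $A$ complex-orthogonal is strongly congruent to $\rho$, and strong congruence implies congruence, $\rho$ is globally rigid; as $\rho$ was an arbitrary generic framework, $\Gamma$ is generically globally rigid in~$\CC^d$.
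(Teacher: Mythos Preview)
Your overall plan---rerun Connelly's argument over $\CC$---is exactly what the paper does; its proof is the one-line assertion that Connelly's Euclidean proof carries over verbatim and yields strong congruence, hence congruence. Your treatment of the final step (affine $\Rightarrow$ complex-orthogonal) is also fine.

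There is, however, a genuine gap in your affine-image step. From the energy identity $\sum_{ij}\omega_{ij}\,|\sigma(i)-\sigma(j)|^2 = \sum_{ij}\omega_{ij}\,|\rho(i)-\rho(j)|^2 = 0$ alone you \emph{cannot} deduce that the coordinate vectors of $\sigma$ lie in $\ker\Omega$: an indefinite (or complex) quadratic form has many nonzero isotropic vectors, so contrary to what you write, ``only the rank'' is not enough here---this is exactly the place where positivity would be used if one had it. What Connelly actually proves, and what does go through over~$\CC$, is the stronger lemma that every equilibrium stress for a generic~$\rho$ is automatically an equilibrium stress for any equivalent~$\sigma$. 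The argument is algebro-geometric rather than via the energy: stresses at $p$ are the annihilators of $\im(df_p)$, where $f$ is the squared-edge-length map; since $\rho$ is generic, $f(\rho)$ is a smooth point of the measurement variety and $\im(df_\rho)$ equals the Zariski tangent space there; since $f(\sigma)=f(\rho)$, one has $\im(df_\sigma)$ contained in that same tangent space, so annihilators transfer. With $\Omega$ now a stress for $\sigma$, the rank hypothesis forces the coordinate vectors of $\sigma$ into $\ker\Omega$ and you get $\sigma=A\rho+b$ as desired; the rest of your sketch then goes through.
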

\begin{proof}
The proof of the complex version
of this theorem follows identically to Connelly's proof of the 
Euclidean version.
In particular, the proof shows that any framework with the same complex 
squared edge
lengths as $\rho$ must be strongly congruent, and thus congruent to it.
\end{proof}
(The interested reader can 
see~\cite{Connelly05:GenericGlobalRigidity} for the definition of an equilibrium stress matrix).

Next, we recall a theorem from Gortler, Healy and Thurston~\cite{GHT10}
\begin{theorem}
\label{thm:ght}
Let $\rho$ be a generic framework in 
$C_{\EE^d}(\Gamma)$ with at least $d+2$ vertices.
If $\rho$ does not have a real equilibrium stress matrix
of rank $v-d-1$, then $\Gamma$
is generically globally flexible in $\EE^d$.
Moreover, there must be an even number of noncongruent frameworks
with the same squared edge lengths as
$\rho$
in $\EE^d$.
\end{theorem}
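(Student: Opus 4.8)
The plan is to reduce the statement to a claim about the degree of the complexified measurement map, and then to control that degree using the equilibrium stress matrices. Write $N := dv - \binom{d+1}{2}$ for the dimension of the space of configurations modulo congruence, and let $m_\Gamma$ denote the measurement map sending a configuration to its vector of squared edge lengths. First I would dispose of the case where $\Gamma$ is not generically locally rigid: then $dm_\Gamma$ has rank strictly less than $N$ at a generic $\rho$, a generic fibre is a positive-dimensional real algebraic set that is smooth of the expected dimension through $\rho$, and moving along it produces a continuum of pairwise non-congruent frameworks equivalent to $\rho$, so $\Gamma$ is GGF (and the ``even'' clause is understood for the locally rigid case). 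I would also assume $\Gamma$ is connected, since a disconnected graph is GGR iff all of its components are. So from now on assume $dm_\Gamma$ has rank $N$ at a generic $\rho$.

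Next I would complexify and pass to degrees. On the $N$-dimensional quotient $\mathcal{C}$ of $C_{\CC^d}(\Verts)$ by complex congruences, the complexified map $m_\Gamma^{\CC}\colon \mathcal{C} \to \CC^e$ is dominant onto an irreducible $N$-dimensional variety $M$ and is generically $\ell$-to-one for some integer $\ell \ge 1$; moreover $\ell$ is the generic number of congruence classes of equivalent complex frameworks (here I would use that a generic configuration has full affine span, so congruence and strong congruence agree), so $\ell = 1$ iff $\Gamma$ is GGR in $\CC^d$. Two routine observations link $\ell$ to the real count. First, for a generic real $\rho$ the number $k$ of congruence classes of real frameworks equivalent to $\rho$ is finite and, by Tarski--Seidenberg, is the same for every generic real $\rho$ (one of the semialgebraic strata on which it is constant is full-dimensional); also $k \ge 1$, with equality iff $\Gamma$ is GGR in $\EE^d$. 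Second, complex conjugation acts on the complex fibre of $m_\Gamma^{\CC}$ over the real point $m_\Gamma(\rho)$, fixing $[\rho]$ and pairing the non-real points, so $k \equiv \ell \pmod 2$. Finally, the maximal rank attained in a linear family of matrices defined over $\QQ$ is the same over $\RR$ as over $\CC$, so the hypothesis is equivalent to ``$\rho$ has no complex stress matrix of rank $v-d-1$''. The whole theorem then follows once I show: \emph{if $\rho$ has no complex stress matrix of rank $v-d-1$, then $\ell$ is even} --- for then $k$ is even, hence $k \ge 2$ and $\Gamma$ is GGF.

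To analyse $\ell$ I would compare $\Gamma$ with the complete graph $K$. Let $c$ be the complete-graph measurement map and $M_K \subset \CC^{\binom{v}{2}}$ its image; this is an $N$-dimensional variety, smooth at $c(\rho)$ for generic $\rho$, and the coordinate projection $\pi\colon M_K \to M$ forgetting the non-edges satisfies $\deg\pi = \ell$. The equilibrium self-stresses of $\rho$ are exactly the conormal directions of $M$ at $m_\Gamma(\rho)$, i.e. the left kernel of the rigidity matrix; more generally the stress matrices of $\rho$ encode the second-order behaviour of $M_K$ transverse to the fibres of $\pi$ near $c(\rho)$, with the rank of a stress matrix measuring the nondegeneracy of that data. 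When some stress matrix attains the maximal rank $v-d-1$ this second-order data is nondegenerate, and Theorem~\ref{thm:comcon} already yields $\ell = 1$; the content is the converse. When every stress matrix has rank strictly below $v-d-1$, I would use the resulting degenerate direction to produce a fixed-point-free involution of the generic fibre of $\pi$ --- equivalently, to show the monodromy of the covering $\pi$ cannot fix a sheet --- and conclude that $\ell$ is even.

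I expect this last step to be the main obstacle, and it is genuinely the heart of the matter: converting ``no stress matrix of full rank $v-d-1$'' into ``$\pi$ has no odd structure'' requires a careful local-analytic study of $M_K$ and $\pi$ near $c(\rho)$ --- for instance solving the complexified edge-length equations by the implicit function theorem to build the pairing explicitly, and checking that it descends to congruence classes and is defined over a small enough field to conclude. Everything preceding it --- the reduction to generic local rigidity, the complexification, the conjugation parity, and the reduction to a statement about $\deg\pi$ --- is comparatively routine.
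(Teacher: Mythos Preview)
The paper does not prove Theorem~\ref{thm:ght}; it is quoted from \cite{GHT10} and used as a black box. There is therefore no in-paper proof to compare your proposal against.

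On the proposal itself: your architecture---complexify, count the generic complex fibre size $\ell$, and transfer the parity to the real count via complex conjugation---is sound and is in the spirit of \cite{GHT10}. Two issues are worth flagging. First, the assertion that the real count $k$ is ``the same for every generic real $\rho$'' is not justified by Tarski--Seidenberg: real solution counts are constant only on connected components of a semialgebraic stratification, and distinct generic (over $\QQ$) points can lie in different full-dimensional chambers. Fortunately you do not need this; you only need that for \emph{each} generic $\rho$ the fibre over $m_\Gamma(\rho)$ is a generic complex fibre (true, since $\rho$ generic implies $m_\Gamma(\rho)$ generic in the image), hence has exactly $\ell$ points, and conjugation then gives $k\equiv \ell\pmod 2$ for that $\rho$.

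Second, and more seriously, the step you correctly identify as the obstacle---deducing that $\ell$ is even from the absence of a rank $v-d-1$ stress matrix---is not achieved by the mechanism you sketch. There is no evident ``fixed-point-free involution of the generic fibre'' manufactured from a degenerate direction in the stress space, and monodromy alone will not give you parity information without further input. The proof in \cite{GHT10} does not proceed this way: it sets up a proper map between oriented manifolds (a Gauss-type map built from the rigidity/stress data), computes its \emph{topological} degree, and shows that this degree is $\pm 1$ exactly when some stress matrix has rank $v-d-1$ and is $0$ otherwise; the mod-$2$ count of real equivalent frameworks is then read off from that degree. Your reduction is compatible with this, but the actual content---linking ``no max-rank stress'' to ``degree $0$''---requires the specific differential-topological analysis of \cite{GHT10}, not an abstract involution argument.
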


And now we can prove this direction of our Theorem.
\begin{proof}
From Theorem~\ref{thm:comcon}, if
$\Gamma$ 
is not generically globally rigid in $\CC^d$, 
there is no  generic framework in
$C_{\CC^d}(\Gamma)$ that has
a 
complex equilibrium stress matrix of rank $v-d-1$.
Thus 
there can be 
no real valued and generic framework in 
$C_{\CC^d}(\Gamma)$
 with 
complex equilibrium stress matrix of rank $v-d-1$, and thus  
no generic framework in 
$C_{\EE^d}(\Gamma)$ 
with a complex or
real equilibrium stress matrix of rank $v-d-1$.
Thus from Theorem~\ref{thm:ght}, $\Gamma$ is 
generically globally flexible in
$\EE^d$.
\end{proof}

\section{Pseudo Euclidean Generic Global Rigidity: Results}
\label{sec:sudo}

Our main theorem on pseudo Euclidean generic global rigidity is as 
follows:
\begin{theorem}
\label{thm:sudo}
For any pseudo Euclidean space $\SSS^d$, 
a graph $\Gamma$ 
is generically globally rigid in $\EE^d$
iff 
it is generically globally rigid in $\SSS^d$.
\end{theorem}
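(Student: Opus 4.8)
The plan is to run both implications through the complex case, using a single device: the $\CC$\hyp linear change of coordinates $\phi$ on $\CC^d$ that multiplies the first $s$ coordinates by $\sqrt{-1}$. Since $\sum_i (\phi w)_i^2 = -\sum_{i\le s} w_i^2 + \sum_{i>s} w_i^2$, the map $\phi$ carries the metric of $\SSS^d$ to the standard complex metric; it restricts to a bijection from $\SSS^d$ onto the real subspace $(\sqrt{-1}\,\RR)^s\times\RR^{d-s}\subseteq\CC^d$, it conjugates the orthogonal group of $\SSS^d$ to a real form of $O(d,\CC)$, and (because $\phi$ is invertible over $\QQ(\sqrt{-1})$) it takes generic $\SSS^d$\hyp configurations to generic $\CC^d$\hyp configurations and conversely. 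Thus $\phi$ turns an $\SSS^d$\hyp framework into a complex framework with exactly the same edge\hyp length and congruence data, except that congruence of $\SSS^d$\hyp frameworks becomes congruence via the restricted real form rather than via all of $O(d,\CC)$.

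For the implication $\EE^d$-GGR $\Rightarrow$ $\SSS^d$-GGR: by Theorem~\ref{thm:complex}, $\Gamma$ is GGR in $\CC^d$. Let $\rho$ be a generic framework in $C_{\SSS^d}(\Gamma)$; then $\phi\circ\rho$ is a generic framework in $C_{\CC^d}(\Gamma)$. If $\sigma$ is any $\SSS^d$\hyp framework equivalent to $\rho$, then $\phi\circ\sigma$ is $\CC^d$\hyp equivalent to $\phi\circ\rho$, hence congruent to it; since a generic configuration has full $d$\hyp dimensional affine span, congruence and strong congruence coincide, so $\phi\circ\sigma$ is obtained from $\phi\circ\rho$ by a translation and an element of $O(d,\CC)$. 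Conjugating this complex isometry by $\phi^{-1}$ produces a complex affine map that preserves the $\SSS^d$\hyp form and sends the $d+1$ affinely independent real points $\rho(u)$ to the real points $\sigma(u)$; such a map is necessarily defined over $\RR$, hence is an isometry of $\SSS^d$, so $\sigma$ is congruent to $\rho$. Therefore every generic $\rho$ is globally rigid and $\Gamma$ is GGR in $\SSS^d$.

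For the converse $\SSS^d$-GGR $\Rightarrow$ $\EE^d$-GGR I would argue the contrapositive. If $\Gamma$ is not GGR in $\EE^d$ then, by the characterization of Euclidean generic global rigidity via equilibrium stress matrices (Theorem~\ref{thm:ght} together with the Euclidean analogue of Theorem~\ref{thm:comcon}), no generic Euclidean framework carries an equilibrium stress matrix of rank $v-d-1$; since the space of equilibrium stresses of a framework depends only on the affine configuration and not on the metric, the same is true of generic $\SSS^d$\hyp frameworks. The goal is then to produce at least one globally flexible generic $\SSS^d$\hyp framework, i.e.\ a pseudo\hyp Euclidean analogue of Theorem~\ref{thm:ght}. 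The plan is to adapt the Gortler--Healy--Thurston argument, transported to $\CC^d$ by $\phi$: on the complex side the measurement variety is literally the standard one, so the part of their argument relating the rank of the generic complex equilibrium stress matrix to the number of points in the generic complex fibre of the measurement map applies verbatim and shows that fibre has at least two points (one also knows directly, by Theorem~\ref{thm:complex} and Remark~\ref{rem:chevy}, that $\Gamma$ is GGF in $\CC^d$). What remains is to promote this to the statement that over some component of the real $\SSS^d$\hyp measurement locus the fibre already has at least two real points.

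The last step is where I expect the main difficulty. Gortler--Healy--Thurston pass from complex flexibility to real flexibility by a mod\hyp $2$ degree count for the measurement map restricted to its image, and this relies on two Euclidean features: properness (bounded squared edge lengths force a configuration of bounded diameter, by the triangle inequality) and the existence of unrealizable edge\hyp length vectors reachable from generic realizable ones (squared lengths are non\hyp negative and obey Cayley--Menger inequalities), which pins the parity of the fibre count to zero. In indefinite signature squared edge lengths can be negative and need not control the configuration, so both ingredients need substitutes; concretely, one would isolate a subregion of the $\SSS^d$\hyp measurement image over which the map is proper together with a reachable unrealizable target (for instance exploiting that a triangle with like\hyp signed squared edge lengths is still unrealizable in $\SSS^d$ on a bounded interval of its third squared length), so that the relevant mod\hyp $2$ count is again even and hence at least two. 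Carrying this out, and in particular identifying the components of the real measurement locus on which the parity argument is available, is the delicate point; it is also precisely the obstruction that keeps the present methods from showing global rigidity is a generic property in $\SSS^d$ for all graphs rather than only for those containing a sufficiently large GGR subgraph.
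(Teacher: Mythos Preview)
Your forward implication ($\EE^d$\hyp GGR $\Rightarrow$ $\SSS^d$\hyp GGR) is the paper's argument in Section~\ref{sec:sudo1}: embed $\SSS^d$ into $\CC^d$ via your $\phi$, note that genericity is preserved, and invoke complex GGR. You work harder than necessary at the end: in this paper \emph{congruence} means equality of all pairwise squared lengths, not the existence of an ambient isometry, so once $\phi\circ\sigma$ is congruent to $\phi\circ\rho$ in $\CC^d$ the $\SSS^d$\hyp congruence of $\sigma$ and $\rho$ is immediate, with no need to argue that the conjugated isometry is real.

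The reverse implication is where your proposal has a genuine gap. You propose to transport the Gortler--Healy--Thurston mod\hyp $2$ degree argument to $\SSS^d$, you correctly identify the two obstacles (loss of properness; no obvious reachable unrealizable target), and then you do not overcome them---you explicitly leave ``the delicate point'' open. The paper addresses exactly this in Remark~\ref{rem:g3}: ``many of the stress matrix arguments and conclusions from~\cite{GHT10} simply do not carry over to pseudo Euclidean spaces,'' and the Jackson--Owen graph $G_3$ is GGF in $\EE^2$ yet has an \emph{odd} number of equivalent realizations in $2$\hyp dimensional Minkowski space. So the specific mechanism you invoke---the mod\hyp $2$ count forcing an even fibre---can return parity~$1$ in $\SSS^d$ and tells you nothing. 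Your own closing remark, that completing your plan would also settle the (still open) question of whether global rigidity is generic in $\SSS^d$, is a further sign that you are aiming at something strictly stronger than the theorem.

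The paper's route for this direction (Section~\ref{sec:sudo2}) is entirely different and bypasses degree theory. It uses the \emph{Pogorelov map}~$P$ (Definition~\ref{def:pog}), which sends a pair of equivalent $\EE^d$\hyp frameworks to a pair of equivalent $\SSS^d$\hyp frameworks---concretely just a coordinate swap (Remark~\ref{rem:cswap})---and preserves non\hyp congruence (Lemma~\ref{lem:pnoncong}). The entire technical content is showing that $P$ applied to a generic pair yields a \emph{generic} $\SSS^d$\hyp framework. This is done by complexifying the component $E$ of equivalent pairs, factoring the complex Pogorelov map as $P_\CC=H_\CC^{-1}\circ S_\CC\circ H_\CC$ with $H_\CC$ the average/difference transform and $S_\CC$ a coordinate scaling, and proving that $S_\CC$ is an automorphism of $H_\CC(E_\CC)$ (Lemma~\ref{lem:inv}); hence $P_\CC$ is an automorphism of $E_\CC$ and carries generic points to generic points (Corollary~\ref{cor:auto}). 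No properness, no parity, and no unrealizable target are needed.
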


Unfortunately we do not know if 
generic global rigidity is a generic property in 
$\SSS^d$. 
It is conceivable that there are some graphs that are not 
GGR in $\SSS^d$
but that do have \emph{some} generic frameworks that are
globally rigid in $\SSS^d$. 
We leave this as an open question. We do have the following
partial result

\begin{theorem}
\label{thm:sudoGP}
If a graph $\Gamma$ 
is not GGR in 
$\SSS^d$ and it has 
a GGR subgraph $\Gamma_0$ with $d+1$ 
or more vertices, then $\Gamma$ 
must be GGF in $\SSS^d$.
\end{theorem}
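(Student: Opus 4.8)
The plan is to convert global flexibility in $\SSS^d$ into a count of the solutions of a polynomial system with the congruence group already eliminated, to pin down the parity of the number of \emph{complex} solutions using the count in Theorem~\ref{thm:ght}, and to read off the number of genuine $\SSS^d$-solutions by means of an anti-holomorphic involution of $\CC^d$ whose fixed locus is $\SSS^d$.

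\emph{Reductions.} As $\Gamma_0\subseteq\Gamma$ has at least $d+1$ vertices, so does $\Gamma$; if it had exactly $d+1$, then $\Gamma_0$ and $\Gamma$ would share a vertex set with $\Edges(\Gamma)\supseteq\Edges(\Gamma_0)$, forcing $\Gamma$ to be GGR in $\SSS^d$ against our hypothesis, so $v\ge d+2$. By Theorem~\ref{thm:sudo}, $\Gamma$ is also not GGR in $\EE^d$; hence, by the stress-matrix characterization of generic global rigidity in $\EE^d$~\cite{Connelly05:GenericGlobalRigidity,GHT10}, no generic framework in $C_{\EE^d}(\Gamma)$ has a real equilibrium stress matrix of rank $v-d-1$. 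Thus Theorem~\ref{thm:ght} applies, and for every generic $\rho\in C_{\EE^d}(\Gamma)$ the number $N_{\EE}(\rho)$ of congruence classes of frameworks equivalent to $\rho$ in $\EE^d$ is even.

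\emph{The pinned system.} For a generic framework $\rho$ of $\Gamma$ in $\EE^d$, $\SSS^d$, or $\CC^d$, the restriction $\rho|_{\Gamma_0}$ is generic, hence globally rigid (as $\Gamma_0$ is GGR), and since a generic configuration of at least $d+1$ points affinely spans, its congruence coincides with strong congruence, while a strong congruence fixing the affinely spanning set $\rho|_{\Gamma_0}$ pointwise is the identity. Therefore each framework equivalent to $\rho$ has a unique representative $\sigma$ in its congruence class with $\sigma|_{\Gamma_0}=\rho|_{\Gamma_0}$, so $N(\rho)$ is the number of solutions $\sigma$ of the pinned system requiring $\sigma|_{\Gamma_0}=\rho|_{\Gamma_0}$ and $|\sigma(t)-\sigma(u)|^2=|\rho(t)-\rho(u)|^2$ for all $\{t,u\}\in\Edges$; and $\rho$ is globally flexible iff this system has a solution other than $\rho$. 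Complexifying the vertex coordinates, all of these pinned systems are fibers of one and the same $\QQ$-defined incidence family over the complex configuration space $\CC^{dv}$, built from the complex squared-length map; by Chevalley's theorem (cf.\ Remark~\ref{rem:chevy}) its generic complex fiber has a constant cardinality $k$, or constant positive dimension (a case treated at the end). A generic Euclidean framework $\rho$ is a generic point of $\CC^{dv}$, so its complex pinned fiber has $k$ points, of which $N_{\EE}(\rho)$ are real; as that fiber is defined over $\RR$, complex conjugation pairs up the non-real points, so $k\equiv N_{\EE}(\rho)\equiv 0\pmod 2$.

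\emph{Transfer to $\SSS^d$.} Replacing $z_i$ by $i\,z_i$ in the first $s$ coordinates turns the signature-$s$ form into $\sum_i z_i^2$ and produces an isometric embedding $\phi\colon\SSS^d\hookrightarrow\CC^d$ whose image is the fixed locus of the anti-holomorphic involution $\tau(z)=(-\bar z_1,\dots,-\bar z_s,\bar z_{s+1},\dots,\bar z_d)$; one checks that $|\tau(w)|^2=\overline{|w|^2}$, and that $\phi$, being linear over $\QQ(i)$, sends generic $\SSS^d$-configurations to generic $\CC^d$-configurations. For a generic $\rho\in C_{\SSS^d}(\Gamma)$ the point $\phi(\rho)$ is a generic point of $\CC^{dv}$, so the complex pinned fiber over it again has $k$ points; since $\phi(\rho)$ is $\tau$-fixed and the target squared lengths $|\rho(t)-\rho(u)|^2$ are real, $\tau$ acts on this fiber, with fixed points exactly the solutions lying in $\phi(\SSS^d)$ — that is, the genuine $\SSS^d$-solutions of the pinned system, numbering $N_{\SSS}(\rho)$. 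An involution of a $k$-element set with $k$ even has an even number of fixed points, so $N_{\SSS}(\rho)$ is even; as $\rho$ is itself one solution, $N_{\SSS}(\rho)\ge 2$, so $\rho$ is globally flexible in $\SSS^d$. Since $\rho$ was an arbitrary generic framework, $\Gamma$ is GGF in $\SSS^d$. (If instead the generic complex pinned fiber is positive-dimensional, then over $\phi(\rho)$ it is a positive-dimensional $\tau$-invariant variety with smooth $\tau$-fixed point $\phi(\rho)$, hence has a positive-dimensional $\tau$-fixed locus, again forcing $N_{\SSS}(\rho)\ge 2$.) I expect the bookkeeping in this last step to be the main obstacle: confirming that $\phi$ preserves genericity, that the pinned systems coming from $\EE^d$ and from $\SSS^d$ really are fibers of one $\QQ$-family so that the integer $k$ — and with it the parity handed over by Theorem~\ref{thm:ght} via the Euclidean count — is common to both, and that the $\tau$-fixed points of the complex fiber are precisely the $\SSS^d$-solutions.
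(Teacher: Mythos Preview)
Your proof is correct and follows the same high-level parity strategy as the paper: use Theorem~\ref{thm:sudo} and Theorem~\ref{thm:ght} to get an even Euclidean count, transfer this parity to the degree of a complex system, and then read off an even $\SSS^d$ count via an involution whose fixed points are exactly the $\SSS^d$-solutions.

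Where you differ is in how the congruence group is eliminated. The paper passes to the Gram-type matrices $\gB(\rho)\in\GG$ and studies the linear map $\pi_\Edges:\GG\to\CC^e$; real matrices in a fiber are handled via Sylvester's law of inertia, and the role of the GGR subgraph $\Gamma_0$ is to force all real matrices in a fiber to share the same signature (Lemma~\ref{lem:sameSig}), so that they correspond bijectively to $\SSS^d$-congruence classes. You instead use $\Gamma_0$ directly to \emph{pin} a representative in each congruence class, and replace the signature bookkeeping with the anti-holomorphic involution $\tau$ on $\CC^d$ whose fixed locus is the $s$-valued embedding of $\SSS^d$. Your route is lighter on machinery (no Gram matrices, no Sylvester), and it makes the two involutions---ordinary conjugation for $\EE^d$, $\tau$ for $\SSS^d$---visibly parallel. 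The paper's route, on the other hand, makes the signature obstruction explicit and shows more transparently why the GGR subgraph hypothesis is needed (cf.\ Remark~\ref{rem:g3} and the discussion of $G_3$): without it, real $g$-matrices of different signatures can coexist in a fiber and the parity transfer breaks down. The point you flag at the end---that $\phi$ preserves genericity---is exactly the computation the paper carries out in Section~\ref{sec:sudo1}, so you may cite that.
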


\section{=> of Theorem~\ref{thm:sudo}}
\label{sec:sudo1}

This argument is essentially identical to that of Section~\ref{sec:complex1}.

\begin{definition}
Given a pseudo Euclidean space $\SSS^d$ with signature $s$, 
we model 
each configuration $\rho \in C_{\SSS^d}(\Verts)$ 
as a Complex configurations 
$\rho_\CC \in C_{\CC^d}(\Verts)$ 
that happens to have the first $s$ of its coordinates purely imaginary
and the remaining $d-s$ of its coordinates purely real. We call this
an \emph{s-signature, real valued complex configuration}.
We will shorten this to simply an 
\emph{s-valued  configuration}.

It is easy to verify that for such configurations, the complex squared length
measurement coincides with the metric on $\SSS^d$. 
\end{definition}

And now we can prove this direction of our Theorem.
\begin{proof}
Let $\rho$ be a generic framework in 
$C_{\SSS^d}(\Gamma)$. We model this with  $\rho_\CC$,
an
s-valued complex framework  in
$C_{\CC^d}(\Gamma)$. 

$\rho_\CC$ must be a generic framework in $C_{\CC^d}(\Gamma)$. 
For suppose there is a non-zero polynomial $\phi_\CC$
with rational coefficients, that vanishes on 
$\rho_\CC$. Then there is a polynomial $\phi$ with coefficients
in $\QQ(i)$ that vanishes on the real coordinates of $\rho$. 
Let $\bar{\phi}$ be the polynomial
 obtained by taking the conjugate of every coefficient in $\phi$,
and let $\psi := \phi * \bar{\phi}$. Then $\psi$ is non zero
and vanishes on 
$\rho$. Since $\psi$  is fixed by conjugation, it
 has coefficients in $\QQ$. This polynomial
would make $\rho$ non generic, leading to a contradiction.

Since $\Gamma$ is  generically globally rigid in $\EE^d$, from 
Theorem~\ref{thm:complex} it is also generically globally rigid in 
$\CC^d$. Thus $\rho_\CC$
can have no equivalent and non-congruent framework in
  $C_{\CC^d}(\Gamma)$,
and thus it can have  no s-valued,
equivalent and non-congruent  framework in 
 $C_{\CC^d}(\Gamma)$. 
Thus $\rho$ can have no equivalent and non-congruent framework in 
 $C_{\SSS^d}(\Gamma)$.
\end{proof}

\section{<= of Theorem~\ref{thm:sudo}}
\label{sec:sudo2}

\begin{remark}
\label{rem:g3}
For this proof, we cannot apply the same reasoning as 
section~\ref{sec:complex2},
as many of the stress matrix arguments and conclusions
from~\cite{GHT10} simply
do not carry over
to pseudo Euclidean spaces.
Indeed, Jackson and Owen~\cite{owjack}
have found a graph, they call $G_3$, 
that is GGF in $\EE^2$, but for which
there is always an \emph{odd} number of equivalent realizations in 
2-dimensional Minkowski space. Moreover, it is not even clear that
for general pseudo Euclidean spaces of dimension 3 or greater, 
the ``number of
equivalent realizations mod $2$'' is even a generic property.
\end{remark}

For this direction, we will
show the contrapositive: namely, if there is 
a generic Euclidean framework that is not globally rigid, then there
must be a generic framework in $\SSS^d$ that is not globally rigid. 
To do this, we will apply 
a basic construction by Saliola and Whiteley~\cite{waltPC} 
that takes a pair of equivalent
Euclidean frameworks and produces a pair of equivalent 
frameworks in the desired space
 $C_{\SSS^d}(\Gamma)$. 
Whiteley refers to this recipe as a generalized
Pogorelov map~\cite{waltPC}. 

\begin{definition}
\label{def:pog}
Let $P$ 
be the map from pairs of frameworks in 
 $C_{\EE^d}(\Gamma)$
 to pairs
of frameworks in 
 $C_{\SSS^d}(\Gamma)$
 defined as follows:

Step 1: Let $\rho$ and $\sigma$ be two frameworks in 
$\EE^d$. Take their average to obtain $a := \frac{\rho+\sigma}{2}$.
Take their difference to obtain $f := \frac{\rho-\sigma}{2}$.

Step 2: Let $\tilde{a}$ be the framework  in 
 $C_{\SSS^d}(\Gamma)$ 
with the same (real) coordinates of $a$.
Let $\tilde{f}$ be defined by negating the first $s$ of the  coordinates in $f$.

Step 3: Finally, set
$P(\rho,\sigma) := (\tilde{\rho},\tilde{\sigma})$ where
$\tilde{\rho} := \tilde{a} + \tilde{f}$ and 
$\tilde{\sigma} :=  \tilde{a} - \tilde{f}$. 
\end{definition}

The Pogorelov map is useful due to the following~\cite{waltPC}:
\begin{theorem}
\label{thm:pog}
Let $\rho$ and $\sigma$ be two equivalent frameworks in 
$C_{\EE^d}(\Gamma)$. 
Then $P(\rho,\sigma)$ are a pair of 
equivalent frameworks in 
$C_{\SSS^d}(\Gamma)$. 
\end{theorem}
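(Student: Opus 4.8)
The plan is to verify directly that the Pogorelov map preserves equivalence by computing the pseudo-Euclidean squared length of an edge of $P(\rho,\sigma)$ in terms of the Euclidean squared lengths of the corresponding edges of $\rho$ and $\sigma$. Fix an edge $\{t,u\} \in \Edges$ and write, for any framework $\tau$, $\tau_{tu} := \tau(t) - \tau(u)$. With $a = \frac{\rho+\sigma}{2}$ and $f = \frac{\rho-\sigma}{2}$ as in Step~1, we have $\rho_{tu} = a_{tu} + f_{tu}$ and $\sigma_{tu} = a_{tu} - f_{tu}$, so expanding the (Euclidean) squared lengths and adding and subtracting gives the polarization identities
\[
|\rho_{tu}|_{\EE}^2 + |\sigma_{tu}|_{\EE}^2 = 2|a_{tu}|_{\EE}^2 + 2|f_{tu}|_{\EE}^2,
\qquad
|\rho_{tu}|_{\EE}^2 - |\sigma_{tu}|_{\EE}^2 = 4\, a_{tu}\cdot f_{tu},
\]
where $\cdot$ is the standard Euclidean inner product and $|\cdot|_\EE$ the Euclidean norm. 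On the pseudo-Euclidean side, $\tilde\rho_{tu} = \tilde a_{tu} + \tilde f_{tu}$ and $\tilde\sigma_{tu} = \tilde a_{tu} - \tilde f_{tu}$, and by Step~2 the vector $\tilde f$ is obtained from $f$ by flipping the sign of the first $s$ coordinates while $\tilde a$ has the same coordinates as $a$.

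The key observation is how sign-flipping interacts with the indefinite form. Writing $\langle\cdot,\cdot\rangle_s$ for the bilinear form of signature $s$ (so $|\vec w|^2 = \langle \vec w,\vec w\rangle_s$), flipping the first $s$ coordinates of the second argument turns the Euclidean dot product $a_{tu}\cdot f_{tu}$ into $-\langle \tilde a_{tu}, \tilde f_{tu}\rangle_s$: the first $s$ terms acquire a $-$ from the form and a $-$ from the flip (net $+$), matching the Euclidean $+$, while the last $d-s$ terms get a $+$ from the form and no flip, again matching; so in fact $a_{tu}\cdot f_{tu}$ becomes $+\langle \tilde a_{tu}, \tilde f_{tu}\rangle_s$ — I would just track the signs carefully in the writeup. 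Similarly $|a_{tu}|_\EE^2 = |a_{tu}|_{s}^2$ is not generally true, so rather than trying to match $|a|^2$ and $|f|^2$ individually, the clean route is to compute $|\tilde\rho_{tu}|_s^2 - |\tilde\sigma_{tu}|_s^2 = 4\langle \tilde a_{tu}, \tilde f_{tu}\rangle_s$ and observe this equals the Euclidean $|\rho_{tu}|_\EE^2 - |\sigma_{tu}|_\EE^2$, and separately compute $|\tilde\rho_{tu}|_s^2 + |\tilde\sigma_{tu}|_s^2 = 2|\tilde a_{tu}|_s^2 + 2|\tilde f_{tu}|_s^2$; since $\tilde a$ shares coordinates with $a$ and $\tilde f$ differs from $f$ only by signs of coordinates that get squared, we get $|\tilde a_{tu}|_s^2 = |\tilde f_{tu}|_s^2$-type cancellations that reduce everything to the corresponding Euclidean sum. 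Combining the sum and difference then yields $|\tilde\rho_{tu}|_s^2 = |\rho_{tu}|_\EE^2$ and $|\tilde\sigma_{tu}|_s^2 = |\sigma_{tu}|_\EE^2$ for every edge, which is strictly stronger than needed.

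Granting that, the theorem follows immediately: if $\rho$ and $\sigma$ are equivalent in $\EE^d$, then $|\rho_{tu}|_\EE^2 = |\sigma_{tu}|_\EE^2$ for every edge $\{t,u\}$, hence $|\tilde\rho_{tu}|_s^2 = |\rho_{tu}|_\EE^2 = |\sigma_{tu}|_\EE^2 = |\tilde\sigma_{tu}|_s^2$, so $\tilde\rho = \tilde\rho_{P}$ and $\tilde\sigma$ are equivalent in $\SSS^d$. The main obstacle is purely bookkeeping: getting the interaction of the coordinate sign-flip in $\tilde f$ with the indefinite signature exactly right, and organizing the polarization identities so the $|a|^2$ and $|f|^2$ cross-terms cancel cleanly rather than having to be matched space-to-space. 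There is no conceptual difficulty — the identity $|\rho_{tu}-\sigma_{tu}|^2 \cdot(\text{something}) = \dots$ is essentially the classical parallelogram/polarization law, and the Pogorelov trick is exactly the device that converts a sign in the quadratic form into a sign in a coordinate.
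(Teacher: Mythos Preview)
Your direct-computation approach is sound in spirit and is in fact the alternative the paper itself flags in Remark~\ref{rem:cswap} (the ``coordinate swapping'' verification), whereas the paper's official proof routes through the averaging principle, the transfer of infinitesimal flexes to $\SSS^d$, and the flex--antiflex (de-averaging) principle. So your route is genuinely different and more elementary: no rigidity-theoretic machinery, just a bilinear identity.

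However, your sketch overreaches and the overreach is actually false. The difference identity
\[
|\tilde\rho_{tu}|_s^2 - |\tilde\sigma_{tu}|_s^2 \;=\; 4\,\langle \tilde a_{tu},\tilde f_{tu}\rangle_s \;=\; 4\,a_{tu}\cdot f_{tu} \;=\; |\rho_{tu}|_\EE^2 - |\sigma_{tu}|_\EE^2
\]
is correct (the sign-flip in $\tilde f$ exactly cancels the signature signs), and \emph{this identity alone already proves the theorem}: equivalence of $\rho,\sigma$ makes the right side zero, hence the left side is zero. But your claimed sum identity does \emph{not} reduce to the Euclidean sum: one computes $|\tilde a_{tu}|_s^2 + |\tilde f_{tu}|_s^2 = -|a_{tu}^-|^2 + |a_{tu}^+|^2 - |f_{tu}^-|^2 + |f_{tu}^+|^2$, which is not $|a_{tu}|_\EE^2 + |f_{tu}|_\EE^2$. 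Consequently the ``strictly stronger'' assertion $|\tilde\rho_{tu}|_s^2 = |\rho_{tu}|_\EE^2$ is false in general (concretely, with coordinate swapping $\tilde\rho = (\sigma^-,\rho^+)$, so $|\tilde\rho_{tu}|_s^2 = -|\sigma_{tu}^-|^2 + |\rho_{tu}^+|^2$, not $|\rho_{tu}|_\EE^2$). Drop the sum computation and the stronger claim; keep only the difference identity and you have a clean one-line proof.
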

\begin{proof} 
Using the notation of Definition~\ref{def:pog} we see the following.

Step 1: 
From the \emph{averaging principal}~\cite{Connelly91:GenericGlobalRigidity},
$a$ must be infinitesimally flexible
with flex $f$.

Step 2: 
$\tilde{f}$ must be an infinitesimal flex for $\tilde{a}$
in $C_{\SSS^d}(\Gamma)$~\cite{saliola2007some}.

Step 3: 
From the 
\emph{flex-antiflex principal}~\cite{Connelly91:GenericGlobalRigidity} 
(also sometimes called the de-averaging principal),
$\tilde{\rho}$ must be equivalent to $\tilde{\sigma}$ in 
 $C_{\SSS^d}(\Gamma)$.
\end{proof}

\begin{remark}
\label{rem:cswap}
It is, perhaps, interesting to note that in our case,
the map has the very simple form of ``coordinate swapping''.
In particular, it is an easy calculation to see that 
$\tilde{\rho}$ will be made up of
the first $s$ coordinates of $\rho$ and the remaining coordinates of $\sigma$,
while
$\tilde{\sigma}$ will be made up of
the first $s$ coordinates of $\sigma$ and the remaining coordinates of $\rho$.
It is also an simple calculation to directly verify, without using 
the averaging principle, 
 that coordinate swapping will
map pairs of equivalent Euclidean frameworks to pairs of equivalent
frameworks in $C_{\SSS^d}(\Gamma)$.
\end{remark}

Additionally, we can ensure 
that
$\tilde{\rho}$ is not  congruent to $\tilde{\sigma}$.

\begin{lemma}
\label{lem:pnoncong}
Let $\rho$ and $\sigma$ be two equivalent frameworks in 
 $C_{\EE^d}(\Gamma)$. 
And let $(\tilde{\rho},\tilde{\sigma}) := P(\rho,\sigma)$.
Then 
$\rho$ and $\sigma$ are  congruent 
in  $C_{\EE^d}(\Gamma)$
iff 
$\tilde{\rho}$ and $\tilde{\sigma}$ are congruent
in  $C_{\SSS^d}(\Gamma)$. 
\end{lemma}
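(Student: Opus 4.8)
The plan is to exploit the ``coordinate swapping'' description of the Pogorelov map from Remark~\ref{rem:cswap}, since the map is then visibly an involution on the relevant data. Write $\rho = (\rho^{(1)},\dots,\rho^{(d)})$ and $\sigma = (\sigma^{(1)},\dots,\sigma^{(d)})$ in terms of their coordinate functions $\Verts \to \RR$. According to Remark~\ref{rem:cswap}, $\tilde\rho$ has coordinates $(\sigma^{(1)},\dots,\sigma^{(s)},\rho^{(s+1)},\dots,\rho^{(d)})$ and $\tilde\sigma$ has coordinates $(\rho^{(1)},\dots,\rho^{(s)},\sigma^{(s+1)},\dots,\sigma^{(d)})$, where these are now interpreted as coordinates in $\SSS^d$ (so the first $s$ directions carry the negative sign). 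The key structural observation is that applying the analogous coordinate-swap construction to the pair $(\tilde\rho,\tilde\sigma) \in C_{\SSS^d}(\Gamma)$ returns the original pair $(\rho,\sigma)$: swapping the first $s$ coordinates of $\tilde\rho$ and $\tilde\sigma$ undoes the first swap. So it suffices to prove just one direction of the iff and then apply it to the inverse construction.

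First I would prove the forward direction. Suppose $\rho$ and $\sigma$ are congruent in $C_{\EE^d}(\Gamma)$. By the discussion in the Remark following the definition of congruence (and Appendix~\ref{app:cong}), I will want to reduce to the case of \emph{strong} congruence — that is, I may assume $\sigma = A\rho + t$ for some $A \in O(d)$ (Euclidean orthogonal group) and translation $t$, at least when the configurations are in affine general position; the degenerate cases can be handled separately or are not needed for the application (where the frameworks are generic). Decompose $A$ and the translation into their action on the first $s$ coordinates versus the last $d-s$ coordinates. The cleanest case is when $A$ is block-diagonal with respect to the splitting $\RR^s \oplus \RR^{d-s}$: then each block is orthogonal, and an orthogonal map on a coordinate subspace of Euclidean space is also orthogonal for the pseudo-Euclidean form restricted there (the form is just $\pm$ the standard one). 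One then checks that the coordinate swap commutes with such a block-diagonal isometry, so $\tilde\rho$ and $\tilde\sigma$ differ by the corresponding block-diagonal element of the pseudo-orthogonal group $O(s,d-s)$ together with a translation, hence are (strongly) congruent in $\SSS^d$.

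The main obstacle is precisely that a general $A \in O(d)$ need \emph{not} be block-diagonal with respect to the $\RR^s \oplus \RR^{d-s}$ splitting, and for such $A$ the coordinate swap does not obviously conjugate it into $O(s,d-s)$. I expect to handle this by using the freedom in the statement: congruence only constrains pairwise squared distances, and I can post-compose $\sigma$ (equivalently pre- or post-compose by isometries) to arrange a convenient normal form. Concretely, since $\rho,\sigma$ are equivalent \emph{and} congruent, I can replace $\sigma$ by a strongly congruent copy chosen so that the difference framework $f = (\rho-\sigma)/2$ and the average $a = (\rho+\sigma)/2$ sit in a standard position; alternatively, note that two congruent Euclidean frameworks are related by an isometry that I am free to left-multiply by any isometry fixing $\rho$. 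The argument should really be run at the level of the averaging/flex data: congruence of $\rho,\sigma$ means the flex $f$ is a trivial infinitesimal flex of $a$ (an infinitesimal isometry), and trivial flexes are preserved in both directions under the Step-2 sign change, since the space of trivial infinitesimal flexes has the same description (skew-symmetric-with-respect-to-the-form linear part plus translation) and the coordinate negation carries Euclidean-skew to $\SSS^d$-skew. I would then invoke the converse of the averaging principle to conclude that $\tilde\rho,\tilde\sigma$ being related by a trivial infinitesimal flex forces them to be congruent. Having proved forward implication in this generality, the reverse implication follows by applying it to the involutive inverse construction noted above, completing the proof.
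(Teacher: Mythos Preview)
Your approach is substantially more complicated than needed and has a real gap. The step ``congruence of $\rho,\sigma$ means the flex $f$ is a \emph{trivial} infinitesimal flex of $a$'' is false without further hypotheses: the averaging principle applied to the complete graph only tells you $f$ is a flex of $a$ on $K_v$, and such a flex need not be trivial when $a$ has deficient affine span. You flag this (``at least when the configurations are in affine general position''), but the lemma is stated for arbitrary equivalent pairs, so you cannot simply defer the degenerate case. The subsequent ``converse of the averaging principle'' you invoke is also not a named result and would need its own argument.

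The paper's proof avoids all of this with a single observation you are circling but never quite land on: \emph{congruence of two configurations is, by definition, exactly equivalence of the corresponding frameworks of the complete graph $K_v$.} Since the Pogorelov map $P$ is defined purely on configurations and does not depend on the edge set, Theorem~\ref{thm:pog} applied with $\Gamma = K_v$ immediately gives that $P$ carries congruent pairs to congruent pairs. Your involution observation then gives the reverse implication. There is no need to track isometries, block-diagonal forms, or trivial flexes at all; you already have the equivalence-preservation result for every graph, and congruence is just the special case of the complete graph.
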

\begin{proof}
Congruence between configurations is the same as equivalence between
complete graphs over these configurations. Thus this property must
map across the Pogorelov map (which does not depend on the edge set),
and its inverse.
\end{proof}

\subsection{Genericity}

The main (annoyingly) difficult 
technical issue left is to show that this construction can 
create a \emph{generic} framework in 
 $C_{\SSS^d}(\Gamma)$ that is globally flexible.
A priori, it is conceivable that the image of the Pogorelov map, acting
on all pairs of equivalent and non-congruent Euclidean frameworks, 
can only produce pseudo Euclidean configurations that lie on some 
subvariety of $C_{\SSS^d}(\Gamma)$. 
In this section, we rule this possibility out.

In this discussion,
we will assume that $\Gamma$ is generically locally rigid
(otherwise we are done), but that it is not GGR in $\EE^d$.

\begin{definition}
Let $E^+$ ('E' for 'equivalent')
be the algebraic subset of 
 $C_{\EE^d}(\Gamma) \times  C_{\EE^d}(\Gamma)$
consisting of 
pairs of equivalent tuples.
Let $C^+$ ('C' for 'congruent')
be the algebraic subset of 
 $C_{\EE^d}(\Gamma) \times  C_{\EE^d}(\Gamma)$
consisting of 
pairs of congruent tuples.
Let $\pi_1$ 
be the projection from a pair of frameworks
onto its first factor.
\end{definition}

\begin{definition}
\label{def:E}
Since $\Gamma$ is not GGR in $\EE^d$,
$\dim(\pi_1(E^+\backslash C^+))= v*d$  and so $E^+$ must have 
at least one  irreducible component 
$E$, with 
$\dim(\pi_1(E))= v*d$ and such that 
it contains
at least 
one tuple of non-congruent frameworks.
We choose one such component and call it $E$.
As per Remark~\ref{rem:ext}, $E$ must be defined
over some algebraic extension of $\QQ$. Thus if $e$ is generic in $E$, 
then $\pi_1(e)$ is a generic framework in $C_{\EE^d}(\Gamma)$.
\end{definition}

\begin{lemma}
\label{lem:gpnoncong}
Let $e := (\rho,\sigma) 
\in E$ be generic. Then $\rho$ is not congruent to $\sigma$.
\end{lemma}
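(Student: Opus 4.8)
The plan is to exploit the fact that $E$ is an irreducible component of $E^+$ whose projection $\pi_1(E)$ has full dimension $v \cdot d$, together with the fact (granted by hypothesis) that $E$ contains at least one tuple $(\rho_0, \sigma_0)$ of non-congruent frameworks. The congruent pairs form the algebraic set $C^+$, and $E \cap C^+$ is a closed algebraic subset of $E$. Since $E$ is irreducible, either $E \subseteq C^+$ or $E \cap C^+$ is a proper closed subset of $E$, hence of strictly smaller dimension than $E$. The choice of $E$ rules out the first possibility: it was selected precisely because it contains the non-congruent tuple $(\rho_0, \sigma_0) \notin C^+$. Therefore $E \cap C^+$ is a proper subvariety of $E$.

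Now I would invoke genericity. Since $E$ is defined over an algebraic extension of $\QQ$ (Remark~\ref{rem:ext}), and $E \cap C^+$ is a proper closed subset of $E$ cut out by further algebraic conditions (also defined over a field extension of $\QQ$), a generic point $e$ of $E$ cannot lie in the lower-dimensional set $E \cap C^+$: a generic point satisfies no algebraic equation with rational coefficients beyond those defining $E$, and in particular the coordinates of $e$ do not satisfy the extra equations (suitably normed to have rational coefficients, as in the norm argument used for s-valued configurations in Section~\ref{sec:sudo1}) that would place it inside $C^+$. Hence for generic $e = (\rho,\sigma) \in E$ we have $e \notin C^+$, i.e.\ $\rho$ is not congruent to $\sigma$.

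The one point requiring a little care — and the main (minor) obstacle — is the descent-of-coefficients issue: $E$ and $C^+$ are a priori only defined over some number field, not over $\QQ$ itself, so "generic in $E$" must be interpreted correctly and the exclusion of $E \cap C^+$ phrased in terms of the right notion of genericity. This is handled exactly as in the genericity bookkeeping already set up in the excerpt (Definition~\ref{def:E} and Remark~\ref{rem:ext}): a generic point of an irreducible variety defined over a number field avoids every proper subvariety defined over that same field, and the relevant norm trick converts coefficients back to $\QQ$ when needed. Once that is in place, the lemma follows immediately from the dimension count $\dim(E \cap C^+) < \dim(E)$.
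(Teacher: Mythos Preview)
Your argument is correct and is essentially the same as the paper's: the paper simply observes that congruence is cut out by polynomials over $\QQ$, that by the choice of $E$ these polynomials do not vanish identically on $E$, and hence a generic point of $E$ avoids them. Your only superfluous worry is the descent-of-coefficients step for $C^+$: the congruence conditions $|p(t)-p(u)|^2=|q(t)-q(u)|^2$ are already polynomial equations with rational coefficients, so $C^+$ is defined over $\QQ$ from the outset and no norm trick is needed there.
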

\begin{proof}
Congruence is a relation that can be expressed with polynomials over $\QQ$.
By our assumptions on $E$, these polynomials do not vanish identically 
over $E$. 
\end{proof}

\begin{lemma}
\label{lem:dimE}
  The (real) dimension of $E$ is $v*d+\binom{d+1}{2}$.
  Moreover, if $(\rho,\sigma)$ is generic in $E$, then 
for all $\sigma^c$ in $[\sigma]$, the equivalence class of $\sigma$ under
translations and rotations,
$(\rho, \sigma^c)$ must  be in
$E$~\footnote{This lemma and proof contains a correction from the published
version}.
\end{lemma}
\begin{proof}
We will pick a generic $e = (\rho,\sigma) \in E$, and look at the 
dimension of the  fiber  $\pi_1^{-1}(\rho)$ near this point $e$.
(By considering only this neighborhood, we can avoid dealing with
any non-smooth points of $E$, and thus can view this as a 
smooth map between
manifolds).
The dimension of $E$ must be the sum of the dimension of the span
of $\pi_1(E)$, which is $v*d$, 
and the dimension of 
this fiber.

Since $e$ is generic in $E$, 
$\rho$ must be  generic in  $C_{\EE^d}(\Gamma)$.
Thus, from Lemma~\ref{lem:equivgen} (below),
$\sigma$ 
must be locally rigid and with 
non degenerate affine span.
Thus its equivalence class  has
dimension $\binom{d+1}{2}$.

Since $e$ is generic in $E$, from
Lemma~\ref{lem:onlyone}, 
all nearby points in $E^+$ must, in fact,
lie in $E$. In particular, 
for $\sigma^c \in [\sigma]$ and close to
$\sigma$, 
the point
$(\rho, \sigma^c)$ must  be in
$E$.
Thus the dimension of the  fiber in $E$ near $e$ must be $\binom{d+1}{2}$. 
This gives us the desired dimension.

Moreover, 
since $E$ is algebraic, 
for any $\sigma^c \in [\sigma]$,
the point
$(\rho, \sigma^c)$ must  be in
$E$.
This follows from the facts that,
the orbit of $\sigma$ under translations and rotations
is irreducible, 
and
that the
(Zariski) closure of a subset must be a subset of the closure.

\end{proof}

\begin{corollary}
\label{cor:pi2}
Let $\pi_2$ be the projection of a pair onto its second factor. 
  The (real) dimension of $\pi_2(E)$ is $v*d$. And if $e$ is generic
in $E$, $\pi_2(e)$ is generic in $C_{\EE^d}(\Gamma)$.
\end{corollary}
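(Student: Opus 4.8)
The plan is to deduce Corollary~\ref{cor:pi2} directly from Lemma~\ref{lem:dimE} together with the symmetry of the situation. First I would observe that the relation $E^+$ (pairs of equivalent frameworks) is symmetric under the swap $(\rho,\sigma)\mapsto(\sigma,\rho)$, and so is $C^+$; hence the swap map $\tau$ is an algebraic automorphism of $C_{\EE^d}(\Gamma)\times C_{\EE^d}(\Gamma)$ carrying $E^+$ to itself. Applying $\tau$ to the chosen component $E$ produces another irreducible component $\tau(E)$ of $E^+$, which again has a full\hyp dimensional image under $\pi_1$ (since $\pi_1\circ\tau = \pi_2$) and again contains a tuple of non\hyp congruent frameworks. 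So $\tau(E)$ is a legitimate choice for the component in Definition~\ref{def:E}; whether or not $\tau(E)=E$, everything proved about $E$ applies verbatim to $\tau(E)$, and in particular $\dim\pi_2(E)=\dim\pi_1(\tau(E))=v*d$.

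The genericity claim is the part needing a small argument. By Remark~\ref{rem:ext} (invoked in Definition~\ref{def:E}), the component $\tau(E)$ is defined over an algebraic extension of $\QQ$, so a generic point $e'$ of $\tau(E)$ projects under $\pi_1$ to a generic framework in $C_{\EE^d}(\Gamma)$. Pulling this back through $\tau$: if $e=(\rho,\sigma)$ is generic in $E$, then $\tau(e)=(\sigma,\rho)$ is generic in $\tau(E)$ (genericity is preserved by an automorphism defined over $\QQ$), hence $\pi_1(\tau(e))=\sigma=\pi_2(e)$ is generic in $C_{\EE^d}(\Gamma)$. This is exactly the second assertion of the corollary.

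The only subtlety — and the one place I would be careful — is the bookkeeping of which component we call $E$: the statement of the corollary refers to ``$E$'' as the single component fixed in Definition~\ref{def:E}, so I want to be sure the argument does not secretly need $\tau(E)=E$. It does not: the point is that $\pi_2(E)=\pi_1(\tau(E))$ as sets, and $\tau(E)$ satisfies all the hypotheses used to derive the dimension and genericity conclusions for components of $E^+$, so those conclusions transfer. Alternatively, and perhaps more cleanly, one can avoid mentioning $\tau(E)$ altogether: Lemma~\ref{lem:dimE} already tells us that the fiber $\pi_2^{-1}(\sigma)\cap E$ contains the full congruence class $[\sigma]$ (of dimension $\binom{d+1}{2}$) for generic $e$, and since $\dim E = v*d + \binom{d+1}{2}$, the generic fiber of $\pi_2$ has dimension exactly $\binom{d+1}{2}$, forcing $\dim\pi_2(E)=v*d$; combined with $\pi_2$ being dominant onto an image defined over an extension of $\QQ$, this yields genericity of $\pi_2(e)$ as well. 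I expect the main (minor) obstacle to be phrasing this so that the genericity transfer is clearly justified rather than merely asserted.
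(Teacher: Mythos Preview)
Both of your approaches have a gap at the same crucial point.

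In the symmetry argument, to say that $\tau(E)$ is ``a legitimate choice for the component in Definition~\ref{def:E}'' you must verify $\dim\pi_1(\tau(E))=v\cdot d$. But $\pi_1(\tau(E))=\pi_2(E)$, so this \emph{is} the statement to be proved. The equality $\dim\pi_1(E)=v\cdot d$ is a defining hypothesis on $E$ in Definition~\ref{def:E}, not a conclusion of Lemma~\ref{lem:dimE}, so nothing ``proved about $E$'' transfers to $\tau(E)$ until that hypothesis has been checked for $\tau(E)$ --- which is circular.

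In the alternative argument the inequality runs the wrong way. Knowing that the generic $\pi_2$\nobreakdash-fiber has dimension \emph{at least} $\binom{d+1}{2}$ yields only $\dim\pi_2(E)\le v\cdot d$, which is already trivial. (Incidentally, Lemma~\ref{lem:dimE} places $\{\rho\}\times[\sigma]$ in the $\pi_1$\nobreakdash-fiber over $\rho$, not in $\pi_2^{-1}(\sigma)$.) What you need is an \emph{upper} bound on the $\pi_2$\nobreakdash-fiber. That bound is available, and is the argument the paper has in mind: for generic $e=(\rho,\sigma)\in E$ we already know $\rho=\pi_1(e)$ is generic in $C_{\EE^d}(\Gamma)$, hence locally rigid; the set of $\rho'$ near $\rho$ with $(\rho',\sigma)\in E^+$ is therefore the congruence class $[\rho]$, of dimension $\binom{d+1}{2}$, and by Lemma~\ref{lem:onlyone} this coincides with the $\pi_2$\nobreakdash-fiber in $E$ near $e$. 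Then $\dim\pi_2(E)\ge\dim E-\binom{d+1}{2}=v\cdot d$, and the genericity of $\pi_2(e)$ follows from Lemma~\ref{lem:image-generic} as you say. In short, one simply reruns the fiber computation of Lemma~\ref{lem:dimE} for $\pi_2$ instead of $\pi_1$, the asymmetry being that local rigidity of the \emph{first} factor (already known) is what bounds the fiber of the \emph{second} projection.
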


To study the behavior of $P$ on $E$, we move our discussion over
to complex space.

\begin{definition}
Let $E_\CC^+$ 
be the algebraic subset of 
 $C_{\CC^d}(\Gamma) \times  C_{\CC^d}(\Gamma)$
consisting of 
pairs of equivalent tuples.
Let $E_\CC$ 
be any component of 
$E_\CC^+$ that includes $E$.
(This can be done as the complexification of $E$
must be irreducible - see 
Definition ~\ref{def:cxify}).
From 
Corollary~\ref{cor:CE}, below, we will also soon see that there is only
one such component.
\end{definition}

\begin{lemma}
\label{lem:dimCE}
 The (complex) dimension of $E_\CC$ is $v*d+\binom{d+1}{2}$.
\end{lemma}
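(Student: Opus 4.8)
The plan is to deduce the complex dimension of $E_\CC$ from the already-established real dimension of $E$ (Lemma~\ref{lem:dimE}) by using the general principle that the dimension of a complexification equals the dimension of the real variety it complexifies. First I would recall from Definition~\ref{def:cxify} (referenced in the text) that $E_\CC$ contains the complexification $E_{\mathbb C}^{\mathrm{cx}}$ of $E$, and that since $E$ is irreducible and defined over a subfield of $\mathbb R$, this complexification is an irreducible complex algebraic set of the same dimension as the real dimension of $E$, namely $v\cdot d+\binom{d+1}{2}$. So $E_\CC$, being an irreducible component of $E_\CC^+$ containing $E_{\mathbb C}^{\mathrm{cx}}$, has complex dimension at least $v\cdot d+\binom{d+1}{2}$.

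For the reverse inequality I would bound the dimension of $E_\CC$ from above directly. Pick a generic point $e=(\rho,\sigma)\in E_\CC$; since $\pi_1(e)=\rho$ is then generic in $C_{\CC^d}(\Gamma)$ (the projection $\pi_1(E_\CC)$ is a constructible set of complex dimension $v\cdot d$, because $\pi_1(E)$ already has real dimension $v\cdot d$ and the complex projection contains its complexification, while it cannot exceed $v\cdot d = \dim_{\CC} C_{\CC^d}(\Gamma)$), the framework $\rho$ is generically locally rigid in $\CC^d$ by our standing assumption together with Theorem~\ref{thm:complex} and its complex analog. Hence $\sigma$, being equivalent to $\rho$, lies on a $0$-dimensional-up-to-congruence locus: the complex analog of Lemma~\ref{lem:equivgen} shows $\sigma$ is locally rigid with nondegenerate affine span, so the fiber $\pi_1^{-1}(\rho)\cap E_\CC$ is contained in (a finite union of) complex congruence classes of $\sigma$, each of complex dimension $\binom{d+1}{2}$. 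Therefore $\dim_\CC E_\CC \le v\cdot d + \binom{d+1}{2}$, and combined with the lower bound this gives equality. As a byproduct, the fiber is exactly one congruence class near $e$, which — together with irreducibility of $E_{\mathbb C}^{\mathrm{cx}}$ — is what yields Corollary~\ref{cor:CE}, the uniqueness of the component.

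The main obstacle I anticipate is making the ``complexification has the same dimension'' step fully rigorous in the form needed here: one must know not just that $\dim_\CC E_{\mathbb C}^{\mathrm{cx}} = \dim_\RR E$, but also that $E_{\mathbb C}^{\mathrm{cx}}$ is genuinely irreducible (so that it sits inside a single component $E_\CC$), which is exactly where the fact that $E$ is defined over a real field and is itself irreducible gets used — a point the excerpt defers to Definition~\ref{def:cxify} and Remark~\ref{rem:ext}. A secondary subtlety is transporting the local-rigidity and affine-span genericity facts (Lemma~\ref{lem:equivgen}, Lemma~\ref{lem:onlyone}) from the Euclidean to the complex setting; these are the same kinds of algebraic arguments used in Section~\ref{sec:complex2}, and I would invoke the complex analogs in the same spirit as Theorem~\ref{thm:comcon}, rather than reproving them.
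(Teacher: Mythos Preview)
Your proposal is correct and follows essentially the same approach as the paper: both arguments use that $E_\CC$ contains the complexification of $E$ to get $\dim_\CC \pi_1(E_\CC)=v\cdot d$, and then bound the generic $\pi_1$-fiber by $\binom{d+1}{2}$ via the complex analogue of the Lemma~\ref{lem:dimE} argument (local rigidity and nondegenerate span of $\sigma$ from Lemma~\ref{lem:equivgen}). Your explicit lower-bound/upper-bound packaging is slightly more detailed than the paper's terse ``follow the proof of Lemma~\ref{lem:dimE},'' but the content is the same.
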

\begin{proof}
$E_\CC$ includes the complexification of $E$
(see Definition~\ref{def:cxify}), and so by assumption, 
the complex dimension of $\pi_1(E_\CC)$ must be at least $v*d$,
and thus must be equal to $v*d$.
We can then follow the 
proof of Lemma~\ref{lem:dimE} to establish the complex dimension
of the generic $\pi_1$ fibers of $E_\CC$
\end{proof}

\begin{corollary}
\label{cor:CE}
$E_\CC$ is \emph{the} complexification of $E$.
A generic point of $E$ is generic in $E_\CC$. 
\end{corollary}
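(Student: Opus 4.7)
The plan is to identify $E_\CC$ with the complexification of $E$ by a dimension argument, and then transfer genericity from $E$ to $E_\CC$.

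First, let $E^\CC$ denote the complexification of $E$ as in Definition~\ref{def:cxify}. By the definition of complexification, $E \subseteq E^\CC$, and $E^\CC$ is the smallest complex algebraic set containing $E$. Since $E \subseteq E_\CC^+$ and $E_\CC^+$ is itself a complex algebraic set, we have $E^\CC \subseteq E_\CC^+$. Because $E^\CC$ is irreducible, it must lie entirely inside a single irreducible component of $E_\CC^+$, and any component of $E_\CC^+$ that contains $E$ must therefore contain $E^\CC$. In particular, $E^\CC \subseteq E_\CC$.

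Next I would compare dimensions. The complex dimension of $E^\CC$ is equal to the real dimension of $E$, which by Lemma~\ref{lem:dimE} is $v \cdot d + \binom{d+1}{2}$. By Lemma~\ref{lem:dimCE}, the complex dimension of $E_\CC$ is also $v \cdot d + \binom{d+1}{2}$. Since $E_\CC$ is irreducible (being a component) and contains the irreducible complex variety $E^\CC$ of the same dimension, we conclude $E_\CC = E^\CC$. This simultaneously establishes the ``the'' in the statement: any component of $E_\CC^+$ meeting $E$ must contain $E^\CC$ and, by the same dimension count, be equal to it, so there is precisely one.

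For the second assertion, let $e \in E$ be generic in $E$. By the definition of genericity on an algebraic set defined over an algebraic extension of $\QQ$, every polynomial with rational coefficients that vanishes on $e$ must vanish identically on $E$. Since $E_\CC = E^\CC$ is the Zariski closure of $E$ in complex affine space, any such polynomial also vanishes on all of $E_\CC$. Hence no polynomial with rational coefficients vanishes on $e$ without vanishing on all of $E_\CC$, which is exactly the statement that $e$ is generic as a point of $E_\CC$.

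The only step that requires any real care is the equality of real and complex dimensions under complexification; this is a standard fact about irreducible real varieties and their Zariski closures in $\CC^N$, and it is essentially already packaged into the statement of Lemma~\ref{lem:dimCE}, so here it amounts to citing the lemma rather than reproving it. The rest is bookkeeping about Zariski closures and irreducible components.
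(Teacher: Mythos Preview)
Your proof is correct and follows essentially the same approach as the paper's. The paper's proof is extremely terse---it observes that $E_\CC$ is irreducible, contains $E$, and has complex dimension equal to the real dimension of $E$, hence cannot be larger than the complexification---and then cites Definition~\ref{def:cxify} for the genericity transfer; you have simply unpacked these steps in more detail, including the containment $E^\CC \subseteq E_\CC$ via minimality of the complexification and the uniqueness of the component. One very minor point: in your genericity argument you check only polynomials with rational coefficients, whereas $E_\CC$ is defined over a finite algebraic extension $\kk'$ of $\QQ$; but this gap is closed by Lemma~\ref{lem:ext} (cf.\ Remark~\ref{rem:ext}), so the argument stands.
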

\begin{proof}
By assumption, $E_\CC$ is irreducible and contains $E$.
Moreover the complex dimension of $E_\CC$ equals the real dimension 
of $E$. Thus $E_\CC$ cannot be larger than the complexification of $E$.
Genericity carries across complexification
(see Definition~\ref{def:cxify}).
\end{proof}

To study $P$, we 
will look at a complex Pogorelov map  $P_\CC$, that 
essentially reproduces the behavior of $P$ when restricted to real input.
In particular, this
map will take
real valued complex pairs, to s-valued complex pairs.
We define  $P_\CC$
 as the composition of some very simple
maps.

\begin{definition}
Let $H_\CC$, (a Haar like transform) be the
invertible map from 
$(\rho_\CC,\sigma_\CC)$, a pair 
of frameworks in 
$C_{\CC^d}(\Gamma)$,
to the pair
$(\frac{\rho_\CC+\sigma_\CC}{2},
\frac{\rho_\CC-\sigma_\CC}{2})$. 

Let $S_\CC$ be the the invertible map that takes 
$(a_\CC,f_\CC)$,
a pair of frameworks in 
$C_{\CC^d}(\Gamma)$,
to the pair 
$(\tilde{a}_\CC,\tilde{f}_\CC)$,
where the $\tilde{a}_\CC$ is obtained from $a_\CC$ by 
multiplying its first $s$ coordinates by $i$, while
$\tilde{f}_\CC$ is obtained from $f_\CC$ by multiplying its first $s$ 
coordinates by $-i$.

$H_\CC^{-1}(\tilde{a}_\CC,\tilde{f}_\CC)$,
the inverse Haar map,
is simply 
$(\tilde{a}_\CC+\tilde{f}_\CC,\tilde{a}_\CC-\tilde{f}_\CC)$.

Given this, $P_\CC := H_\CC^{-1} \circ S_\CC \circ H_\CC$.

This complex Pogorelov map coincides with the real map described above. 
In particular 
suppose  $\rho$ and $\sigma$ are in  $C_{\EE^d}(\Gamma)$,
and suppose 
$\rho_\CC$ and $\sigma_\CC$ are the 
corresponding real valued frameworks in $C_{\CC^d}(\Gamma)$.
Let  
$(\tilde{\rho},\tilde{\sigma}) := P(\rho,\sigma)$ 
and
$(\tilde{\rho}_\CC,\tilde{\sigma}_\CC) := P_\CC(\rho_\CC,\sigma_\CC)$.
Then 
$\tilde{\rho}_\CC$ and $\tilde{\sigma}_\CC$
are the s-valued complex representations of
$\tilde{\rho}$ and $\tilde{\sigma}$.
\end{definition}

\begin{figure}
  \centering
  \includegraphics[width=4in]{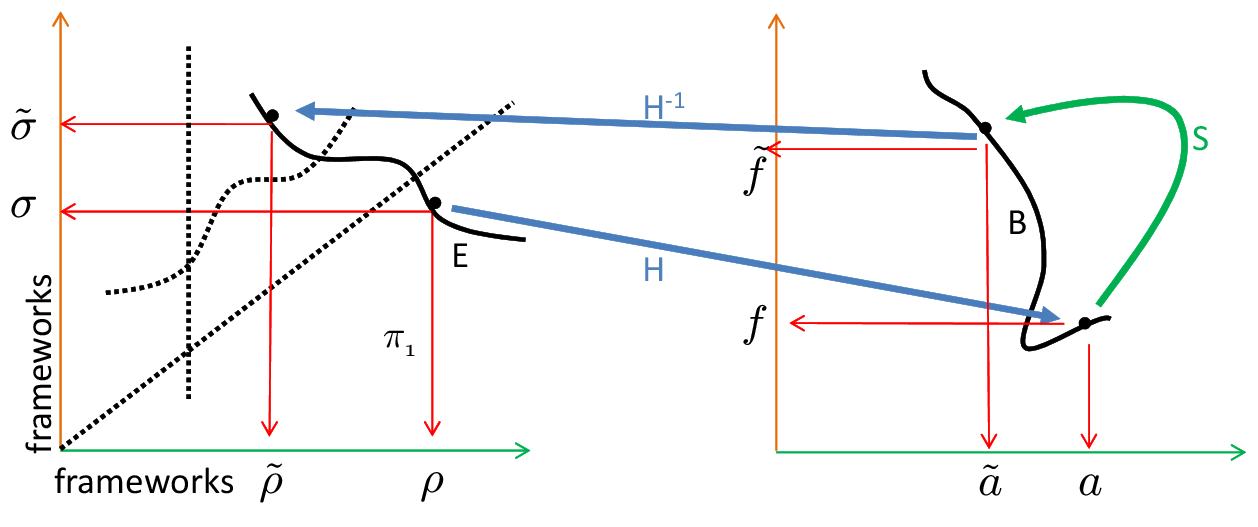}
  \caption{
Left: The space of pairs of complex frameworks. 
(All $\CC$ subscripts are dropped for clarity).
The locus of equivalent pairs, $E_\CC^+$,
is shown in solid and
dotted black. At least one component, $E_\CC$, shown in solid black, 
has the property that 
$\dim(\pi_1(E_\CC))=v*d$.
Right: The space of pairs of complex frameworks.
The variety $B_\CC := H_\CC(E_\CC)$ is made up of some frameworks 
and their flexes. (The image under $H_\CC$ of the other components of 
$E_\CC^+$ is not shown).
The map $S_\CC$ maps $B_\CC$ to itself, and thus
the Pogorelov map is an automorphism of $E_\CC$.
}
  \label{fig:maps}
\end{figure}

Clearly $P_\CC$ 
maps $E_\CC^+$ to itself. But a priori, it might map 
the component $E_\CC$ 
to some other component of $E_\CC^+$,
and this other component might project 
under $\pi_1$ and $\pi_2$
onto a subvariety of (non generic) frameworks
$C_{\CC^d}(\Gamma)$.
Our goal will
be to show that this does not happen; 
instead $E_\CC$ maps to  itself under $P_\CC$.
As this map preservers genericity,
and generic points of $E_\CC$
project under $\pi_1$ to generic frameworks in 
$C_{\CC^d}(\Gamma)$, we will then be done. (See 
Figure~\ref{fig:maps}).

\begin{definition}
Let 
$B_\CC:=(H_\CC(E_\CC))$, ('B' for 'bundles' of flexes over frameworks).
Since $B_\CC$ is isomorphic to $E_\CC$, it too must be an algebraic set.
For any $(a_\CC,f_\CC)\in B_\CC$, from the averaging principle, 
$f_\CC$ is an infinitesimal flex for $a_\CC$.
$B_\CC$ is irreducible
(Lemma~\ref{lem:irrIm}). 
And if $e_\CC$ is generic in $E_\CC$, 
$H_\CC(e_\CC)$
(from Lemma~\ref{lem:image-generic})   must be generic 
in $B_\CC$.
\end{definition}

\begin{lemma}
\label{lem:closeb}
Let 
$b_\CC \in B_\CC$ be generic.
Let $b_\CC' := (a_\CC',f_\CC')$ be a nearby tuple in 
 $C_{\CC^d}(\Gamma) \times  C_{\CC^d}(\Gamma)$
such that $f'_\CC$ is an infinitesimal flex for $a_\CC'$.
Then $b_\CC' \in B_\CC$.
\end{lemma}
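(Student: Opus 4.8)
\textbf{Proof plan for Lemma~\ref{lem:closeb}.}

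The plan is to reduce the statement to the already-established local structure of $E_\CC$ under the map $H_\CC$. The key observation is that $B_\CC = H_\CC(E_\CC)$ where $H_\CC$ is an invertible (indeed linear) map on pairs of frameworks, so $B_\CC$ is an irreducible algebraic set of complex dimension $v*d + \binom{d+1}{2}$, and a generic $b_\CC \in B_\CC$ corresponds via $H_\CC^{-1}$ to a generic $e_\CC \in E_\CC$. First I would transport the question back along $H_\CC^{-1}$: write $b_\CC = (a_\CC, f_\CC)$ with $e_\CC = H_\CC^{-1}(b_\CC) = (a_\CC + f_\CC, a_\CC - f_\CC) = (\rho_\CC, \sigma_\CC)$, and observe that the condition ``$f'_\CC$ is an infinitesimal flex for $a'_\CC$'' on the $B_\CC$ side is, via the averaging principle (and its converse, the de-averaging/flex-antiflex principle), exactly the condition that $\rho'_\CC := a'_\CC + f'_\CC$ and $\sigma'_\CC := a'_\CC - f'_\CC$ are equivalent frameworks, i.e.\ that $(\rho'_\CC, \sigma'_\CC) \in E_\CC^+$. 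So it suffices to show that a tuple in $E_\CC^+$ sufficiently close to a generic point of $E_\CC$ must in fact lie in $E_\CC$.

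That last claim is the heart of the matter, and it is precisely the content of Lemma~\ref{lem:onlyone} (invoked in the proof of Lemma~\ref{lem:dimE}): near a generic point, $E$ — and hence its complexification $E_\CC$, by Corollary~\ref{cor:CE} — is the only component of $E_\CC^+$ passing through, so all nearby points of $E_\CC^+$ lie in $E_\CC$. Concretely, the generic point $e_\CC$ of $E_\CC$ is a smooth point of $E_\CC^+$ lying on no other component, so a whole neighborhood of $e_\CC$ in $E_\CC^+$ is contained in $E_\CC$; pulling this neighborhood back through the local homeomorphism $H_\CC^{-1}$ gives a neighborhood of $b_\CC$ in $B_\CC^+ := H_\CC(E_\CC^+)$ contained in $B_\CC$. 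Since the hypothesis says $b'_\CC$ is close to $b_\CC$ and (after the averaging-principle translation) lies in $B_\CC^+$, we conclude $b'_\CC \in B_\CC$.

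The one point requiring a little care — and what I expect to be the main obstacle — is justifying that $e_\CC = H_\CC^{-1}(b_\CC)$ is genuinely a \emph{generic} point of $E_\CC$, so that Lemma~\ref{lem:onlyone} applies. This follows because $H_\CC$ is an isomorphism of algebraic varieties defined over $\QQ$ (it is linear with rational coefficients), hence carries generic points to generic points in both directions; combined with Corollary~\ref{cor:CE}, which identifies $E_\CC$ as the complexification of $E$ and says genericity carries across complexification, this gives exactly what is needed. A secondary bookkeeping point is that the ``nearby'' in the hypothesis must be interpreted in the analytic topology on $C_{\CC^d}(\Gamma) \times C_{\CC^d}(\Gamma)$, which is the same topology in which Lemma~\ref{lem:onlyone} gives its neighborhood, so the two notions of proximity match up and the pullback through the continuous map $H_\CC$ is legitimate. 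Everything else is the routine identification of the flex condition with membership in $E_\CC^+$ via the averaging and de-averaging principles already cited in the proof of Theorem~\ref{thm:pog}.
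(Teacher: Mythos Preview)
Your proposal is correct and follows essentially the same route as the paper's own proof: pull back along $H_\CC^{-1}$, use the flex/antiflex (de-averaging) principle to land in $E_\CC^+$, and invoke Lemma~\ref{lem:onlyone} at the generic point $e_\CC$ to conclude the nearby point lies in $E_\CC$. The extra care you take in justifying that genericity transfers through the $\QQ$-linear isomorphism $H_\CC^{-1}$ and that the topologies match is a welcome clarification, but the argument is the same.
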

\begin{proof}
The tuple, 
$e_\CC := H_\CC^{-1}(b_\CC)$, is generic in $E$.
From the flex/antiflex principal,
$(\rho_\CC',\sigma_\CC') := e_\CC' := H_\CC^{-1}(a_\CC',f_\CC')$ 
must be an equivalent
pair of frameworks and thus in $E_\CC^+$, and 
$e_\CC'$ must be near $e_\CC$.
From Lemma~\ref{lem:onlyone},
all nearby points in $E_\CC^+$ must, in fact,
lie in $E_\CC$. 
Thus $e_\CC'$ must be in $E_\CC$, and from our definitions, 
$H_\CC(e_\CC') = b_\CC'$ must  be in $B_\CC$.
\end{proof}

\begin{definition}
Let $(a_\CC,f_\CC)=b_\CC$ be a pair of framework in $C_{\CC^d}(\Gamma)$.
One can apply \emph{coordinate scaling} to $b_\CC$ by multiplying 
one chosen coordinate (out of the $d$ coordinates in $\CC^d$)
of all the vertices in $a_\CC$ by 
some complex scalar $\lambda$ and the corresponding coordinate
in all the vertices in 
$f_\CC$ by $1/\lambda$.

\end{definition}
\begin{lemma}
\label{lem:inv}
The set $B_\CC$ is invariant to 
coordinate scaling. 
\end{lemma}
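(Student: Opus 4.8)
The plan is to show that $B_\CC$ is invariant under coordinate scaling by exhibiting, for generic $b_\CC \in B_\CC$, a one-parameter family of scaled copies that all lie in $B_\CC$, and then invoking algebraicity to extend this to all of $B_\CC$ and all scalars $\lambda \in \CC^*$. First I would fix a generic $b_\CC = (a_\CC, f_\CC) \in B_\CC$ and a chosen coordinate index, say the $j$-th coordinate of $\CC^d$. Let $b_\CC(\lambda)$ denote the result of multiplying the $j$-th coordinate of every vertex of $a_\CC$ by $\lambda$ and the $j$-th coordinate of every vertex of $f_\CC$ by $1/\lambda$. The key computational observation is that if $f_\CC$ is an infinitesimal flex of $a_\CC$ — i.e. $\langle a_\CC(t) - a_\CC(u),\, f_\CC(t) - f_\CC(u)\rangle = 0$ for all edges $\{t,u\}$, where $\langle\cdot,\cdot\rangle$ is the complex bilinear form $\sum_k w_k w'_k$ — then the scaled pair $b_\CC(\lambda)$ again has this property: each term $( a_\CC(t) - a_\CC(u))_k (f_\CC(t) - f_\CC(u))_k$ is multiplied by $\lambda \cdot (1/\lambda) = 1$ when $k = j$ and is unchanged when $k \neq j$, so the whole inner product is preserved edgewise. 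Thus $a_\CC(\lambda)$ is still a framework for which $f_\CC(\lambda)$ is an infinitesimal flex.

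Next I would argue that $b_\CC(\lambda) \in B_\CC$ for $\lambda$ near $1$. Since $b_\CC$ is generic in $B_\CC$ and the map $\lambda \mapsto b_\CC(\lambda)$ is continuous with $b_\CC(1) = b_\CC$, for $\lambda$ sufficiently close to $1$ the pair $b_\CC(\lambda)$ is a nearby tuple in $C_{\CC^d}(\Gamma) \times C_{\CC^d}(\Gamma)$ whose second component is an infinitesimal flex of its first. By Lemma~\ref{lem:closeb}, every such nearby tuple lies in $B_\CC$. Hence $b_\CC(\lambda) \in B_\CC$ for all $\lambda$ in a punctured neighborhood of $1$ in $\CC^*$.

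Finally I would upgrade this local statement to the full claim using irreducibility and algebraicity. Consider the morphism $\mu\colon \CC^* \times B_\CC \to C_{\CC^d}(\Gamma) \times C_{\CC^d}(\Gamma)$ sending $(\lambda, b_\CC)$ to the $\lambda$-scaling of $b_\CC$; this is a polynomial map (in $\lambda$ and $1/\lambda$, hence regular on $\CC^* \times B_\CC$), and $\CC^* \times B_\CC$ is irreducible since $B_\CC$ is. Therefore the image $\mu(\CC^* \times B_\CC)$ is an irreducible constructible set, and its Zariski closure $\overline{\mu(\CC^* \times B_\CC)}$ is an irreducible variety. The computation above shows $\mu$ lands inside the variety of pairs $(a_\CC', f_\CC')$ with $f_\CC'$ an infinitesimal flex of $a_\CC'$; more to the point, the previous paragraph shows that $\mu$ restricted to a neighborhood of $\{1\} \times \{\text{generic locus of } B_\CC\}$ has image inside $B_\CC$, which has dimension equal to $\dim B_\CC = \dim(\CC^* \times B_\CC) - 1$... so I must be slightly more careful: instead I note that $\mu(\{1\} \times B_\CC) = B_\CC$, and by the local argument $\mu$ maps a neighborhood of $(1, b_\CC)$ into $B_\CC$ for generic $b_\CC$; since $B_\CC$ is closed and the set of $(\lambda, b_\CC)$ with $\mu(\lambda, b_\CC) \in B_\CC$ is closed and contains a nonempty open subset of the irreducible variety $\CC^* \times B_\CC$, it is all of $\CC^* \times B_\CC$. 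Thus $b_\CC(\lambda) \in B_\CC$ for every $\lambda \in \CC^*$ and every $b_\CC \in B_\CC$, which is precisely coordinate-scaling invariance.

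The main obstacle is the last step: passing from "$b_\CC(\lambda) \in B_\CC$ for $\lambda$ near $1$ and $b_\CC$ generic" to the global statement. One must take care that "the locus where $\mu$ maps into $B_\CC$" is genuinely Zariski-closed (it is the preimage under a morphism of the closed set $B_\CC$) and contains a set that is dense in the irreducible variety $\CC^* \times B_\CC$ — the local neighborhood from Lemma~\ref{lem:closeb} provides a Euclidean-open, hence Zariski-dense, subset — so the closed locus must be everything. Everything else is the short edgewise bilinear-form computation and a direct application of Lemma~\ref{lem:closeb}.
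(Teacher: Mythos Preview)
Your proof is correct and follows essentially the same approach as the paper: verify that coordinate scaling preserves the infinitesimal-flex relation, apply Lemma~\ref{lem:closeb} at a generic $b_\CC$ for $\lambda$ near $1$, then use that $B_\CC$ is an irreducible algebraic set to extend from the local statement to all $\lambda$ and all of $B_\CC$. Your treatment of the last step (via the morphism $\mu$ on $\CC^*\times B_\CC$ and the observation that $\mu^{-1}(B_\CC)$ is closed and contains a Euclidean-open, hence Zariski-dense, set) is more explicit than the paper's one-line Zariski-closure remark, but the argument is the same.
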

\begin{proof}

Let  $(a_\CC,f_\CC) = b_\CC \in B_\CC$ be generic. $f_\CC$ 
is an infinitesimal  flex for $a_\CC$.
Let us  apply coordinate scaling to $b_\CC$ with a 
scalar
$\lambda$ close to $1$ and let us denote the result by 
$b_\CC' = (a_\CC',f_\CC')$.
Looking at the effect of the rigidity matrix, we see that 
$f_\CC'$ must be an infinitesimal flex
for $a_\CC'$, and from Lemma~\ref{lem:closeb} must be in $B_\CC$.

This means that $B_\CC$ is invariant to nearly-unit 
coordinate scaling. 
Since $B_\CC$ is algebraic, it must thus be invariant
to all coordinate scaling.  (This follows from the fact that the
(Zariski) closure of a subset must be a subset of the closure).
\end{proof}

\begin{corollary}
\label{cor:auto}
$S_\CC$ is an automorphism of $B_\CC$.  
Thus $P_\CC$ is an automorphism of $E_\CC$.
Thus if $e_\CC \in E_\CC$ is generic, 
then $P_\CC(e_\CC)$ is generic in $E_\CC$ and both
$\pi_1(P_\CC(e_\CC))$ and
$\pi_2(P_\CC(e_\CC))$ 
are generic
in $C_{\CC^d}(\Gamma)$.
\end{corollary}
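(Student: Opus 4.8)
The plan is to assemble the corollary from the pieces already in hand. First I would observe that $S_\CC$ is exactly coordinate scaling applied $s$ times in succession: scaling the $j$th coordinate by $\lambda = i$ (for $j = 1, \dots, s$) multiplies the $j$th coordinate of $a_\CC$ by $i$ and the $j$th coordinate of $f_\CC$ by $1/i = -i$, which is precisely the definition of $S_\CC$. By Lemma~\ref{lem:inv}, $B_\CC$ is invariant under each single coordinate scaling, hence under the composite; and since each coordinate scaling is an invertible polynomial map with polynomial inverse (scale by $1/\lambda$), $S_\CC$ restricts to an automorphism of $B_\CC$ as a variety. Composing with the isomorphism $H_\CC$ and its inverse $H_\CC^{-1}$, we get that $P_\CC = H_\CC^{-1} \circ S_\CC \circ H_\CC$ is an automorphism of $E_\CC = H_\CC^{-1}(B_\CC)$.

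Next I would deduce the genericity claims. An automorphism of an irreducible variety defined over an algebraic extension of $\QQ$ carries generic points to generic points: if $e_\CC$ is generic in $E_\CC$ and $P_\CC(e_\CC)$ satisfied some nontrivial polynomial relation over $\QQ$ (or over the relevant extension), then pulling back along the polynomial map $P_\CC$ would give a nontrivial relation satisfied by $e_\CC$, contradicting genericity. One has to be mildly careful that $P_\CC$ and $H_\CC$ are defined over $\QQ$ (they are — the only constants appearing are $\tfrac12$ and $\pm i$), so genericity is preserved in the sense used throughout the paper; this is the sort of bookkeeping handled by the appendix material on genericity and field extensions referenced in Definition~\ref{def:E} and Definition~\ref{def:cxify}.

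Finally, since $P_\CC(e_\CC)$ is generic in $E_\CC$ and (by construction of $E_\CC$ and Lemma~\ref{lem:dimCE}, together with the fact that $\dim \pi_1(E_\CC) = v*d$) the projections $\pi_1, \pi_2 \colon E_\CC \to C_{\CC^d}(\Gamma)$ are dominant onto their images of dimension $v*d = \dim C_{\CC^d}(\Gamma)$, the image of a generic point under $\pi_1$ or $\pi_2$ is generic in $C_{\CC^d}(\Gamma)$ — this is the same reasoning already invoked in Definition~\ref{def:E} (``if $e$ is generic in $E$, then $\pi_1(e)$ is a generic framework''), now applied with $\pi_2$ via Corollary~\ref{cor:pi2} as well.

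The only real content here is the identification of $S_\CC$ with an iterated coordinate scaling, so that Lemma~\ref{lem:inv} applies; everything after that is formal. I do not anticipate a genuine obstacle, but the one point worth stating carefully is the defined-over-$\QQ$ condition that makes ``$P_\CC$ preserves genericity'' legitimate rather than merely plausible.
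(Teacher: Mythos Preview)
Your proposal is correct and is exactly the argument the paper has in mind; indeed, the paper gives no separate proof of Corollary~\ref{cor:auto}, treating it as an immediate consequence of Lemma~\ref{lem:inv} together with the definition $P_\CC = H_\CC^{-1}\circ S_\CC\circ H_\CC$ and the standard facts about genericity from the appendix (Lemma~\ref{lem:image-generic}, Corollary~\ref{cor:pi2}). One small wording fix: $S_\CC$ (and hence $P_\CC$) is defined over $\QQ(i)$, not $\QQ$, but as you already note, the appendix material on finite extensions (Remark~\ref{rem:ext}, Lemma~\ref{lem:ext}) is precisely what absorbs this.
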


With this we can 
finish the proof 
of this direction
of Theorem~\ref{thm:sudo}.

\begin{proof}
Assume that $\Gamma$ is not GGR in $\EE^d$.
Pick a generic $(\rho,\sigma) \in E$
(Definition~\ref{def:E}).

From Theorem~\ref{thm:pog}, $P(\rho,\sigma)=:(\tilde{\rho},\tilde{\sigma})$ 
is a pair of equivalent frameworks $C_{\SSS^d}(\Gamma)$
which are not congruent from Lemma~\ref{lem:gpnoncong}.

Let $\rho_\CC$ and $\sigma_\CC$ be the real valued complex frameworks
corresponding to $\rho$ and $\sigma$.
From Corollary~\ref{cor:CE}, 
$(\rho_\CC,\sigma_\CC)$ 
is generic in $E_\CC$.
Meanwhile,
$P_\CC(\rho_\CC,\sigma_\CC)= (\tilde{\rho}_\CC,\tilde{\sigma}_\CC)$,
where $\tilde{\rho}_\CC$ is the s-valued, complex representation of 
$\tilde{\rho}$,
and 
$\tilde{\sigma}_\CC$ is the s-valued, complex representation of 
$\tilde{\sigma}$.
From Corollary~\ref{cor:auto}, 
$\tilde{\rho}_\CC$ is generic in $C_{\CC^d}(\Gamma)$.
Therefore $\tilde{\rho}$ must be  generic
in $C_{\SSS^d}(\Gamma)$, and we can conclude 
that $\Gamma$ is not GGR in $\SSS^d$.

\end{proof}

\section{Proof of Theorem~\ref{thm:sudoGP}}

We will prove the theorem by first showing that 
the existence of a large enough GGR subgraph $\Gamma_0$ is sufficient
to rule out any ``cross-talk'' between different real signatures.
In particular, if we have an s-valued framework of $\Gamma_0$, 
then $\Gamma_0$  cannot have a
congruent  framework that is s'-valued where $s\neq s'$.
Thus, if we have an s-valued framework of $\Gamma$, 
then $\Gamma$  cannot have an
equivalent framework that is s'-valued where $s\neq s'$.
With  such cross talk ruled out, we will be able to apply an algebraic
degree argument to show that $\Gamma$ is GGF in $\SSS^d$.

In this section we will model congruence classes of frameworks 
in 
$C_{\CC^d}(\Verts)$ 
using complex symmetric matrices of rank $d$ or less. 
First we spell out some basic facts about these matrices,
and their relationship to configurations, as well
as the notions of congruence and equivalence.

\begin{definition}
\label{def:gram}
Let $\GG$ be the set of symmetric $v-1$ by $v-1$ complex matrices
of rank $d$ or less. This is a determinantal variety 
which is 
irreducible.
Assuming that $v \geq d+1$,
$\GG$ is  of complex dimension $v*d-\binom{d+1}{2}$,
and any generic $\MB \in \GG$ will have rank $d$.

For any configuration
$p \in C_{\CC^d}(\Verts)$ 
(or framework  $\rho \in C_{\CC^d}(\Gamma)$ )
we associate its \emph{g-matrix} $\gB(p) \in \GG$ 
as follows.
We first translate $p$ so its first vertex is at the origin.
For any two remaining vertices $t,u$, we define the 
corresponding matrix entry as
\begin{eqnarray}
\gB(p)_{t,u} := \sum_{i=1}^d p(t)_i \, p(u)_i
\end{eqnarray}
(This is like a Gram matrix, but there is no
conjugation involved). 
Overloading this notation, if $\rho$ is a framework with
configuration $p$, we define $\gB(\rho) := \gB(p)$.
\end{definition}

\begin{definition}
For any pair $\{t,u\}$,
of distinct vertices in $p$, there is a linear map $\pi_{t,u}$
that computes the squared lengths between that pair
using the entries in $\gB(p)$. 
In the case where 
 $t$ is the first vertex (that was mapped to the origin), we have
\begin{eqnarray}
\label{eq:orig}
\pi_{t,u}(\gB(p)) = \gB(p)_{u,u}
\end{eqnarray}
Otherwise, and in general,
\begin{eqnarray}
\label{eq:rest}
\pi_{t,u}(\gB(p)) = \gB(p)_{t,t} + \gB(p)_{u,u} -2 \gB(p)_{t,u}
\end{eqnarray}
Applying this to all pairs of distinct vertices 
induces a linear map $\pi_{K}$ 
from the set $\GG$ to the set of 
symmetric $v$ by $v$ complex matrices with zeros on the diagonal.
\end{definition}

\begin{lemma}
\label{lem:inj}
The map $\pi_K$ is injective.
\end{lemma}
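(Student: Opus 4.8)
The plan is to recover the $\gB$-matrix from the output $\pi_K(\MB)$ by undoing the linear formulas \eqref{eq:orig} and \eqref{eq:rest}. Concretely, suppose $\MB \in \GG$ is such that $\pi_K(\MB) = 0$; I want to conclude $\MB = 0$. Write the entries of $\MB$ as $\MB_{t,u}$ indexed by the vertices $2, \dots, v$ (thinking of vertex $1$ as the one sent to the origin). Applying \eqref{eq:orig} with $t$ the first vertex, the vanishing of $\pi_K(\MB)$ forces $\MB_{u,u} = 0$ for every $u$. Then, feeding this back into \eqref{eq:rest} for a general pair $\{t,u\}$ of non-origin vertices, $0 = \pi_{t,u}(\MB) = \MB_{t,t} + \MB_{u,u} - 2\MB_{t,u} = -2\MB_{t,u}$, so every off-diagonal entry vanishes as well. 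Hence $\MB = 0$, and since $\pi_K$ is linear, injectivity follows.

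The key steps, in order, are: (i) note $\pi_K$ is linear, so it suffices to show its kernel is trivial; (ii) use the "origin" equations \eqref{eq:orig} to kill all diagonal entries of a kernel element; (iii) use the "general" equations \eqref{eq:rest}, now with the diagonal entries known to be zero, to kill all off-diagonal entries; (iv) conclude. Note that step (iii) only needs the already-derived fact $\MB_{t,t} = \MB_{u,u} = 0$, so there is no circularity.

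I do not expect a genuine obstacle here — the statement is essentially bookkeeping, amounting to the observation that the linear system expressing "squared edge lengths in terms of $\gB$-entries" for the complete graph is triangular once one singles out the origin vertex. The only thing to be a little careful about is the indexing convention: $\gB(p)$ is a $(v-1)\times(v-1)$ matrix indexed by the non-origin vertices, while $\pi_K$ lands in $v \times v$ matrices, and the role of the origin vertex is asymmetric (handled by \eqref{eq:orig} rather than \eqref{eq:rest}). Keeping that bookkeeping straight is the entire content of the argument.
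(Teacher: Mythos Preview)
Your proposal is correct and follows essentially the same approach as the paper: reduce to showing the kernel is trivial, use \eqref{eq:orig} to kill the diagonal entries, then \eqref{eq:rest} to kill the off-diagonal ones. One small quibble: you take $\MB \in \GG$, but $\GG$ is not a vector space (the difference of two rank-$\leq d$ matrices need not have rank $\leq d$), so the ``trivial kernel $\Rightarrow$ injective'' step really uses that the formulas for $\pi_{t,u}$ define a linear map on \emph{all} symmetric $(v-1)\times(v-1)$ matrices, and your computation applies verbatim there---this is implicit in the paper's proof as well and is not a genuine gap.
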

\begin{proof}
We just need to show that the kernel of $\pi_K$ is $0$.
Let $\MB$ be a matrix in the kernel of $\pi_K$.
Starting with the first vertex at the origin, we find from
Equation (\ref{eq:orig}) that all of the diagonal entries, 
$\MB_{u,u}$ must vanish. Then, from Equation (\ref{eq:rest}), all the
off diagonal entries of $\MB$ must vanish as well.
\end{proof}

\begin{lemma}
\label{lem:gclass}
$p$ is congruent to $q$  iff
$\pi_K(\gB(p))=\pi_K(\gB(q))$  and  iff
$\gB(p)=\gB(q)$.
\end{lemma}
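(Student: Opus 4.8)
The plan is to prove the chain of equivalences in Lemma~\ref{lem:gclass} by moving around a small cycle: first $p \cong q \Rightarrow \pi_K(\gB(p)) = \pi_K(\gB(q))$, then $\pi_K(\gB(p)) = \pi_K(\gB(q)) \Rightarrow \gB(p) = \gB(q)$ (immediate from Lemma~\ref{lem:inj}), and finally $\gB(p) = \gB(q) \Rightarrow p \cong q$. The only directions with content are the first and the last, and both reduce to understanding what $\gB$ records.

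For $p \cong q \Rightarrow \pi_K(\gB(p)) = \pi_K(\gB(q))$: by definition of $\pi_K$ via Equations~(\ref{eq:orig}) and~(\ref{eq:rest}), the matrix $\pi_K(\gB(p))$ is exactly the $v \times v$ matrix whose $(t,u)$ entry is the squared length $|p(t) - p(u)|^2$ (one checks this by expanding $|p(t)-p(u)|^2 = \sum_i (p(t)_i - p(u)_i)^2$ and using that the first vertex is translated to the origin). Since congruence means precisely that all these pairwise squared lengths agree for $p$ and $q$, the two matrices coincide. Note this step needs a word of care because $\gB$ depends on a choice of ``first vertex'' and a translation putting it at the origin, but squared lengths are translation-invariant, so $\pi_K(\gB(p))$ genuinely depends only on the congruence class, not the chosen normalization.

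For $\gB(p) = \gB(q) \Rightarrow p \cong q$: after the initial translation, write $p$ (resp. $q$) as the $(v-1) \times d$ matrix $A$ (resp. $B$) whose rows are the images of the non-first vertices. Then $\gB(p) = A A^{\mathsf T}$ and $\gB(q) = B B^{\mathsf T}$, so the hypothesis says $A A^{\mathsf T} = B B^{\mathsf T}$. I want to conclude that $B = A Q$ for some complex orthogonal $Q$, which would exhibit $q$ as strongly congruent (hence congruent) to $p$. Over $\CC$ without conjugation this is the statement that two matrices with the same ``Gram-like'' product differ by a right multiplication by an element of the complex orthogonal group; it is true but slightly more delicate than the real positive-definite case because of isotropic rows. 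The cleanest route is: the hypothesis already forces $\pi_K(\gB(p)) = \pi_K(\gB(q))$, i.e. all pairwise squared lengths of $p$ and $q$ agree, which is literally the definition of congruence --- so in fact $\gB(p) = \gB(q)$ gives $p \cong q$ for free via Equations~(\ref{eq:orig})--(\ref{eq:rest}), with no need to produce an orthogonal transformation at all.

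So the real structure of the argument is: (congruence) $\Leftrightarrow$ (equal squared-length matrix) $=$ (equal $\pi_K \circ \gB$) $\Leftrightarrow$ (equal $\gB$), where the middle equality is just rewriting $\pi_K(\gB(\cdot))$ as the squared-length matrix and the last $\Leftrightarrow$ is Lemma~\ref{lem:inj}. The main obstacle is purely bookkeeping: making sure the translation normalizing the first vertex to the origin is handled consistently so that $\gB$ and $\pi_K \circ \gB$ are well-defined on configurations (they are, since translating all of $p$ does not change $\gB(p)$ once we re-normalize, and does not change squared lengths), and correctly verifying by direct expansion that $\pi_K(\gB(p))_{t,u} = |p(t)-p(u)|^2$ in both the ``$t$ is the origin vertex'' case and the general case. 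I do not anticipate any genuine difficulty beyond this.
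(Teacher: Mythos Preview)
Your proposal is correct and follows essentially the same approach as the paper: the paper's proof simply notes that the first equivalence is the definition of congruence (since $\pi_K(\gB(p))$ records exactly the pairwise squared lengths) and the second is Lemma~\ref{lem:inj}. Your write-up is more detailed and includes the helpful observation that one need not produce an explicit orthogonal transformation for the direction $\gB(p)=\gB(q)\Rightarrow p\cong q$, but the underlying argument is the same two-step structure.
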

\begin{proof}
The first relation follows from the definition of congruence. 
The second follows from Lemma~\ref{lem:inj}.
\end{proof}

\begin{corollary}
\label{cor:inj}
The map $\gB$ acting 
on the quotient $C_{\CC^d}(\Verts)/{\rm congruence}$
is injective.
\end{corollary}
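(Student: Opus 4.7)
The plan is to deduce this corollary directly from Lemma~\ref{lem:gclass}, which is essentially the corollary in disguise: the biconditional ``$p$ is congruent to $q$ iff $\gB(p)=\gB(q)$'' already contains both pieces of data we need. First I would observe that the forward direction of Lemma~\ref{lem:gclass} guarantees that $\gB$ is constant on each congruence class, so it descends to a well-defined map $\bar{\gB}$ on the quotient $C_{\CC^d}(\Verts)/{\rm congruence}$.

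Next I would check injectivity of $\bar{\gB}$ by the standard argument for a quotient map: given two classes $[p]$ and $[q]$ with $\bar{\gB}([p]) = \bar{\gB}([q])$, pick representatives to obtain $\gB(p) = \gB(q)$, then invoke the reverse direction of Lemma~\ref{lem:gclass} to conclude that $p$ and $q$ are congruent, hence $[p] = [q]$. There is no substantive obstacle; the only ``content'' of the corollary is packaging the two implications of Lemma~\ref{lem:gclass} as (i) well-definedness of the quotient map and (ii) its injectivity. I would keep the proof to a single sentence referencing Lemma~\ref{lem:gclass}.
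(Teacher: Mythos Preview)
Your proposal is correct and matches the paper's approach: the corollary is stated immediately after Lemma~\ref{lem:gclass} with no separate proof, precisely because the biconditional there yields both well-definedness and injectivity of the induced map on congruence classes. Your one-sentence packaging is exactly what the paper intends.
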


\begin{lemma}
\label{lem:gclose}
$\GG$ 
is 
the Zariski closure of 
$\gB(C_{\CC^d}(\Verts))$.
Moreover, 
if $p$ is generic in $C_{\CC^d}(\Verts)$, then 
$\gB(p)$ is generic in $\GG$.
\end{lemma}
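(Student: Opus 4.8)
The plan is to prove the two assertions separately, deriving the genericity statement as a consequence of the closure statement together with the fact that $\gB$ is a polynomial map defined over $\QQ$. For the closure statement, I would argue both inclusions. One inclusion is immediate: every matrix $\gB(p)$ has rank at most $d$ because it factors as $\gB(p) = X^{t} X$ where $X$ is the $d \times (v-1)$ matrix whose columns are the (translated) coordinate vectors $p(u)$ for $u$ ranging over the non-basepoint vertices; hence $\gB(C_{\CC^d}(\Verts)) \subseteq \GG$, and since $\GG$ is Zariski closed, the closure of the image is contained in $\GG$. For the reverse inclusion, since $\GG$ is irreducible (as stated in Definition~\ref{def:gram}), it suffices to exhibit a single point of $\GG$ that lies in the image of $\gB$ and has the full dimension $v*d - \binom{d+1}{2}$ of $\GG$ in the sense that the image $\gB(C_{\CC^d}(\Verts))$ has dimension equal to $\dim \GG$; an irreducible variety cannot be the closure of a lower-dimensional subset, so once the image has the right dimension its closure is all of $\GG$. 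Concretely, every rank-$\le d$ symmetric complex matrix $\MB$ admits a factorization $\MB = X^t X$ with $X$ of size $d \times (v-1)$ (over $\CC$, where no signature obstruction arises because the form $\sum z_i^2$ is the unique nondegenerate symmetric bilinear form up to change of basis), so in fact $\gB$ maps \emph{onto} $\GG$, which gives the closure statement in the strong form "image $= \GG$."

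The key steps, in order, are: (1) observe the factorization $\gB(p) = X_p^t X_p$ to get $\gB(C_{\CC^d}(\Verts)) \subseteq \GG$; (2) conversely, given $\MB \in \GG$ of rank $r \le d$, diagonalize the symmetric form to write $\MB = Y^t Y$ with $Y$ of size $r \times (v-1)$, pad $Y$ with $d - r$ zero rows to get $X$ of size $d \times (v-1)$, and let $p$ be the configuration with basepoint at the origin and remaining vertices given by the columns of $X$, so that $\gB(p) = \MB$; (3) conclude $\gB(C_{\CC^d}(\Verts)) = \GG$, whose Zariski closure is $\GG$ itself. (4) For the genericity claim: $\gB$ is a polynomial map with coefficients in $\QQ$ (indeed with integer coefficients, from the formula in Definition~\ref{def:gram}), and it is dominant onto the irreducible variety $\GG$ which is also defined over $\QQ$; hence the image of a $\QQ$-generic point of $C_{\CC^d}(\Verts)$ is a $\QQ$-generic point of $\GG$. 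This last implication is a standard fact about dominant maps of irreducible varieties over $\QQ$ — if $\gB(p)$ satisfied a nontrivial polynomial relation over $\QQ$, then $p$ would satisfy the pullback relation, which is nontrivial precisely because $\gB$ is dominant — and I would cite Appendix~\ref{app:alg} for it.

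The main obstacle, such as it is, is step (2): verifying that an arbitrary symmetric complex matrix of rank $r$ factors as $Y^t Y$ with $Y \in \CC^{r \times (v-1)}$. Over $\CC$ this is true and essentially elementary (every nondegenerate symmetric bilinear form over $\CC$ is equivalent to the standard one $\sum x_i y_i$, by taking square roots of the diagonal entries after diagonalization), but it is worth stating carefully because the analogous statement fails over $\RR$ — and indeed this is exactly why the lemma is phrased over $\CC^d$ rather than over $\SSS^d$, and why the signature obstruction discussed in the earlier remarks is invisible here. Everything else is routine: the inclusion in step (1) is a one-line matrix identity, and the genericity transfer in step (4) is the standard behavior of $\QQ$-rational dominant maps already invoked elsewhere in the paper (e.g.\ in Definition~\ref{def:E} and Corollary~\ref{cor:CE}).
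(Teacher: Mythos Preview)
Your proposal is correct, and the argument is sound throughout. The surjectivity step (2) is valid over $\CC$ for exactly the reason you give: any complex symmetric matrix of rank $r$ is congruent to the standard form $I_r \oplus 0$, hence factors as $Y^t Y$ with $Y$ of size $r \times (v-1)$, and padding gives the required $d \times (v-1)$ matrix.

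Your route differs from the paper's. The paper does not prove that $\gB$ is surjective onto $\GG$; instead it invokes Corollary~\ref{cor:inj} (injectivity of $\gB$ on congruence classes) and counts dimensions: the congruence quotient of $C_{\CC^d}(\Verts)$ has dimension $v\cdot d - \binom{d+1}{2} = \dim \GG$, so the image of $\gB$ has full dimension in $\GG$, and irreducibility of $\GG$ then forces the Zariski closure to be all of $\GG$. Your approach proves more (actual surjectivity rather than mere dominance) and is self-contained, not relying on the earlier injectivity corollary; the paper's approach is shorter and reuses machinery already in place, but leaves the reader to fill in the dimension count. Either way the genericity statement follows, as you note, from Lemma~\ref{lem:image-generic} applied to the dominant map $\gB$.
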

\begin{proof}
Using Corollary~\ref{cor:inj},
a dimension count verifies that the image
$\gB(C_{\CC^d}(\Verts))$ 
must hit an open neighborhood of $\GG$ (ie. a subset of full dimension).
The results follow as $\GG$ is irreducible.
\end{proof}

Equivalence of frameworks can be defined through their
g-matrices as well:
\begin{definition}
Let $\pi_\Edges$ be 
the linear mapping from $\GG$ to $\CC^e$
defined by applying $\pi_{t,u}$ to each of the 
edges in $\Edges(\Gamma)$. 

$\rho$ is equivalent to $\sigma$, iff
$\pi_\Edges(\gB(\rho))=\pi_\Edges(\gB(\sigma))$.

If $\rho$ is generic in $C_{\CC^d}(\Gamma)$, then
(assuming $v \geq d+1$) 
$\pi_\Edges(\gB(\rho))$ is generic in $\pi_\Edges(\GG)$.

\end{definition}

The following Lemma will be useful when examining the cardinality
of a fiber of $\pi_\Edges$.
\begin{lemma}
\label{lem:conj}
Let $\MB$ be any matrix in $\GG$.
If $\pi_\Edges(\MB)$ is real valued, 
there must be an even number of non real matrices in 
$\pi_\Edges^{-1}(\pi_\Edges(\MB))$.
\end{lemma}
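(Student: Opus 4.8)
The plan is to show that complex conjugation induces an involution on the fiber $\pi_\Edges^{-1}(\pi_\Edges(\MB))$ whose only fixed points are real matrices, and then count. First I would observe that the map $\pi_\Edges$ is a linear map defined over $\QQ$ (its coefficients come from Equations (\ref{eq:orig}) and (\ref{eq:rest}), all with rational entries), and that $\GG$ is the zero set of polynomials with rational (indeed integer) coefficients, namely the $(d+1)\times(d+1)$ minors of a symmetric matrix. Consequently, complex conjugation, applied entrywise to a symmetric $(v-1)\times(v-1)$ matrix, maps $\GG$ to itself: if $\NB\in\GG$ then $\bar\NB\in\GG$, since conjugating a vanishing minor gives the conjugate scalar, still zero. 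Likewise $\pi_\Edges(\bar\NB)=\overline{\pi_\Edges(\NB)}$ because $\pi_\Edges$ has real coefficients.

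Next I would use the hypothesis that $\pi_\Edges(\MB)$ is real valued. For any $\NB\in\pi_\Edges^{-1}(\pi_\Edges(\MB))$, we have $\pi_\Edges(\bar\NB)=\overline{\pi_\Edges(\NB)}=\overline{\pi_\Edges(\MB)}=\pi_\Edges(\MB)$, so $\bar\NB$ again lies in the same fiber. Thus conjugation $\NB\mapsto\bar\NB$ is a well-defined involution of the fiber $\pi_\Edges^{-1}(\pi_\Edges(\MB))$. This involution is a bijection of the fiber to itself (it is its own inverse), and its set of fixed points is exactly $\{\NB : \bar\NB=\NB\}$, i.e. the real matrices in the fiber. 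Therefore the non-real matrices in the fiber are partitioned into two-element orbits $\{\NB,\bar\NB\}$ (with $\NB\neq\bar\NB$ precisely when $\NB$ is not real), which shows their number is even.

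The only subtlety I anticipate, and the step I would be most careful about, is the claim that $\GG$ and $\pi_\Edges$ are genuinely defined over $\RR$ (in fact $\QQ$), so that entrywise conjugation really does preserve both the variety and the map; this is where the ``symmetric matrices of rank $\le d$'' description and the explicit rational formulas for $\pi_{t,u}$ do the work, and it is essentially immediate once stated. A second minor point is to note that $\pi_\Edges^{-1}(\pi_\Edges(\MB))$, though a priori an affine-linear slice of $\GG$ of possibly positive dimension, can still be split by this involution into fixed points and free $\ZZ/2$-orbits; the parity statement is about the count of elements, so this only carries content when the fiber is finite, which is the situation in which the lemma will be applied (a generic fiber of $\pi_\Edges$ restricted to the relevant locus). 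In any case the involution argument gives the parity claim whenever the fiber is finite, so I would simply phrase the proof as: conjugation is a fixed-point-free involution on the non-real part of the fiber, hence that part has even cardinality.

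\begin{proof}
The variety $\GG$ is cut out inside the space of symmetric $(v-1)\times(v-1)$ complex matrices by the vanishing of all $(d+1)\times(d+1)$ minors, which are polynomials with integer coefficients; hence entrywise complex conjugation maps $\GG$ to itself. Similarly, the linear map $\pi_\Edges$ is given, via Equations~(\ref{eq:orig}) and~(\ref{eq:rest}), by a matrix with rational entries, so $\pi_\Edges(\bar\NB) = \overline{\pi_\Edges(\NB)}$ for every $\NB \in \GG$.

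Now suppose $\pi_\Edges(\MB)$ is real valued, and let $\NB \in \pi_\Edges^{-1}(\pi_\Edges(\MB))$. Then
\begin{equation*}
\pi_\Edges(\bar\NB) = \overline{\pi_\Edges(\NB)} = \overline{\pi_\Edges(\MB)} = \pi_\Edges(\MB),
\end{equation*}
so $\bar\NB$ lies in the same fiber. Thus $\NB \mapsto \bar\NB$ is an involution of the set $\pi_\Edges^{-1}(\pi_\Edges(\MB))$. Its fixed points are exactly the matrices $\NB$ with $\bar\NB = \NB$, i.e.\ the real matrices in the fiber. Hence the non-real matrices in the fiber are partitioned into two-element orbits $\{\NB, \bar\NB\}$, and therefore their number is even.
\end{proof}
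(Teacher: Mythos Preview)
Your proof is correct and follows exactly the same idea as the paper's: complex conjugation, being defined over $\RR$, acts as an involution on the fiber whose fixed points are the real matrices, so the non-real matrices pair off. Your version is simply more detailed, spelling out why both $\GG$ and $\pi_\Edges$ are defined over $\QQ$ and noting the implicit finiteness assumption.
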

\begin{proof}
$\pi_\Edges$ is defined over $\RR$ and thus 
if $\MB_0$ is in $\pi_\Edges^{-1}(\pi_\Edges(\MB))$,  
so must its complex conjugate $\overline{\MB}_0$. 
If such an $\MB_0$ is not real,
then it is not equal to its conjugate.
\end{proof}

The following lemma is useful above in the proof of 
Lemma~\ref{lem:dimE}.

\begin{lemma}
\label{lem:equivgen}
Let $\Gamma$ be  generically locally rigid (in $\CC^d$).
Let $\rho$ be generic in 
$C_{\CC^d}(\Gamma)$. 
Let $\sigma$ be
equivalent to $\rho$. Then $\sigma$ is infinitesimally rigid. 
\end{lemma}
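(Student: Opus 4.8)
\textbf{Proof proposal for Lemma~\ref{lem:equivgen}.}

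The plan is to argue by a dimension count on the fibers of the edge-measurement map, using the g-matrix machinery just developed. Let $m_\Gamma := \pi_\Edges \circ \gB$ be the composite map from $C_{\CC^d}(\Gamma)$ to $\CC^e$ that records the squared edge lengths. Saying $\Gamma$ is generically locally rigid in $\CC^d$ means that for generic $\rho$, the framework $\rho$ is isolated (up to the finite-dimensional congruence action) among frameworks with the same edge lengths; equivalently, the generic fiber of $m_\Gamma$ has dimension exactly $\binom{d+1}{2}$ (the dimension of a congruence class of a configuration with nondegenerate affine span — here I should note that since $\rho$ is generic, its image lies in the locus where congruence and strong congruence coincide, so the congruence class really is a $\binom{d+1}{2}$-dimensional orbit). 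Factoring through $\GG$, local rigidity at generic $\rho$ says that the generic fiber of $\pi_\Edges : \GG \to \CC^e$ over $\pi_\Edges(\gB(\rho))$ is $0$-dimensional, i.e.\ finite, since $\gB$ quotients out exactly the congruence directions (Corollary~\ref{cor:inj}) and $\gB(\rho)$ is generic in $\GG$ (Lemma~\ref{lem:gclose}).

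Now I would run the standard upper-semicontinuity-of-fiber-dimension argument. Since $\rho$ is generic in $C_{\CC^d}(\Gamma)$, the point $M := \gB(\rho)$ is generic in the irreducible variety $\GG$, and $y := \pi_\Edges(M)$ is generic in the image $\pi_\Edges(\GG)$. By the theorem on dimension of fibers of a morphism of irreducible varieties, the fiber $\pi_\Edges^{-1}(y)$ has every component of dimension exactly $\dim \GG - \dim \pi_\Edges(\GG)$, and by generic local rigidity this common fiber dimension is $0$; moreover the generic fiber is the minimum-dimensional fiber, so \emph{every} fiber of $\pi_\Edges$ has dimension $\geq 0$ with equality generically. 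The key point: because $M$ is generic in $\GG$, the fiber through $M$ is a generic fiber, hence is $0$-dimensional (finite). Since $\sigma$ is equivalent to $\rho$, we have $\gB(\sigma) \in \pi_\Edges^{-1}(y)$ — let me call it $N := \gB(\sigma)$. So $N$ lies in a finite fiber of $\pi_\Edges$.

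From here I want to conclude that $\sigma$ is infinitesimally rigid. The bridge is: infinitesimal rigidity of $\sigma$ is governed by the rank of the differential of the edge-length map at $\sigma$, equivalently (after quotienting by the congruence directions, which are always in the kernel when $\sigma$ has nondegenerate affine span) by the differential $d\pi_\Edges$ at $N = \gB(\sigma)$ restricted to the tangent space of $\GG$ — actually I should be a little careful and work with the full edge map on configurations, but the content is the same. The point is that $N$ lies on a $0$-dimensional component of its fiber, so the fiber is finite near $N$, which forces $\dim T_N(\pi_\Edges^{-1}(y)) = 0$ at the relevant (possibly singular) point — and here is where I need to be careful: a priori a point on a finite fiber could still be a singular point of $\GG$ where the tangent space to the fiber jumps up. To rule this out I would invoke a genericity statement: since $(\rho,\sigma)$ — or rather the pair $(M,N)$ — can be taken generic among pairs with $\pi_\Edges(M)=\pi_\Edges(N)$ (this is exactly the kind of fact used in Definition~\ref{def:E} and Lemma~\ref{lem:dimE}), $N$ is a generic point of some irreducible component of the ``equivalent-to-a-generic-framework'' locus, and in particular $N$ is a smooth point of $\GG$ and $\sigma$ has nondegenerate affine span. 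Then smoothness gives $\dim T_N(\text{fiber}) = \dim_N(\text{fiber}) = 0$, which is precisely the statement that the rigidity matrix of $\sigma$ has full rank, i.e.\ $\sigma$ is infinitesimally rigid.

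\textbf{Main obstacle.} The routine part is the fiber-dimension count. The delicate part is the last step: promoting ``$\sigma$ lies on a $0$-dimensional fiber'' to ``$\sigma$ is infinitesimally rigid,'' which requires knowing $\gB(\sigma)$ is a \emph{smooth} point of $\GG$ (so tangent-space dimension equals local dimension) and that $\sigma$ has nondegenerate affine span (so the congruence orbit contributes the full $\binom{d+1}{2}$ to the kernel and nothing is lost or gained in passing between the configuration picture and the g-matrix picture). Both of these should follow from choosing the pair generically in the appropriate incidence variety and from the fact, noted in the remark on congruence, that generic configurations avoid the isotropic pathologies; I expect the bookkeeping to interface cleanly with Lemma~\ref{lem:dimE}, which is the place this lemma is actually invoked.
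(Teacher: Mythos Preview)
Your overall strategy --- factor through $\gB$, use generic local rigidity to see that the fiber of $\pi_\Edges$ over the generic measurement $y = \pi_\Edges(\gB(\rho))$ is finite, and then upgrade finiteness of the fiber to infinitesimal rigidity of every $\sigma$ landing in it --- is reasonable and can in fact be completed. But the completion you propose for the ``main obstacle'' does not work as written.

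The lemma asserts infinitesimal rigidity for \emph{every} $\sigma$ equivalent to the given generic $\rho$, not merely for a generically chosen one; your phrase ``$(M,N)$ can be taken generic among pairs'' either weakens the statement or, if you mean to route through the incidence variety $E$ of Definition~\ref{def:E}, is circular: Lemma~\ref{lem:dimE} and Corollary~\ref{cor:pi2} are \emph{consumers} of Lemma~\ref{lem:equivgen}, not tools available for proving it (knowing that $\pi_2(E)$ is full-dimensional --- which is what you would need in order to say $N$ is generic in $\GG$ --- is precisely what this lemma feeds into). The correct way to finish your line is different: each $N$ in the finite fiber $\pi_\Edges^{-1}(y)$ is algebraic over $\QQ(y)$, so its transcendence degree over $\QQ$ equals that of $y$, namely $\dim \pi_\Edges(\GG) = \dim \GG$; hence $N$ is itself generic in $\GG$, hence smooth of rank exactly $d$, and $\sigma$ automatically has full affine span. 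Generic smoothness in characteristic zero then makes the fiber over $y$ reduced, which gives injectivity of $d\pi_\Edges|_{T_N\GG}$ and thus infinitesimal rigidity. You never articulate this; the fix you actually offer does not close the gap.

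The paper bypasses the obstacle with a more direct and elementary argument: it bounds the dimension of the \emph{image} under the measurement map of the two bad loci. Configurations with degenerate affine span map into the rank-$\le (d{-}1)$ stratum of $\GG$, whose image under $\pi_\Edges$ has dimension at most $v(d{-}1) - \binom{d}{2}$; infinitesimally flexible configurations with full span form a subset of $C_{\CC^d}(\Verts)$ of dimension at most $vd - 1$, and congruence-invariance of the measurement map drops the image dimension by a further $\binom{d+1}{2}$. Both images are therefore strictly lower-dimensional than $\pi_\Edges(\GG)$, so the generic measurement $y$ lies in neither, and no $\sigma$ with $\pi_\Edges(\gB(\sigma)) = y$ can be infinitesimally flexible. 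No smoothness of $N$, no reducedness of fibers, no incidence variety.
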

\begin{proof}
If  $\Gamma$ has less than  $d+2$ vertices and is generically locally rigid,
it must be a simplex, and we are done.

From 
Corollary~\ref{cor:inj}
and
Lemma~\ref{lem:gclose}, the set of congruence classes of configurations
has  dimension 
 $\dim(\GG)$, which is 
$v*d-\binom{d+1}{2}$.
Due to local rigidity, its measurement set, $\pi_\Edges(\GG)$,
has the same dimension.

Similarly, 
the set of frameworks with a degenerate affine span
must map to g-matrices with rank no greater than $d-1$,
and thus their measurement set must 
have dimension at most $v*(d-1) -\binom{d}{2}$.
Thus such degenerate measurements are non generic in 
$\pi_\Edges(\GG)$.

Meanwhile, the set of infinitesimally flexible frameworks
with non-degenerate span, is non generic in $C_{\CC^d}(\Verts)$, 
and so 
has dimension no larger than $v*d-1$.
Its measurement set has dimension no larger than
$v*d-1 -\binom{d+1}{2}$.
Thus the infinitesimally flexible measurements are non generic in the 
measurement set.

Thus a generic $\rho$ cannot map under the edge squared-length map
to any  measurement arising from an infinitesimally flexible
framework.

\end{proof}

-----

A real valued matrix in $\GG$ corresponds with an s-valued configuration.
At the heart of this correspondence is  Sylvester's law of inertia.
\begin{law}
\label{law:slaw}
Suppose $\MB$ is a real valued symmetric matrix
of size $v-1$ and 
rank $d$.
Suppose that $\MB = \BB^t \DB \BB$,
where $\BB$ is a real non-singular matrix, and 
where $\DB$ is a real diagonal matrix with
$s$ negative diagonal entries, $d-s$ positive diagonal entries,
and $v-1-d$ zero diagonal entries. Let us call the triple
$(s,d-s,v-1-d)$ 
the \emph{signature} of $\DB$.

Then $\MB$ cannot be written as
$\MB = \BB'^t \DB' \BB'$,  where $\BB'$ is real non-singular
and $\DB'$ is real diagonal with  a different signature.
Thus we can call $(s,d-s,v-1-d)$ 
the signature of $\MB$.

Since every real symmetric matrix has an orthogonal 
eigen-decomposition, it must have a signature.
\end{law}

\begin{lemma}
\label{lem:sig1}
Suppose some $\MB  \in \GG$ has all real entries
and has signature
$(s,d'-s,v-1-d')$ for some $s$ and $d'$ (with $d' \leq d$).
There exists an
s-valued configuration
$p$ with an affine span of dimension $d'$ and with
$\gB(p) = \MB$. 
\end{lemma}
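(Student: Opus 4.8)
The statement asks us to realize a given real symmetric matrix $\MB \in \GG$ of signature $(s, d'-s, v-1-d')$ as the g-matrix of an $s$-valued configuration with affine span exactly $d'$. The natural approach is to run the classical ``Gram matrix to configuration'' construction, but carefully tracking signs so that the output lands in the $s$-valued complex model of $\SSS^d$. First I would apply Sylvester's law (Law~\ref{law:slaw}, via the orthogonal eigendecomposition) to write $\MB = \BB^t \DB \BB$ with $\BB$ real non-singular and $\DB$ real diagonal of signature $(s, d'-s, v-1-d')$. Discard the $v-1-d'$ zero rows/columns of $\DB$ so that we get $\MB = \CC^t \DB_0 \CC$ where $\DB_0$ is $d' \times d'$ diagonal with $s$ negative and $d'-s$ positive entries and $\CC$ is a real $d' \times (v-1)$ matrix of rank $d'$.

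\textbf{Building the configuration.} Write $\DB_0 = \Lambda^t \Lambda$ where $\Lambda$ is the diagonal matrix whose first $s$ entries are $\sqrt{-1}\cdot\sqrt{|(\DB_0)_{jj}|} = i\sqrt{|(\DB_0)_{jj}|}$ and whose remaining $d'-s$ entries are $\sqrt{(\DB_0)_{jj}}$; thus $\Lambda$ is a complex diagonal matrix whose first $s$ entries are purely imaginary and whose last $d'-s$ entries are purely real. Set $Q := \Lambda \CC$, a $d' \times (v-1)$ complex matrix. Then the columns of $Q$, together with a column of zeros prepended (for the first vertex, which we placed at the origin), define a configuration $p$ in $\CC^{d'} \subseteq \CC^d$: vertex $1$ goes to the origin, and vertex $u$ goes to the $u$-th column of $Q$ (viewed in $\CC^d$ by padding with zeros in the last $d - d'$ coordinates). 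By construction $\gB(p)_{t,u} = \sum_{i=1}^{d'} p(t)_i p(u)_i = (Q^t Q)_{t,u} = (\CC^t \Lambda^t \Lambda \CC)_{t,u} = (\CC^t \DB_0 \CC)_{t,u} = \MB_{t,u}$, so $\gB(p) = \MB$.

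\textbf{Checking the two remaining properties.} We need $p$ to be $s$-valued and to have affine span of dimension exactly $d'$. For the first: since $\CC$ is a real matrix, row $j$ of $Q = \Lambda \CC$ is a real multiple of $\Lambda_{jj}$, hence purely imaginary for $j \le s$ and purely real for $s < j \le d'$; padding with real zeros in the remaining coordinates, $p$ has its first $s$ coordinates purely imaginary and the rest purely real, i.e. it is $s$-valued. For the second: the affine span of $p$ (with vertex $1$ at the origin) is the complex span of the columns of $Q$; since $\CC$ has rank $d'$ and $\Lambda$ is invertible (all diagonal entries nonzero), $Q$ has rank $d'$, so the span is all of $\CC^{d'} \times \{0\}$, which has dimension $d'$.

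\textbf{Expected obstacle.} There is no deep obstacle here; the lemma is essentially bookkeeping around Sylvester's law. The one point that needs a little care is the bookkeeping of signs: one must verify that it is exactly the first $s$ coordinates (matching the signature convention $|\vec w|^2 = -\sum_{i=1}^s \vec w_i^2 + \sum_{i=s+1}^d \vec w_i^2$, equivalently the convention that an $s$-valued configuration has its first $s$ coordinates imaginary) that pick up the factor of $i$, and that the negative entries of $\DB_0$ are the ones assigned to those coordinates. Since Sylvester's law lets us permute the eigenvalues to put the $s$ negative ones first, this is automatic. A second minor point is making sure $d' \le d$ so that the padding into $\CC^d$ makes sense, which is given in the hypothesis. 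Everything else is a direct computation with $\gB$ as in Definition~\ref{def:gram}.
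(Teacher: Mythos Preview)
Your proof is correct and is essentially the same argument as the paper's: diagonalize via Sylvester, strip the zero block, absorb the square roots of the eigenvalues (with a factor of $i$ on the negative ones) into the rows, and read off the columns as an $s$-valued configuration. The only cosmetic difference is that you package the scaling and the $i$-factor into a single diagonal matrix $\Lambda$ rather than doing those two steps separately; one small caution is that your choice of the symbol $\CC$ for the truncated matrix collides with the paper's macro $\CC=\mathbb C$, so you would want to rename it.
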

\begin{proof}
By assumption $\MB=\BB^t \DB \BB$
where $\DB$ has signature
$(s,d'-s,v-1-d')$. Wlog, let us assume that 
the entries in $\DB$ appear in an order that matches the
signature.
Let us drop the last $v-1-d'$ rows of $\BB$.
Let us 
divide the $j$th row of $\BB$ by 
$\sqrt{|\DB_{j,j}|}$ to obtain an $d'$ by $v-1$ matrix $\PB'$.
Then we can write 
$\MB= \PB'^t \SB \PB'$, 
where $\SB$ is an $d'$ by $d'$ diagonal ``signature'' 
matrix with its first $s$ diagonal
entries of $-1$ and remaining $d'-s$ diagonal entries of $1$. 
Since $\BB$ is non-singular, $\PB'$ has rank $d'$.

Multiplying the first $s$ rows of $\PB'$
by $\sqrt{1}$, we can write $\MB= \PB^t  \PB$.
The columns of $\PB$ 
(along with
the origin) then give us the complex coordinates of an
s-valued  configuration 
$p \in C_{\CC^d}(\Verts)$ with $\gB(p)=\MB$.

\end{proof}

\begin{remark}
When $d'<d$, this does not rule out the possibility of other
frameworks with a different dimensional affine span, and different
real metric signature. When $d'=d$, Corollary~\ref{cor:sig3} 
(below) will
in fact rule out any other signatures and span dimensions.
\end{remark}

\begin{lemma}
\label{lem:sig0}
Let $p \in C_{\CC^d}(\Verts)$
be an s-valued configuration, 
then $\gB(p)$ is real.
If $p$ has an affine span of dimension $d' \leq d$, then 
$\gB(p)$ has rank no more than $d'$.
Moreover, if 
$p$ has an affine span of 
dimension $d$, then $\gB(p)$ has signature 
$(s,d-s,v-1-d)$.
\end{lemma}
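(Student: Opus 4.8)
The plan is to verify the three claims in order, each following from direct computation with the definition of $\gB$ together with Sylvester's law (Law~\ref{law:slaw}).

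First, I would write an $s$-valued configuration $p$ concretely: after translating so the first vertex is at the origin, collect the coordinates of the remaining $v-1$ vertices into a $d \times (v-1)$ complex matrix $\PB$ whose rows are split into a first block of $s$ rows (each purely imaginary, say $i$ times a real row) and a remaining block of $d-s$ rows (purely real). Then $\gB(p) = \PB^t \PB$ by Definition~\ref{def:gram}. Factoring out the $i$'s from the first block, write $\PB = \DB^{1/2}_{\text{sgn}} \cdot \RB$ where $\RB$ is a real $d \times (v-1)$ matrix and $\DB_{\text{sgn}}$ is the $d \times d$ diagonal matrix with first $s$ entries $-1$ and the rest $+1$ (so $(i)^2 = -1$ produces the sign). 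Hence $\gB(p) = \RB^t \DB_{\text{sgn}} \RB$, which is manifestly real, giving the first claim. For the rank claim: $\rank(\gB(p)) = \rank(\PB^t\PB) \le \rank(\PB) \le \min(d', v-1)$ where $d'$ is the affine-span dimension of $p$ — here I should be slightly careful, since translating the first vertex to the origin turns the affine span of dimension $d'$ into a linear span of dimension $d'$, so $\PB$ has rank exactly $d'$, and thus $\rank(\gB(p)) \le d'$.

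For the signature claim, assume now $p$ has a $d$-dimensional affine span, so the linearized $\RB$ above has rank $d$, i.e. $\RB$ is a $d \times (v-1)$ matrix of full row rank $d$. I want to exhibit a factorization $\gB(p) = \BB^t \DB \BB$ with $\BB \in \GL_{v-1}(\RR)$ and $\DB$ real diagonal of signature $(s, d-s, v-1-d)$, and then invoke Law~\ref{law:slaw} to conclude this is \emph{the} signature of $\gB(p)$. To build $\BB$: extend the $d$ rows of $\RB$ to a basis of $\RR^{v-1}$ by appending $v-1-d$ further real rows, obtaining an invertible real $(v-1) \times (v-1)$ matrix $\BB$ whose first $d$ rows are those of $\RB$. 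Let $\DB$ be the real diagonal matrix with first $s$ entries $-1$, next $d-s$ entries $+1$, and last $v-1-d$ entries $0$. Then $\BB^t \DB \BB = \RB^t \DB_{\text{sgn}} \RB = \gB(p)$, because the zero block of $\DB$ kills the contribution of the appended rows. By Sylvester's law this signature is well-defined and equals $(s, d-s, v-1-d)$.

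The only mild subtlety — the place I'd be most careful — is the bookkeeping between affine span and linear span after the translation normalization built into $\gB$, and making sure the "purely imaginary coordinate" convention for an $s$-valued configuration really does contribute a $-1$ (not a $+1$) to each of the first $s$ diagonal directions; once that sign is pinned down, everything is a routine factorization argument. No deep input is needed beyond Law~\ref{law:slaw}.
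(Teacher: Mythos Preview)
Your proposal is correct and follows essentially the same approach as the paper: write $\gB(p) = \RB^t \SB \RB$ with $\RB$ real and $\SB$ the $d\times d$ signature matrix, read off realness and the rank bound, then extend the rows of $\RB$ to an invertible $(v-1)\times(v-1)$ matrix $\BB$ and pad $\SB$ with zeros to obtain $\DB$, concluding via Sylvester's law. The paper's argument is identical up to notation (your $\RB$ and $\DB_{\text{sgn}}$ are the paper's $\PB'$ and $\SB$).
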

\begin{proof}
Since $p$ is s-valued, 
$\gB(p)$ can be written in coordinates as $\PB'^t \SB \PB'$, where 
$\PB'$ is a $d$ by $v-1$ real matrix. 
And $\SB$ is a diagonal
matrix with $s$ entries of $-1$ and $d-s$ entries of $1$. 
The rank of $\gB(p)$ cannot exceed the rank of $\PB'$ which is $d'$.

If the affine span of $p$ has dimension $d$, then $\PB'$ has 
rank $d$.
Since the rows of $\PB'$ are linearly independent, we can use
those rows as the first $d$ rows of a non singular 
$v-1$ by $v-1$ matrix $\BB$.
We can use $\SB$ as the upper left block of a diagonal matrix
$\DB$ with the rest of the entries zeroed out.
Then we can write $\MB = \BB^t\DB \BB$ giving us the stated signature.
\end{proof}

\begin{corollary}
\label{cor:sig3}
Let $p \in C_{\CC^d}(\Verts)$
be an s-valued configuration with an affine span of 
dimension $d$. 
Let
$q \in C_{\CC^d}(\Verts)$ 
be an s'-valued configuration that is congruent to $p$. 
Then $q$ has an affine span of dimension $d$ and 
$s=s'$
\end{corollary}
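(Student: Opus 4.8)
The plan is to deduce Corollary~\ref{cor:sig3} directly from Lemma~\ref{lem:sig0} together with Lemma~\ref{lem:gclass} and Sylvester's law of inertia (Law~\ref{law:slaw}). The key observation is that congruence of configurations is completely captured by equality of their $g$-matrices: by Lemma~\ref{lem:gclass}, $p$ and $q$ congruent means $\gB(p) = \gB(q)$. So set $\MB := \gB(p) = \gB(q)$.

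First I would apply Lemma~\ref{lem:sig0} to $p$: since $p$ is $s$-valued with a full $d$-dimensional affine span, $\MB = \gB(p)$ is a real matrix of signature $(s, d-s, v-1-d)$; in particular $\rank \MB = d$. Next I would apply Lemma~\ref{lem:sig0} to $q$: since $q$ is $s'$-valued with affine span of some dimension $d' \le d$, $\gB(q) = \MB$ has rank at most $d'$. Combining $\rank \MB = d$ with $\rank \MB \le d' \le d$ forces $d' = d$, which establishes the first assertion that $q$ has a $d$-dimensional affine span. Now that we know $q$ also has a full-dimensional span, the ``moreover'' clause of Lemma~\ref{lem:sig0} applies to $q$ and tells us $\MB = \gB(q)$ has signature $(s', d-s', v-1-d)$.

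Finally, $\MB$ is a single real symmetric matrix that, by the above, has both signature $(s, d-s, v-1-d)$ and signature $(s', d-s', v-1-d)$. By Sylvester's law of inertia (Law~\ref{law:slaw}), the signature of a real symmetric matrix is well-defined, so $(s, d-s, v-1-d) = (s', d-s', v-1-d)$, hence $s = s'$, completing the proof.

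I do not anticipate a genuine obstacle here: essentially all the work has been front-loaded into Lemma~\ref{lem:sig0} and Law~\ref{law:slaw}, and the corollary is just the bookkeeping that glues them together. The only minor point to be careful about is that Lemma~\ref{lem:sig0}'s signature conclusion is stated only under the hypothesis of a $d$-dimensional affine span, so the argument must first extract $d' = d$ from the rank comparison before invoking the signature statement for $q$ — the steps genuinely have to be taken in that order.
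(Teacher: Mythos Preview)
Your proposal is correct and follows essentially the same route as the paper: use congruence to identify $\gB(p)=\gB(q)$, apply Lemma~\ref{lem:sig0} to $p$ to read off rank $d$ and signature $(s,d-s,v-1-d)$, use the rank bound from Lemma~\ref{lem:sig0} for $q$ to force its affine span to be $d$, then apply Lemma~\ref{lem:sig0} again to $q$ and invoke Sylvester's law. The only cosmetic difference is that you cite Lemma~\ref{lem:gclass} where the paper cites Corollary~\ref{cor:inj}, which amounts to the same thing.
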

\begin{proof}
From Lemma~\ref{lem:sig0}, $\gB(p)$ has signature $(s,d-s,v-1-d)$.
By the congruence assumption and Corollary~\ref{cor:inj},
, $\gB(p)=\gB(q)$.
As $\gB(q)$ has rank $d$, 
$q$ must have an affine span no less than $d$, and thus equal to $d$.
From Lemma~\ref{lem:sig0}, $\gB(q)$ must have 
signature $(s',d-s',v-1-d)$.
Thus $s=s'$.
\end{proof}

Now we can establish that when there is a GGR subgraph, 
the signature of all real matrices in a fiber of $\pi_\Edges$ is fixed.

\begin{lemma}
\label{lem:sameSig}
Let $\Gamma$ be  a graph and $\Gamma_0$ a GGR subgraph with 
$v_0$ vertices 
where $v_0 \geq d+1$.
Let $\rho$ be an s-valued framework 
in $C_{\CC^d}(\Gamma)$ 
for some $s$, with configuration $p$.
Suppose also that the affine span of the vertices of $\Gamma_0$ 
in $p$
is all of $\CC^d$.
Then all of the real matrices in the fiber
$\pi_\Edges^{-1} (\pi_\Edges(\gB(\rho)))$
must have signature
$(s,d-s,v-1-d)$.
\end{lemma}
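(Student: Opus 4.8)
The plan is to argue that the GGR subgraph $\Gamma_0$ pins down the signature, and then propagate this to the whole fiber. First I would record the following observation about $\Gamma_0$: since $\Gamma_0$ is GGR in $\EE^d$, by Theorem~\ref{thm:sudo} it is GGR in $\SSS^d$ for the signature $s$ in question, and in fact it is GGR in $\CC^d$ by Theorem~\ref{thm:complex}. Restricting $\rho$ to $\Gamma_0$ gives an $s$-valued framework $\rho_0$ of $\Gamma_0$ whose configuration has full $d$-dimensional affine span (by hypothesis on $p$), so $\rho_0$ has a well-behaved g-matrix of rank $d$ and, by Lemma~\ref{lem:sig0}, signature $(s,d-s,v_0-1-d)$.

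The heart of the argument is that the restriction map commutes with passing to g-matrices and edge-length maps, together with the fact that GGR of $\Gamma_0$ in $\CC^d$ forces the fiber over $\pi_\Edges(\gB(\rho_0))$ within $\GG_{\Gamma_0}$ (the rank-$\le d$ symmetric matrices for $v_0$ vertices) to be a single congruence class — i.e.\ a single g-matrix. So suppose $\MB$ is any real matrix in $\pi_\Edges^{-1}(\pi_\Edges(\gB(\rho)))$ for the full graph $\Gamma$. Then $\MB$ corresponds (via Lemma~\ref{lem:sig1}, applied to whatever signature and rank $\MB$ happens to have) to some $s'$-valued configuration $q$ of $\Verts(\Gamma)$ with $\gB(q)=\MB$, and $q$ is equivalent to $\rho$. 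In particular $q$ restricted to $\Gamma_0$ is equivalent to $\rho_0$. Here I need a genericity input: I would invoke the setup so that $\rho$ (hence $\rho_0$, via the extension-field remark that $\Gamma_0$'s vertices stay generic) is generic; then since $\Gamma_0$ is GGR in $\CC^d$, the restriction $q|_{\Gamma_0}$ must be congruent to $\rho_0$ in $\CC^d$. By Corollary~\ref{cor:inj} this means $\gB(q|_{\Gamma_0}) = \gB(\rho_0)$, which has rank $d$; since this is a principal submatrix-type restriction of $\MB$, it follows that $q|_{\Gamma_0}$ already has $d$-dimensional affine span, and hence so does $q$. Now Corollary~\ref{cor:sig3} applies to the $s$-valued $\rho_0$ and the $s'$-valued $q|_{\Gamma_0}$, both of full affine span and congruent: it forces $s=s'$. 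Then Lemma~\ref{lem:sig0} applied to $q$ (which is $s$-valued with full span) gives that $\MB=\gB(q)$ has signature $(s,d-s,v-1-d)$, as desired.

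The main obstacle I anticipate is the bookkeeping around the restriction to $\Gamma_0$: making precise that "$g$-matrix of the restriction'' equals the appropriately-sized submatrix of $\gB(q)$ after choosing the base vertex inside $\Verts(\Gamma_0)$, and that $\pi_\Edges$ for $\Gamma_0$ factors through $\pi_\Edges$ for $\Gamma$ followed by forgetting the edges not in $\Gamma_0$. One must also be careful that the generic hypothesis really is available — in the intended application this lemma is used for a generic $\rho$, so I would either state the lemma with a genericity hypothesis on $\rho$ or extract exactly the consequence of GGR of $\Gamma_0$ needed (that the $\pi_\Edges$-fiber of a generic point of $\GG_{\Gamma_0}$ meeting the reals is a single g-matrix). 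Once those identifications are in place, the rank/signature conclusions are immediate from Lemma~\ref{lem:sig0}, Corollary~\ref{cor:sig3}, and Corollary~\ref{cor:inj}, with no real computation.
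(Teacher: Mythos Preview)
Your proposal is correct and follows essentially the same route as the paper's proof: take a real $\MB$ in the fiber, realize it via Lemma~\ref{lem:sig1} as $\gB(q)$ for some $s'$-valued $q$, restrict to $\Gamma_0$, use complex GGR of $\Gamma_0$ to force congruence of $q_0$ with $p_0$, then invoke Corollary~\ref{cor:sig3} and Lemma~\ref{lem:sig0} to pin down the signature. The paper makes the same ``choose the base vertex in $\Verts(\Gamma_0)$'' move you flag, and your explicit concern about genericity is well-placed---the paper's proof tacitly uses it when asserting $p_0$ and $q_0$ are congruent, and indeed the lemma is only applied to generic $\rho$.
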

\begin{proof}
Wlog, let $\Gamma_0$ include the first vertex,
and let its vertex set be $\Verts_0$.
We denote by $p_0$ the configuration $p$ restricted to 
$\Verts_0$. 
$p_0$, as a restriction of $p$, is s-valued.

Let $\MB$ be any real matrix in the fiber, and let it have 
signature $(s', d'-s', v-1-d')$ for some $s'$ and $d'$.
From Lemma~\ref{lem:sig1},
there must be some $q$, 
an s'-valued
configuration,  with $\gB(q)=\MB$.
When restricted to $\Verts_0$, the configuration $q_0$ must also
be s'-valued. Since $\Gamma_0$ is complex GGR, $p_0$ must be
congruent to $q_0$. Then from Corollary~\ref{cor:sig3}
$q_0$ must be s-valued and have affine span of dimension
$d$. Thus $s=s'$. Since $q$, as a super-set of
$q_0$, must have affine span of dimension
$d$, then from Lemma~\ref{lem:sig0}, $M$ must have
signature 
$(s,d-s,v-1-d)$.
\end{proof}

---

\begin{definition}
  Let $V\!$ and $W\!$ be irreducible complex algebraic sets 
of the same dimension
and $f : V \to W$
  be a surjective (or just dominant) algebraic map, all defined over~$\kk$.
Then the number of points in the fiber $f^{-1}(w)$ for any
generic $w \in W$ is a constant. This constant is called the
\emph{algebraic degree} of $f$.
\end{definition}

With this, we can complete the proof of Theorem~\ref{thm:sudoGP} by
applying a degree argument:
\begin{proof}
We will assume $\Gamma$ is generically locally rigid, 
otherwise we are already done.

Let $\rho$ be generic in 
$C_{\EE^d}(\Gamma)$.
From Lemma~\ref{lem:sig0}
$\gB(\rho)$ is real with signature $(0,d,v-1-d)$ (ie. it is PSD).
Because of the existence of a  GGR subgraph, 
from Lemma~\ref{lem:sameSig}, all of the real matrices in the fiber
$\pi_\Edges^{-1}(\pi_\Edges(\gB(\rho)))$ must have the same signature.
From Lemma~\ref{lem:sig0} and
Corollary~\ref{cor:inj}, these matrices are in 
one to one correspondence with the congruence classes
$[\rho_i]$ of 
equivalent frameworks in 
$C_{\EE^d}(\Gamma)$.
Since $\Gamma$ is not GGR, 
from Theorem~\ref{thm:ght}, there must be an 
even number of such classes and thus an even number
of real matrices in the fiber.

From  Lemma~\ref{lem:conj}, there are an even number of
non real matrices in the fiber and 
we see that the total cardinality
of $\pi_\Edges^{-1}(\pi_\Edges(\gB(\rho)))$ is even.
Since $\pi_\Edges(\gB(\rho))$ is generic in the image $\pi_\Edges(\GG)$,
this means that 
the \emph{algebraic degree} of $\pi_\Edges$ must be even.

Now suppose 
$\sigma$ is generic in $C_{\SSS^d}(\Gamma)$, 
which we model as a generic s-valued framework in $C_{\CC^d}(\Gamma)$.
$\gB(\sigma)$ is real valued and has signature $(s,d-s,v-1-d)$.
From Lemma~\ref{lem:sameSig}
all of the real matrices in the fiber 
$\pi_\Edges^{-1}(\pi_\Edges(\gB(\sigma)))$
must have the same  signature $(s,d-s,v-1-d)$.

Since $\gB(\sigma)$ is real, then so is $\pi_\Edges(\gB(\sigma))$
so from Lemma~\ref{lem:conj} there must be
an even number of non real matrices in the fiber 
$\pi_\Edges^{-1}(\pi_\Edges(\gB(\sigma)))$, and thus an even
number of real matrices in the fiber, all with signature
$(s,d-s,v-1-d)$.

From Lemma~\ref{lem:sig0} and
Corollary~\ref{cor:inj}, 
these are in 
one to one correspondence with the congruence classes
$[\sigma_i]$ of 
equivalent s-valued frameworks in 
$C_{\CC^d}(\Gamma)$.
Thus $\Gamma$ is generically globally flexible in $\SSS^d$.
\end{proof}

\begin{remark}
The reasoning  in  the above proof does not
hold when $\Gamma$ does not have the required GGR subgraph.
In particular, the non-GGR
graph $G_3$ of Jackson and Owen~\cite{owjack}
generically has an \emph{odd} number (namely $45$) of equivalent
complex realizations in $\CC^2$.
\end{remark}
\section{Extension to Hyperbolic Space}

\begin{figure}
  \centering
  \includegraphics[width=3in]{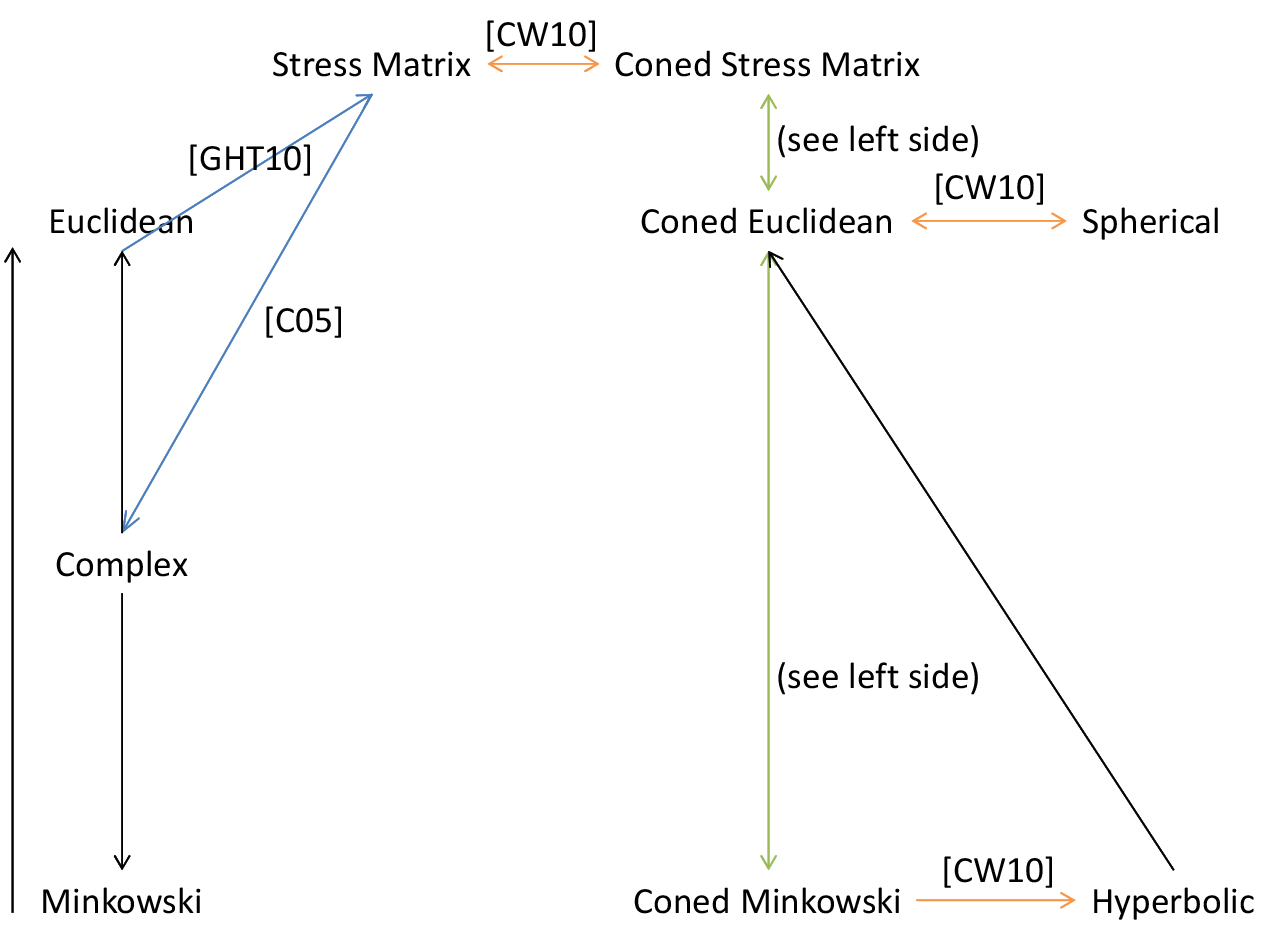}
  \caption{Implications between generic global rigidity in various spaces.
Black lines show implications proven in this paper.}
  \label{fig:arrows}
\end{figure}

Combining ideas from the previous section with results from
Connelly and Whiteley~\cite{connelly2010global},
we can transfer the property of generic global rigidity to
hyperbolic space $\HH^d$ as well.
\begin{corollary}
\label{cor:hyper}
A graph $\Gamma$ 
is generically globally rigid in $\EE^d$
iff it
is generically globally rigid in $\HH^d$.
\end{corollary}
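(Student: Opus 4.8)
The plan is to derive Corollary~\ref{cor:hyper} by linking generic global rigidity in $\HH^d$ to the pseudo Euclidean result of Theorem~\ref{thm:sudo}. The key input is the work of Connelly and Whiteley~\cite{connelly2010global}, which relates rigidity properties in hyperbolic space to rigidity properties in an associated real space with an indefinite metric; the standard model here is the hyperboloid model, where $\HH^d$ sits inside $\RR^{d+1}$ equipped with a Minkowski-type form of signature $1$ (one minus sign). Concretely, I would first recall (or cite) from~\cite{connelly2010global} the statement that a graph $\Gamma$ is generically globally rigid in $\HH^d$ iff it is generically globally rigid in the pseudo Euclidean space of the same dimension with signature $s=1$, i.e. $\SSS^d$ with $s=1$ — their Pogorelov-type argument already establishes that the cone over $\HH^d$ and the corresponding indefinite-signature space carry the same generic rigidity information.

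With that translation in hand, the corollary is almost immediate: by Theorem~\ref{thm:sudo}, $\Gamma$ is GGR in $\EE^d$ iff it is GGR in $\SSS^d$ for \emph{any} signature $s$, in particular $s=1$; and by the Connelly--Whiteley correspondence, GGR in $\SSS^d$ with $s=1$ is equivalent to GGR in $\HH^d$. Chaining these two equivalences yields GGR in $\EE^d$ iff GGR in $\HH^d$. So the proof is really a two-line composition of Theorem~\ref{thm:sudo} with an off-the-shelf result, plus a sentence pinning down exactly which signature the hyperbolic model corresponds to (signature $1$, from the single timelike direction in the ambient form).

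The one place requiring care — and the step I expect to be the main obstacle — is making sure the notion of ``generic'' and the notion of ``congruence'' match up across the hyperbolic-to-pseudo-Euclidean passage. A hyperbolic framework lives on the hyperboloid, which is itself an algebraic subvariety (the level set of the ambient quadratic form) of the full configuration space of the ambient pseudo Euclidean space, so a point that is generic as a hyperbolic configuration is \emph{not} generic as an ambient $\SSS^d$ configuration. One must check that the Connelly--Whiteley reduction nonetheless transports generic global rigidity faithfully — i.e. that their argument does not secretly use ambient genericity — and that the hyperbolic isometry group (acting transitively, with the same dimension $\binom{d+1}{2}$ as the Euclidean and pseudo-Euclidean isometry groups) induces exactly the congruence relation needed. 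I would handle this by quoting the precise proposition from~\cite{connelly2010global} rather than reproving it, noting that the dimension bookkeeping (stress matrices of rank $v-d-1$, congruence classes of dimension $\binom{d+1}{2}$) is identical in all three settings, and remarking that the implication diagram of Figure~\ref{fig:arrows} then closes up as claimed.

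\begin{proof}
By the hyperboloid model, $\HH^d$ embeds isometrically as a component of a level set of the quadratic form of signature $(1,d)$ on $\RR^{d+1}$; equivalently, hyperbolic distances are recorded by the same indefinite form that defines $\SSS^d$ with signature $s=1$. Connelly and Whiteley~\cite{connelly2010global} use this together with a Pogorelov-type coning argument to prove that a graph $\Gamma$ is generically globally rigid in $\HH^d$ if and only if it is generically globally rigid in the pseudo Euclidean space $\SSS^d$ with signature $s=1$. (Their argument goes through the cone construction and does not require ambient genericity; the isometry group of $\HH^d$ has dimension $\binom{d+1}{2}$, matching the dimension of the congruence classes used throughout, so the translation of generic global rigidity is faithful.) On the other hand, Theorem~\ref{thm:sudo} shows that $\Gamma$ is generically globally rigid in $\EE^d$ if and only if it is generically globally rigid in $\SSS^d$ for this (or any) signature. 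Composing the two equivalences gives that $\Gamma$ is generically globally rigid in $\EE^d$ if and only if it is generically globally rigid in $\HH^d$.
\end{proof}
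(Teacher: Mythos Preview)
Your proposal rests on a citation that does not carry the weight you put on it. You assert that Connelly and Whiteley~\cite{connelly2010global} prove ``$\Gamma$ is GGR in $\HH^d$ iff $\Gamma$ is GGR in $\SSS^d$ with $s=1$,'' and then compose with Theorem~\ref{thm:sudo}. But the paper's own remark after the proof of the corollary says exactly the opposite: Section~7 of~\cite{connelly2010global} contains only a \emph{brief sketch} of a Pogorelov-type transfer to hyperbolic space, and ``that discussion does not go into the details showing that their construction hits an open neighborhood of frameworks (i.e.\ a generic framework), which is the main technical contribution of our Theorem~\ref{thm:sudo}.'' In other words, the very genericity issue you flag as ``the one place requiring care'' and then dismiss by promising to ``quote the precise proposition'' is precisely what is \emph{missing} from the reference. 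There is no off-the-shelf equivalence to quote; supplying it is the content of this section of the paper.

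There is also a dimensional slip. The hyperboloid model embeds $\HH^d$ in $\RR^{d+1}$ with the Minkowski form, so the relevant pseudo-Euclidean ambient is $\MM^{d+1}$, not $\SSS^d$. The paper's route reflects this: one passes to the coned graph $\Gamma * \{c\}$ in dimension $d+1$, uses Theorem~\ref{thm:cone} (the rigorous Connelly--Whiteley coning result, which is purely Euclidean) to relate $\EE^d$ and $\EE^{d+1}$, applies Theorem~\ref{thm:sudo} at dimension $d+1$ to reach $\MM^{d+1}$, and then---crucially---does additional work (Lemmas~\ref{lem:Htfer1}, \ref{lem:Htfer2}, \ref{lem:Mtfer} and the ``upper coned'' bookkeeping) to ensure the Minkowski frameworks produced can actually be pushed onto the hyperbolic locus while remaining generic. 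Your two-line composition skips this entire layer.
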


This can be done using the coning operation explored in~\cite{connelly2010global}, and the proof is developed below.
\begin{definition}
Given a graph $\Gamma$ and a new vertex $u$, the \emph{coned graph}
$\Gamma * \{c\}$ is the graph obtained starting with $\Gamma$, adding
the vertex $c$ and adding an edge connecting $c$ to each vertex in 
$\Gamma$.
\end{definition}

\begin{theorem}[Connelly and Whiteley~\cite{connelly2010global}]
\label{thm:cone}
A graph $\Gamma$ 
is generically globally rigid in $\EE^d$
iff $\Gamma * \{c\}$
is generically globally rigid in $\EE^{d+1}$.
\end{theorem}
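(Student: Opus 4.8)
The plan is to reduce both sides of the equivalence to the equilibrium-stress-matrix characterization of generic global rigidity and then to transport that condition across the coning operation. Write $v$ for the number of vertices of $\Gamma$. First I would dispose of the small case $v\le d+1$: a standard dimension count shows that a graph on at most $d+1$ vertices is generically locally rigid in $\EE^d$ only if it is the complete graph $K_v$, and $K_v$ is certainly GGR in $\EE^d$; likewise $\Gamma*\{c\}$ has $v+1\le d+2$ vertices, so it is GGR in $\EE^{d+1}$ iff it equals $K_{v+1}$, i.e.\ iff $\Gamma=K_v$. Hence in this regime both sides hold precisely when $\Gamma=K_v$, and from here on I would assume $v\ge d+2$, noting that then $\Gamma*\{c\}$ has $v+1\ge d+3$ vertices, so the stress-matrix results apply to it as well.

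Next I would invoke the characterization itself: by Connelly's sufficient condition~\cite{Connelly05:GenericGlobalRigidity} together with Theorem~\ref{thm:ght}, a graph $H$ on $n\ge k+2$ vertices is GGR in $\EE^k$ if and only if some framework of $H$ in $\EE^k$ carries an equilibrium stress matrix of the maximal possible rank $n-k-1$ (equivalently every generic framework does, since the rank of the stress matrix attains its maximum on a Zariski-open set of frameworks, as used in~\cite{GHT10}). Applying this to $\Gamma$ in $\EE^d$ and to $\Gamma*\{c\}$ in $\EE^{d+1}$, the relevant target ranks are $v-d-1$ and $(v+1)-(d+1)-1=v-d-1$ respectively --- the \emph{same} number --- so it remains to prove that $\Gamma$ has a framework in $\EE^d$ with an equilibrium stress matrix of rank $v-d-1$ if and only if $\Gamma*\{c\}$ has a framework in $\EE^{d+1}$ with an equilibrium stress matrix of rank $v-d-1$.

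The forward (cone-up) implication I would verify by hand: realize $\Gamma*\{c\}$ in $\EE^{d+1}$ by lifting a given framework $\rho$ of $\Gamma$ into the affine hyperplane $\{x_{d+1}=1\}$ and placing the apex $c$ at the origin. A direct inspection of the equilibrium equations shows that every equilibrium stress of this coned framework must vanish on all cone edges and restrict to an equilibrium stress of $(\Gamma,\rho)$, and conversely; hence its stress matrix is exactly the stress matrix of $(\Gamma,\rho)$ bordered by a zero row and column for $c$, which has the same rank $v-d-1$. For the reverse (cone-down) implication I would use the projective invariance of equilibrium stress matrices --- a projective transformation of a framework multiplies each stress coefficient by a nonzero scalar and therefore preserves the rank of the stress matrix: starting from a framework of $\Gamma*\{c\}$ in $\EE^{d+1}$ with a rank-$(v-d-1)$ stress matrix, transform it so that the apex $c$ is carried to a point at infinity, so that the cone edges degenerate into the constraint that the remaining vertices lie on an affine hyperplane; the framework then becomes a framework of $\Gamma$ in that hyperplane $\cong\EE^d$, and --- after the correct normalization of the cone-edge stresses, following~\cite{connelly2010global} --- its stress matrix descends to an equilibrium stress matrix of $\Gamma$ in $\EE^d$ of rank $\ge v-d-1$, hence exactly $v-d-1$.

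The hard part will be exactly this cone-down step: making precise how a stress matrix passes through the degeneration of the apex to infinity, and in particular controlling the cone-edge stresses so that no rank is lost in the limit, is the delicate technical content (this is essentially what~\cite{connelly2010global} establishes). Everything else --- the reduction to the stress-matrix condition, the cone-up computation, and the $v\le d+1$ bookkeeping --- is routine, and assembling the two implications then gives the stated equivalence.
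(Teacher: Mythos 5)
The paper never proves Theorem~\ref{thm:cone}: it is imported verbatim from Connelly and Whiteley~\cite{connelly2010global}, with only the parenthetical remark that it ``is proven using an argument about equilibrium stress matrices.'' So there is no internal proof to compare you against; your outline is indeed in the spirit of the cited argument (stress matrices, coning, projective invariance), but as a standalone proof it has two genuine gaps.

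First, your reduction step is stated incorrectly: it is not true that a graph on $n\ge k+2$ vertices is GGR in $\EE^k$ as soon as \emph{some} framework carries a rank-$(n-k-1)$ equilibrium stress matrix, and the set of frameworks admitting such a stress is not automatically Zariski-dense once nonempty, because stress spaces jump in dimension at special frameworks. For example, the path on three vertices in $\EE^1$ has a rank-$1$ ($=n-k-1$) stress matrix at the framework with all three points coincident, yet it is not GGR in $\EE^1$; generic frameworks of it carry only the zero stress. Connelly's sufficient condition (the Euclidean form of Theorem~\ref{thm:comcon}) requires a \emph{generic} framework, and your cone-up construction (apex at the origin, base in the hyperplane $x_{d+1}=1$) is emphatically non-generic. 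To close this you must still transfer the max-rank stress to a generic framework of $\Gamma*\{c\}$ --- e.g.\ by invoking infinitesimal rigidity of the special coned framework (Whiteley's coning theorem for infinitesimal rigidity) so that the stress space has locally constant dimension, then a continuity/lower-semicontinuity-of-rank argument to cover a Euclidean-open, hence generic-point-containing, set of frameworks --- or by the projective machinery of~\cite{connelly2010global}. Second, for the cone-down direction you explicitly defer ``the delicate technical content'' to~\cite{connelly2010global}; but that is the very theorem being proved, so as written the argument is circular. Controlling the cone-edge stresses in the degeneration of the apex, and showing the resulting framework of $\Gamma$ in the hyperplane can be taken generic in $\EE^d$, is exactly the content one would have to supply.
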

(This theorem is proven using an argument about equilibrium 
stress matrices. See Figure~\ref{fig:arrows}).

By modeling spherical d-space within a Euclidean d+1 space, 
Connelly and Whiteley then
show the equivalence between Euclidean GGR of $\Gamma * \{c\}$
and spherical GGR of $\Gamma$.

In a  similar manner, one can model hyperbolic space $\HH^d$ within 
the d+1 dimensional pseudo Euclidean space that has one negative coordinate
in its signature.
We denote this \emph{Minkowski space} as $\MM^{d+1}$.
In particular,  we model $\HH^d$ as the subset of 
vectors  $\vec{v} \in \MM^{d+1}$
such that $|\vec{v}|^2=-1$ under the Minkowski metric, 
and such that  $\vec{v}_1 > 0$, where $\vec{v}_1$ 
is the first coordinate of $\vec{v}$.
For two vectors  $\vec{v}$ and $\vec{w}$ on this 
``hyperbolic locus'', their  distance in $\HH^d$
corresponds to the arcosh of their Minkowski inner product.

\subsection{Proof of Corollary =>}

We begin with a hyperbolic lemma that mirrors a spherical 
lemma in~\cite{connelly2010global}.

\begin{lemma}
\label{lem:Htfer1}
Let $\rho$ and $\sigma$ be two equivalent and non congruent frameworks
of $\Gamma$
in $\HH^d$, then there is a corresponding pair $(\rho_\MM'',\sigma_\MM'')$
of equivalent and
non congruent 
frameworks of $\Gamma * \{c\}$ in $\MM^{d+1}$.
Moreover, if $\rho$ (or $\sigma$) is generic in $\HH^d$, then we can find a 
corresponding $\rho_\MM''$ (or $\sigma_\MM''$) that is  generic in 
$\MM^{d+1}$.
\end{lemma}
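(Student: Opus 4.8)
The plan is to realize the coned graph in $\MM^{d+1}$ by sending each vertex of $\Gamma$ to its image on the hyperbolic locus and sending the cone vertex $c$ to the origin of $\MM^{d+1}$. Concretely, given a framework $\rho$ of $\Gamma$ in $\HH^d$, regard each $\rho(j)$ as a vector $\rho_\MM(j)\in\MM^{d+1}$ on the hyperbolic locus (so $|\rho_\MM(j)|^2=-1$), and set $\rho_\MM(c):=\vec 0$. I would then check that the squared length of the cone edge $\{c,j\}$ is $|\rho_\MM(j)-\vec0|^2=-1$, a constant independent of $j$, and that the squared length of an edge $\{j,k\}$ of $\Gamma$ is $|\rho_\MM(j)-\rho_\MM(k)|^2 = -2 - 2\langle\rho_\MM(j),\rho_\MM(k)\rangle_\MM$, which is a fixed (monotone) function of the hyperbolic distance between $\rho(j)$ and $\rho(k)$. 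Hence if $\rho$ and $\sigma$ are equivalent in $\HH^d$, the coned frameworks $\rho_\MM,\sigma_\MM$ of $\Gamma*\{c\}$ are equivalent in $\MM^{d+1}$: the cone edges all have squared length $-1$ in both, and each $\Gamma$-edge has matching squared length because the hyperbolic distances agree.

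Next I would address non-congruence. Since $\rho$ and $\sigma$ are non-congruent in $\HH^d$, some pair of vertices has differing hyperbolic distance, hence by the same formula the corresponding $\Gamma$-edge-type pair has differing Minkowski squared length, so $\rho_\MM$ and $\sigma_\MM$ are non-congruent as frameworks of $\Gamma*\{c\}$ in $\MM^{d+1}$. This gives the pair $(\rho_\MM,\sigma_\MM)$; the notation $(\rho_\MM'',\sigma_\MM'')$ in the statement anticipates a further genericity adjustment, which is the real content of the lemma.

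For the genericity claim, the issue is that placing $c$ exactly at the origin and forcing every $\rho_\MM(j)$ onto the quadric $|\vec v|^2=-1$ imposes algebraic equations, so $\rho_\MM$ is never generic in $C_{\MM^{d+1}}(\Gamma*\{c\})$ as it stands. The fix is to exploit that the Minkowski metric is invariant under the full pseudo-orthogonal group and under scaling of the cone vertex: the coned construction only pins down each vertex up to the congruence class (in $\MM^{d+1}$) of the coned configuration, and in fact up to the action of a positive-dimensional group acting on the cone point and on an overall scale. So I would apply a generic element of the available symmetry group — a generic isometry of $\MM^{d+1}$, together with a generic rescaling that moves $c$ off the origin along a generic line and scales the hyperbolic-locus points accordingly — to $\rho_\MM$, producing $\rho_\MM''$ that is equivalent to the same transformed $\sigma_\MM''$, still non-congruent, and whose coordinates now satisfy no rational algebraic relation. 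Here I would lean on the analogous spherical argument in~\cite{connelly2010global}: that argument shows precisely that coning plus a generic congruence/scaling produces a generic framework of the coned graph, and the Minkowski case is formally identical once one substitutes the indefinite metric and notes that scaling the cone vertex rescales all cone-edge squared lengths uniformly while the $\Gamma$-edge squared lengths transform by the same scalar, preserving equivalence.

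The main obstacle is this last step: making precise which group acts, verifying that its orbit through $\rho_\MM$ has the full dimension $v\cdot(d+1)+\binom{d+2}{2}$ needed so that a generic orbit point has no rational algebraic relations, and checking that the \emph{same} group element applied to $\sigma_\MM$ keeps the pair equivalent and non-congruent. I would handle dimension counting exactly as in Lemma~\ref{lem:dimE} — the fiber of "forget the cone data and the ambient isometry" over a hyperbolic configuration has dimension $\binom{d+2}{2}$ (the Minkowski congruence group) plus the scaling direction, matching the excess of $C_{\MM^{d+1}}(\Gamma*\{c\})$ over the image of the hyperbolic configuration space — and then invoke Remark~\ref{rem:ext}/the complexification machinery to pass from "lies in a component of full dimension defined over an algebraic extension" to "a generic point projects to a generic framework." Everything else is a routine squared-length computation.
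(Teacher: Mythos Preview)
Your initial construction and the equivalence/non\hyp congruence checks are correct, but the genericity step has a real gap: the group you propose to act by is too small. A generic $\rho$ in $\HH^d$ contributes transcendence degree $v\cdot d$; to be generic in $C_{\MM^{d+1}}(\Gamma*\{c\})$ you need transcendence degree $(v+1)(d+1)$, so you must introduce $v+d+1$ new algebraically independent parameters. A Minkowski isometry gives only $d+1$ genuinely new parameters (the translation part), because the Lorentz subgroup $O(1,d)$ already acts as the isometry group of $\HH^d$ and hence moves a generic hyperbolic configuration only within its $\HH^d$\hyp congruence orbit; one global scale adds one more. That is $d+2$ parameters, short by $v-1$ whenever $v>1$. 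Your asserted fiber dimension $\binom{d+2}{2}+1$ therefore cannot equal the actual excess $v+d+1$, and the appeal to Lemma~\ref{lem:dimE} and complexification does not repair this.

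The idea you are missing, and that the paper (following \cite{connelly2010global}) uses, is \emph{per\hyp vertex} scaling. For each $t\in\Verts(\Gamma)$ choose an independent generic positive scalar $\alpha_t$ and replace $\rho_\MM(t)$ by $\alpha_t\,\rho_\MM(t)$ and $\sigma_\MM(t)$ by $\alpha_t\,\sigma_\MM(t)$ (the \emph{same} $\alpha_t$ for both). Since every vertex lies on $|\cdot|^2=-1$, each cone edge gets squared length $-\alpha_t^2$ in both frameworks, and each $\Gamma$\hyp edge $\{t,u\}$ gets squared length $-\alpha_t^2-\alpha_u^2-2\alpha_t\alpha_u\,\langle\cdot,\cdot\rangle_\MM$, which matches because the Minkowski inner products (equivalently, the hyperbolic distances) match. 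Thus equivalence and non\hyp congruence are preserved, and you have gained $v$ new independent parameters. A subsequent generic translation supplies the remaining $d+1$, for exactly $v+d+1$. With this correction the argument is elementary and needs none of the machinery from Section~\ref{sec:sudo2}.
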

\begin{proof}
Given $\rho$ and $\sigma$, we model these as
$\rho_\MM$ and $\sigma_\MM$, two
frameworks of $\Gamma * \{c\}$ in $\MM^{d+1}$, with the cone vertex
$c$ at the origin and the rest of the vertices on the hyperbolic locus.
For each vertex $t\in \Verts(\Gamma)$,
we pick a generic positive scale $\alpha_t$ and multiply all of the
$d+1$
coordinates of $\rho_\MM(t)$ and $\sigma_\MM(t)$ by this $\alpha_t$.
Let us call the resulting pair,
$\rho_\MM'(t)$ and $\sigma_\MM'(t)$.
As in~\cite{connelly2010global}, $\rho_\MM'(t)$ and $\sigma_\MM'(t)$
are equivalent and non
congruent  in $\MM^{d+1}$. By translating these frameworks
by some generic offset, we obtain the desired pair
$\rho_\MM''$ and $\sigma_\MM''$.
\end{proof}

\begin{proof}[Proof of corollary =>]
Suppose a graph $\Gamma$ is not GGR in $\HH^d$ 
then from Lemma~\ref{lem:Htfer1},
$\Gamma * \{c\}$ 
is not GGR in $\MM^{d+1}$. 
Then From Theorem~\ref{thm:sudo},
$\Gamma * \{c\}$ is not GGR in $\EE^{d+1}$.
Then from Theorem~\ref{thm:cone},
$\Gamma$ is not GGR in $\EE^{d}$.
See Figure~\ref{fig:arrows}.
\end{proof}

\subsection{Proof of Corollary <=}

In order to prove the other direction
we restrict ourselves to Minkowski frameworks
that can be moved to the hyperbolic locus using positive scaling.

\begin{definition}
We say that a framework $\rho$ of $\Gamma * \{c\}$ in $\MM^{d+1}$
is  \emph{upper coned} if 
for all vertices 
$t \in \Verts(\Gamma)$, we have 
$|\rho(t)-\rho(c)|^2 < 0$ and
$(\rho(t)-\rho(c))_1 > 0$.
We say that $\rho$ 
is  \emph{lower coned} if 
for all vertices 
$t \in \Verts(\Gamma)$, we have 
$|\rho(t)-\rho(c)|^2 < 0$ and
$(\rho(t)-\rho(c))_1 < 0$.
\end{definition}

The following lemma is 
the needed partial converse of Lemma~\ref{lem:Htfer1}.

\begin{lemma}
\label{lem:Htfer2}
Let $\rho$ and $\sigma$ be two equivalent and non congruent frameworks
of $\Gamma * \{c\}$ in $\MM^{d+1}$.
And let us also assume that $\rho$ and $\sigma$ are upper coned.
Then there is a corresponding pair $(\rho_\HH,\sigma_\HH)$
of equivalent and
non congruent frameworks of $\Gamma$ in $\HH^{d}$.
Moreover, if $\rho$ (or $\sigma$) is generic in $\MM^{d+1}$, then 
$\rho_\HH$ (or $\sigma_\HH$) is generic in 
$\HH^{d}$.
\end{lemma}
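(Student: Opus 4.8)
The claim is the ``upper coned'' converse to Lemma~\ref{lem:Htfer1}: starting with an equivalent, non-congruent pair of upper coned frameworks of $\Gamma*\{c\}$ in $\MM^{d+1}$, produce an equivalent, non-congruent pair of frameworks of $\Gamma$ in $\HH^d$, preserving genericity. The idea is to reverse the scaling-and-translation construction of Lemma~\ref{lem:Htfer1}. First I would translate $\rho$ and $\sigma$ so that the cone vertex $c$ is moved to the origin; this is a Minkowski congruence applied to each framework, so it preserves equivalence, preserves non-congruence, and (using a generic translation offset, or just noting translations are defined over $\QQ$) preserves genericity. After this step, the ``upper coned'' hypothesis says exactly that each vertex $t\in\Verts(\Gamma)$ sits at a point $\rho(t)$ with $|\rho(t)|^2<0$ and positive first coordinate, i.e.\ on the positive cone of timelike directions.

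Second, for each vertex $t$ I would rescale $\rho(t)$ by the unique positive scalar $\beta_t := 1/\sqrt{-|\rho(t)|^2}$ so that it lands on the hyperbolic locus $\{\,\vec v : |\vec v|^2 = -1,\ \vec v_1>0\,\}$, and similarly rescale $\sigma(t)$ by its own positive scalar; call the results $\rho_\HH$ and $\sigma_\HH$. These are now genuine frameworks of $\Gamma$ in $\HH^d$. The key computation is that the hyperbolic distance along an edge $\{t,u\}$ is $\operatorname{arcosh}$ of the Minkowski inner product of the two (normalized) points, and that this inner product is unchanged by the coned-framework equivalence relation: the edge $\{c,t\}$ measurement $|\rho(t)|^2$ pins down $\beta_t$, and the edge $\{t,u\}$ measurement $|\rho(t)-\rho(u)|^2 = |\rho(t)|^2 + |\rho(u)|^2 - 2\langle\rho(t),\rho(u)\rangle$ then pins down $\langle \rho(t),\rho(u)\rangle$, hence $\beta_t\beta_u\langle\rho(t),\rho(u)\rangle$, hence the hyperbolic edge length. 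Since $\rho$ and $\sigma$ agree on all these edge measurements (they are equivalent in $\MM^{d+1}$), $\rho_\HH$ and $\sigma_\HH$ agree on all hyperbolic edge lengths of $\Gamma$; this is the same bookkeeping already used in \cite{connelly2010global}, just run backwards.

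Third, non-congruence: if $\rho_\HH$ and $\sigma_\HH$ were congruent in $\HH^d$, then applying the forward construction of Lemma~\ref{lem:Htfer1} (scaling each vertex back out and translating) would produce congruent frameworks of $\Gamma*\{c\}$ in $\MM^{d+1}$; but a congruence of the un-normalized data is recovered by combining the hyperbolic congruence with the diagonal rescalings, contradicting the hypothesis that $\rho$ and $\sigma$ are non-congruent. Concretely, a Minkowski congruence of $\Gamma*\{c\}$ fixing the origin is determined by its action on the timelike points, which after normalization is exactly a hyperbolic isometry; so congruence upstairs and downstairs correspond. Finally, genericity: the map $\rho(t)\mapsto \beta_t\rho(t)$ with $\beta_t = (-|\rho(t)|^2)^{-1/2}$ is an algebraic map (after squaring) defined over $\QQ$, and a generic point of $\MM^{d+1}$ configuration space maps onto a generic point of $\HH^d$ configuration space; I would cite the same genericity-transfer lemma from \cite{connelly2010global} used in the spherical case, or argue directly as in Definition~\ref{def:E} and the surrounding discussion that an algebraic map defined over an extension of $\QQ$ sends generic to generic on its image.

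**Main obstacle.** The routine parts (equivalence and genericity) are genuinely routine. The step that needs care is non-congruence, specifically making precise the correspondence ``Minkowski congruences of the coned, un-normalized configuration $\leftrightarrow$ hyperbolic isometries of the normalized configuration.'' One must check that a congruence of $\rho_\HH,\sigma_\HH$ in $\HH^d$ genuinely lifts to a congruence of $\rho,\sigma$ in $\MM^{d+1}$ rather than merely to an equivalence — this requires using that congruence is equivalence over the complete graph (as in Lemma~\ref{lem:pnoncong}) together with the fact that the per-vertex scale $\beta_t$ is recovered from the edge $\{c,t\}$, so the congruence class of the coned framework is determined by the hyperbolic congruence class plus the cone-edge lengths, which both $\rho$ and $\sigma$ share. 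Pinning this down cleanly is the heart of the argument; everything else follows the template of \cite{connelly2010global}.
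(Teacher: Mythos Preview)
Your proposal is correct and follows essentially the same route as the paper: translate so the cone vertex sits at the origin, then rescale each vertex by the (common) positive factor determined by its cone-edge length to land on the hyperbolic locus, and invoke \cite{connelly2010global} for equivalence, non-congruence, and genericity. Your treatment is in fact more explicit than the paper's---you spell out the inner-product bookkeeping for equivalence and the lift of a hyperbolic isometry to a Minkowski congruence for non-congruence, whereas the paper simply cites \cite{connelly2010global} for all three conclusions---and you correctly use the scale factor $(-|\rho(t)-\rho(c)|^2)^{-1/2}$ needed to hit the locus $|\vec v|^2=-1$.
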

\begin{proof}
Given $\rho$ and $\sigma$, we first translate the frameworks,
moving 
the cone vertex, $c$, to the origin in $\MM^{d+1}$. 
Let us call the resulting pair
$\rho'$ and $\sigma'$.
For each vertex $t\in \Verts(\Gamma)$, we then divide all of the
$d+1$ coordinates of $\rho'(t)$ and $\sigma'(t)$ by the positive
quantity,
$-|\rho(t)-\rho(c)|^2$ (which is the same as $-|\sigma(t)-\sigma(c)|^2$).
Let us call the resulting pair, $\rho''$ and $\sigma''$.
Due to our upper coned
assumption, these vertices all lie on the hyperbolic locus
and correspond to a pair of frameworks $\rho_\HH$ and $\sigma_\HH$ 
of $\Gamma$ in $\HH^d$.
As in~\cite{connelly2010global},
the resulting frameworks, $\rho_\HH$ and $\sigma_\HH$,
of $\Gamma$ are equivalent, non congruent,
and generic
in $\HH^d$.
\end{proof}

In order to ultimately get upper coned Minkowski frameworks, we
also define the following special framework classes.

\begin{definition}
We say that a framework $\rho$ of $\Gamma * \{c\}$ in $\EE^{d+1}$
is  \emph{spiky} if 
for one vertex
$t_0 \in \Verts(\Gamma)$, we have 
$|\rho(t_0)-\rho(c)| > 2$ 
and
for all edges
$(t,u) \in \Edges(\Gamma)$, we have 
$|\rho(t)-\rho(u)| < \frac{1}{v}$.
\end{definition}

\begin{definition}
We say that a framework $\rho$ of $\Gamma * \{c\}$ in $\FF^{d+1}$
is  \emph{upper cylindrical} if 
for all vertices 
$t \in \Verts(\Gamma)$, we have 
$(\rho(t)-\rho(c))_1 > 1$ 
and 
$\sum_{i=2}^{d+1}(\rho(t)-\rho(c))_i^2 < 1$.
\end{definition}

\begin{lemma}
\label{lem:sToU}
Let $\Gamma$ be connected.
If a framework $\rho$ of
$\Gamma * \{c\}$ in $\EE^{d+1}$
is  spiky, then it is related by rotation to a framework 
which is upper cylindrical.
\end{lemma}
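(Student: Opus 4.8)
The plan is to show that a spiky framework can be rotated and translated so that the single long edge $\{c,t_0\}$ points in the first coordinate direction and is long enough, while the cluster of the remaining vertices of $\Gamma$ stays tightly bunched near $t_0$, hence well inside the required cylinder. The hypothesis gives us one vertex $t_0$ with $|\rho(t_0)-\rho(c)| > 2$ and all edges of $\Gamma$ of length $< 1/v$; connectedness of $\Gamma$ is what propagates the smallness of edge lengths into smallness of the diameter of the whole vertex set $\Verts(\Gamma)$.

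First I would apply a Euclidean congruence (a translation taking $\rho(c)$ to the origin, followed by a rotation) so that $\rho(c) = 0$ and $\rho(t_0)$ lies on the positive first coordinate axis; call the result $\rho$ again. Then $(\rho(t_0)-\rho(c))_1 = |\rho(t_0)| > 2 > 1$, so $t_0$ itself satisfies the upper-cylindrical condition with room to spare (its other coordinates are $0$, so $\sum_{i\ge 2}(\rho(t_0)-\rho(c))_i^2 = 0 < 1$). Next I would bound the distance from any vertex $t \in \Verts(\Gamma)$ to $t_0$: since $\Gamma$ is connected, there is a path in $\Gamma$ from $t_0$ to $t$ of length at most $v-1$ edges, and each edge has length $< 1/v$, so by the triangle inequality $|\rho(t) - \rho(t_0)| < (v-1)/v < 1$.

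From this single estimate both cylindrical conditions follow for every $t$. For the radial part, $\sum_{i=2}^{d+1}(\rho(t)-\rho(c))_i^2 = |\rho(t)-\rho(t_0)|^2 - \bigl((\rho(t)-\rho(t_0))_1\bigr)^2 \le |\rho(t)-\rho(t_0)|^2 < 1$, using that $\rho(t_0)-\rho(c)$ has all coordinates past the first equal to zero. For the axial part, $(\rho(t)-\rho(c))_1 = (\rho(t_0)-\rho(c))_1 + (\rho(t)-\rho(t_0))_1 > 2 - |\rho(t)-\rho(t_0)| > 2 - 1 = 1$. Hence the congruent copy we produced is upper cylindrical, which is exactly the claim.

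The only mild subtlety — and the place I'd be most careful — is the path-length bound: a simple path in $\Gamma$ visiting vertices among the $v$ vertices of $\Gamma$ uses at most $v-1$ edges, so the accumulated length is strictly less than $(v-1)\cdot(1/v) < 1$; the strict inequality on each edge is what makes the final radial bound strict, as the definition of upper cylindrical requires. Everything else is a one-line application of the triangle inequality and the Pythagorean decomposition of a vector in $\EE^{d+1}$ into its first coordinate and the rest, so I expect no real obstacle here; this is a warm-up lemma whose role is to feed spiky Euclidean frameworks into the upper-cylindrical (and then, via later lemmas, upper-coned Minkowski) pipeline.
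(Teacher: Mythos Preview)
Your argument is correct and follows the same approach as the paper: rotate so that $\rho(t_0)-\rho(c)$ lies on the positive first axis, use connectedness of $\Gamma$ to bound the graph-theoretic diameter, and then apply the triangle inequality to get both the axial and radial estimates. In fact your write-up is more careful than the paper's own, which simply says ``it has diameter no larger than $v$'' and ``from the triangle inequality, all of the coordinates of all of the vertices must satisfy the upper cylindrical conditions,'' whereas you make the $(v-1)/v<1$ bound and the coordinate decomposition explicit.
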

\begin{proof}
We can find a rotation that moves $\rho(t_0)-\rho(c)$ onto the first axis,
with a first coordinate greater than $2$.
Since $\Gamma$ is connected, it has diameter no larger than $v$.
From the triangle inequality, all of the coordinates of all of the
vertices must satisfy the upper cylindrical conditions.
\end{proof}

\begin{lemma}
\label{lem:eToM}
Let $\rho$ and $\sigma$ be two upper cylindrical frameworks 
of $\Gamma * \{c\}$ in $\EE^{d+1}$. 
Then the resulting frameworks
from the Pogorelov map to $\MM^{d+1}$,
$(\tilde{\rho},\tilde{\sigma}) := P(\rho,\sigma)$, are both upper cylindrical.
\end{lemma}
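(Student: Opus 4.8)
The plan is to track what the Pogorelov map $P$ does to each of the $d+1$ coordinates, using the ``coordinate swapping'' description from Remark~\ref{rem:cswap}. Recall that $\MM^{d+1}$ has signature $s=1$, with only the \emph{first} coordinate being the ``negated'' one. So by Remark~\ref{rem:cswap}, $\tilde\rho$ agrees with $\rho$ in its first coordinate and with $\sigma$ in coordinates $2,\dots,d+1$; symmetrically $\tilde\sigma$ agrees with $\sigma$ in the first coordinate and with $\rho$ in the rest. (One should first note that $P$ commutes with the common translation that moves $\rho(c)$ and $\sigma(c)$ to the origin, so we may compute the coordinates of $\rho(t)-\rho(c)$ etc.\ directly; indeed $P$ is affine-equivariant since it is built from the linear maps $H_\CC$, $S_\CC$.)

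With that dictionary, the two conditions defining ``upper cylindrical'' are checked separately. First I would verify the first-coordinate condition: $(\tilde\rho(t)-\tilde\rho(c))_1 = (\rho(t)-\rho(c))_1 > 1$ because $\rho$ is upper cylindrical, and likewise $(\tilde\sigma(t)-\tilde\sigma(c))_1 = (\sigma(t)-\sigma(c))_1 > 1$. Second, the transverse condition: for coordinates $i=2,\dots,d+1$ we have $(\tilde\rho(t)-\tilde\rho(c))_i = (\sigma(t)-\sigma(c))_i$, so
\[
\sum_{i=2}^{d+1}(\tilde\rho(t)-\tilde\rho(c))_i^2 = \sum_{i=2}^{d+1}(\sigma(t)-\sigma(c))_i^2 < 1
\]
since $\sigma$ is upper cylindrical; symmetrically $\sum_{i=2}^{d+1}(\tilde\sigma(t)-\tilde\sigma(c))_i^2 = \sum_{i=2}^{d+1}(\rho(t)-\rho(c))_i^2 < 1$. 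Hence both $\tilde\rho$ and $\tilde\sigma$ satisfy both upper cylindrical conditions for every $t \in \Verts(\Gamma)$.

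I don't expect a genuine obstacle here — this lemma is essentially bookkeeping. The only point requiring a moment's care is making explicit that $\MM^{d+1}$ uses signature $s=1$ on the \emph{first} coordinate (so that the ``swapped'' coordinates are exactly the transverse ones $2,\dots,d+1$, matching the shape of the upper cylindrical definition), and that the cone vertex $c$ is treated on the same footing as the others so that the differences $\rho(t)-\rho(c)$ transform coordinatewise under the swap. Once the coordinate dictionary from Remark~\ref{rem:cswap} is in place, the inequalities transfer verbatim.
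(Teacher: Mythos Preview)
Your proposal is correct and follows exactly the paper's approach: the paper's proof is a single line invoking the coordinate-swapping description of $P$ from Remark~\ref{rem:cswap}, and you have simply spelled out that bookkeeping in detail. (Note that whichever way the swap actually goes between $\rho$ and $\sigma$ is immaterial here, since both inputs are assumed upper cylindrical.)
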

\begin{proof}
This follows from directly the ``coordinate swapping'' interpretation
of the Pogorelov map from Remark~\ref{rem:cswap}.
\end{proof}

\begin{lemma}
\label{lem:uToC}
If a framework $\rho$ of
$\Gamma * \{c\}$ in $\MM^{d+1}$
is  upper cylindrical, then it is upper coned.
\end{lemma}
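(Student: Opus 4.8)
The goal is to show that an upper cylindrical framework $\rho$ of $\Gamma * \{c\}$ in $\MM^{d+1}$ is upper coned, i.e.\ that for every $t \in \Verts(\Gamma)$ we have both $|\rho(t)-\rho(c)|^2 < 0$ and $(\rho(t)-\rho(c))_1 > 0$ under the Minkowski metric. This is a direct computation from the definitions, so my plan is simply to unwind both conditions rather than invoke any of the machinery developed earlier.

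First I would fix a vertex $t \in \Verts(\Gamma)$ and write $\vec{w} := \rho(t)-\rho(c) \in \MM^{d+1}$. The upper cylindrical hypothesis says exactly that $\vec{w}_1 > 1$ and $\sum_{i=2}^{d+1} \vec{w}_i^2 < 1$. The second condition of ``upper coned'' is then immediate: $(\rho(t)-\rho(c))_1 = \vec{w}_1 > 1 > 0$. For the first condition, recall that $\MM^{d+1}$ is the pseudo Euclidean space with signature $s=1$, so the first coordinate is the one that is subtracted; hence $|\vec{w}|^2 = -\vec{w}_1^2 + \sum_{i=2}^{d+1}\vec{w}_i^2$. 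Using the two inequalities, $|\vec{w}|^2 < -1 + 1 = 0$, which is precisely $|\rho(t)-\rho(c)|^2 < 0$. Since $t$ was arbitrary, both defining conditions of ``upper coned'' hold for all $t \in \Verts(\Gamma)$, and we are done.

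There is essentially no obstacle here; the only thing to be careful about is the bookkeeping convention that in $\MM^{d+1}$ the single negative direction is the first coordinate (consistent with the earlier definition of the hyperbolic locus as $|\vec v|^2 = -1$, $\vec v_1 > 0$), so that the cylinder's axial direction $\vec w_1$ is exactly the timelike direction and the transverse directions are spacelike. Once that is pinned down, the inequality $\vec w_1 > 1$ dominates the bounded spacelike part and forces the vector to be timelike and future-pointing. I would present this as a two-line argument.

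\begin{proof}
Fix $t \in \Verts(\Gamma)$ and set $\vec{w} := \rho(t)-\rho(c)$. Since $\rho$ is upper cylindrical, $\vec{w}_1 > 1$ and $\sum_{i=2}^{d+1} \vec{w}_i^2 < 1$. In $\MM^{d+1}$ the signature is $s=1$, with the first coordinate subtracted, so
\begin{equation}
|\vec{w}|^2 = -\vec{w}_1^2 + \sum_{i=2}^{d+1} \vec{w}_i^2 < -1 + 1 = 0.
\end{equation}
Also $(\rho(t)-\rho(c))_1 = \vec{w}_1 > 1 > 0$. As $t$ was arbitrary, both conditions in the definition of ``upper coned'' are satisfied, so $\rho$ is upper coned.
\end{proof}
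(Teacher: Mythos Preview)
Your proof is correct and follows essentially the same approach as the paper's: both verify the sign of the first coordinate directly from the upper cylindrical hypothesis and then compute the Minkowski squared length $|\rho(t)-\rho(c)|^2 = -(\rho(t)-\rho(c))_1^2 + \sum_{i=2}^{d+1}(\rho(t)-\rho(c))_i^2 < 0$ from the two inequalities. Your version is slightly more explicit about the intermediate bounds ($>1>0$ and $<-1+1=0$), but the argument is identical.
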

\begin{proof}
By definition, the first coordinates of all vertices have the 
required sign. 
Moreover, for any $t \in \Verts(\Gamma)$, 
\begin{eqnarray}
|\rho(t)-\rho(c))|^2 
=
-(\rho(t)-\rho(c)))_1^2 + 
\sum_{i=2}^{d+1}(\rho(t)-\rho(c)))_i^2 
< 
0 
\end{eqnarray}
And thus it is upper coned.
\end{proof}

With these simple facts established, we can now apply the 
machinery from Section~\ref{sec:sudo2} to the problem at hand.

\begin{lemma}
\label{lem:Mtfer}
Let 
$\Gamma * \{c\}$ 
be generically locally rigid in $\EE^{d+1}$. 
Suppose 
$\Gamma * \{c\}$ 
is not GGR in $\EE^{d+1}$, 
then $\Gamma * \{c\}$  
has an pair of generic frameworks in $\MM^{d+1}$,
that are
equivalent, non congruent, and upper coned. 
\end{lemma}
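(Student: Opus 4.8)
The plan is to exploit the machinery already set up for Theorem~\ref{thm:sudo}, but now track the extra geometric conditions (spiky / upper cylindrical / upper coned) through the Pogorelov map. Since $\Gamma*\{c\}$ is not GGR in $\EE^{d+1}$, there is a generic framework $\rho$ of $\Gamma*\{c\}$ in $\EE^{d+1}$ with an equivalent, non-congruent framework $\sigma$. The first step is to argue that, up to congruence, we may take such a pair to be \emph{spiky}: because the defining inequalities for spiky frameworks cut out a nonempty open subset of $C_{\EE^{d+1}}(\Gamma*\{c\})$ (pick the cone vertex and all of $\Verts(\Gamma)$ clustered within a ball of radius $\tfrac1{2v}$, then move one vertex $t_0$ far away — note $\Gamma$ connected so $\Gamma*\{c\}$ is too, and all edges of $\Gamma$ keep their endpoints close while $|\rho(t_0)-\rho(c)|>2$ via the $c$–$t_0$ edge), genericity and non-GGR are compatible with spikiness. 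Actually the cleaner route, matching the style of Section~\ref{sec:sudo2}, is to work with the component $E$ of the equivalent-pairs variety having $\dim \pi_1(E) = (v+1)(d+1)$; a generic $(\rho,\sigma)\in E$ is then a generic non-congruent equivalent pair, and since spikiness of $\rho$ is an open condition on $\pi_1(E)$ (which is dense in $C_{\EE^{d+1}}(\Gamma*\{c\})$), we may choose the generic point $e=(\rho,\sigma)\in E$ so that $\rho$ is spiky.

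Next I would run the reductions: by Lemma~\ref{lem:sToU}, $\rho$ is congruent to an upper cylindrical framework $\rho^\star$; by Lemma~\ref{lem:pnoncong} the congruent replacement carries through so $(\rho^\star,\sigma)$ is still an equivalent non-congruent pair, and $\rho^\star$ remains generic. The issue is that $\sigma$ need not be upper cylindrical. Here I would apply the same trick to $\sigma$ — but we must be careful that $\rho^\star$ and $\sigma^\star$ are simultaneously controllable. The key observation: we do not actually need $\sigma$ to be spiky a priori; instead, having fixed the pair in $E$, I would note that $\pi_2(E)$ is also dense in $C_{\EE^{d+1}}(\Gamma*\{c\})$ (Corollary~\ref{cor:pi2} analogue), and by Lemma~\ref{lem:dimE} the fiber over $\rho^\star$ in $E$ contains the entire congruence class of $\sigma$. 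So we may replace $\sigma$ by any element $\sigma^c$ of $[\sigma]$ while staying in $E$; in particular, since $\sigma$ is locally rigid with full affine span (from Lemma~\ref{lem:equivgen}), and equivalence forces $\sigma$ to have the same small edge lengths and the same long $c$–$t_0$ length as $\rho^\star$, the framework $\sigma$ is itself spiky — its $\Gamma$-edges are short and $|\sigma(t_0)-\sigma(c)| = |\rho^\star(t_0)-\rho^\star(c)| > 2$. Hence by Lemma~\ref{lem:sToU} we may move $\sigma$ to an upper cylindrical $\sigma^\star$, and by Lemma~\ref{lem:pnoncong} again $(\rho^\star,\sigma^\star)$ is an equivalent, non-congruent pair, both upper cylindrical, and generic (congruences preserve genericity).

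Finally, apply the Pogorelov map: by Theorem~\ref{thm:pog}, $(\tilde\rho,\tilde\sigma) := P(\rho^\star,\sigma^\star)$ is an equivalent pair of frameworks of $\Gamma*\{c\}$ in $\MM^{d+1}$; by Lemma~\ref{lem:gpnoncong} (applied to the component $E$ through $(\rho^\star,\sigma^\star)$, using that this pair is generic in $E$) it is non-congruent; by Lemma~\ref{lem:eToM} both $\tilde\rho$ and $\tilde\sigma$ are upper cylindrical, hence by Lemma~\ref{lem:uToC} both are upper coned; and by the genericity argument of Corollary~\ref{cor:auto} (the complex Pogorelov map $P_\CC$ is an automorphism of the relevant complexified component) $\tilde\rho$ and $\tilde\sigma$ are generic in $C_{\MM^{d+1}}(\Gamma*\{c\})$. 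This is exactly the asserted pair. The main obstacle I anticipate is the bookkeeping in the middle step — ensuring that after moving $\rho$ to be upper cylindrical, $\sigma$ can \emph{also} be arranged upper cylindrical while keeping the pair inside the good component $E$ so that non-congruence and genericity survive; the point that equivalence forces $\sigma$ to inherit spikiness from $\rho$ (short edges and one long strut) is what makes this go through, together with Lemma~\ref{lem:dimE}'s statement that whole congruence classes of the second coordinate stay in $E$.
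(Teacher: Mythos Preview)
Your approach mirrors the paper's closely, and the key insight---that spikiness depends only on edge lengths, so $\sigma$ inherits it from $\rho$, after which both can be rotated to upper cylindrical positions---is exactly right. There is, however, one genuine gap: after replacing $(\rho,\sigma)$ by the upper cylindrical pair $(\rho^\star,\sigma^\star)$, you assert that this new pair is still generic in $E$, citing ``congruences preserve genericity.'' But the congruences produced by Lemma~\ref{lem:sToU} are not fixed isometries with rational coefficients; each is a rotation chosen depending on the framework (moving $\rho(t_0)-\rho(c)$ onto the first axis). So there is no obvious reason the resulting pair is generic in $E$, and you need that genericity both to invoke Lemma~\ref{lem:gpnoncong} (non-congruence after Pogorelov) and Corollary~\ref{cor:auto} (genericity of the Pogorelov output).

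The paper's fix is simple and you should adopt it: once $(\rho^\star,\sigma^\star)$ lies in $E$ (which, as you note, follows from Lemma~\ref{lem:dimE} together with its symmetric version for the first coordinate), perturb it slightly within $E$ to a nearby \emph{generic} point of $E$. Upper cylindricality is an open condition, so the perturbed pair remains upper cylindrical, and now genericity is in hand for the remaining steps. A minor aside: your citation of Lemma~\ref{lem:pnoncong} when passing from $\rho$ to $\rho^\star$ is off---that lemma concerns the Pogorelov map, not replacing a coordinate by a congruent framework; the fact you want there is immediate from the definitions of equivalence and congruence.
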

\begin{proof}
The proof follows that of Section~\ref{sec:sudo2}. The only issue
is ensuring the upper coned-ness 
of the result.

When picking the component $E$ (see Definition~\ref{def:E}) 
we choose a component of $E^+$ such that 
$E$ contains some non-congruent pair, 
$\dim(\pi_1(E))= v*d$, 
and such that 
$\pi_1(E)$ contains a framework $\rho$
that is spiky.

Since the set of frameworks
that are spiky is of dimension $v*d$, 
and by assumption, 
$\Gamma * \{c\}$ 
is not GGR in $\EE^{d+1}$,  and thus GGF in $\EE^{d+1}$, 
the projection
$\pi_1(E^+\backslash C^+)$ must include a set of spiky 
frameworks with dimension $v*d$.
Thus, at least one component with the stated properties must
exist. We will chose one such component and  will call it $E$.

Pick an $e := (\rho,\sigma) \in E$ in the fiber above $\rho$.
Since $\rho$ is spiky, and spikiness
only depends on edge lengths, $\sigma$ must be spiky as well. 
Next, we perturb $e$ in $E$ to get  $e' =: (\rho',\sigma')$ that is
generic in $E$. 
Since spikiness is an  open 
property, for small enough perturbations, both
$\rho'$ and $\sigma'$ will still be spiky.

Since 
$\Gamma * \{c\}$ 
is generically locally rigid in $\EE^{d+1}$,
$\Gamma$ must be connected.
Thus from Lemma~\ref{lem:sToU}, we can choose 
an upper cylindrical $\sigma'^c$ 
that is 
related to $\sigma'$ by rotation and translation 
as well as 
an upper cylindrical  $\rho'^c$ 
related to $\rho'$ by rotation.
From Lemma~\ref{lem:dimE}, since $e'$ is generic in $E$
the point $e'^c := (\rho'^c,\sigma'^c)$ 
must be in $E$ as well. 

Next we perturb $e'^c$ within $E$ to get $e^{\prime c\prime}
 =: (\rho^{\prime c\prime},\sigma^{\prime c\prime})$ 
which is generic in $E$.
Since upper cylindricality is an   open 
property, for small enough perturbations, both
$\rho^{\prime c\prime}$  
and $\sigma^{\prime c\prime}$ will still be upper cylindrical.

Now when we apply the Pogorelov map,
$(\widetilde{\rho^{\prime c\prime}},\widetilde{\sigma^{\prime c\prime}}) :=
P(e^{\prime c\prime})$.
As in the proof of Theorem~\ref{thm:sudo},
$\widetilde{\rho^{\prime c\prime}}$ and $\widetilde{\sigma^{\prime c\prime}}$
are equivalent, non
congruent and generic frameworks in 
$\MM^{d+1}$.
From Lemma~\ref{lem:eToM}
both 
$\widetilde{\rho^{\prime c\prime}}$ and $\widetilde{\sigma^{\prime c\prime}}$ 
must be upper cylindrical,
and from Lemma~\ref{lem:uToC},
both 
$\widetilde{\rho^{\prime c\prime}}$ 
and $\widetilde{\sigma^{\prime c\prime}}$ must be upper coned,
\end{proof}

\begin{proof}[Proof of corollary <=]
Suppose a graph $\Gamma$ is not GGR in $\EE^d$ 
then from Theorem~\ref{thm:cone},
$\Gamma * \{c\}$ 
is not GGR in $\EE^{d+1}$. 
Then from Lemma~\ref{lem:Mtfer},
$\Gamma * \{c\}$  
has an pair of generic frameworks in $\MM^{d+1}$
that are
equivalent, non congruent, and upper coned. 
Then from Lemma~\ref{lem:Htfer2},
$\Gamma$ 
is not GGR in $\HH^{d}$. 
\end{proof}

\begin{remark}
In Section 7 of~\cite{connelly2010global}, there is a brief sketch
describing how to directly use a Pogorelov type map to equate
Euclidean GGR and hyperbolic GGR. That discussion does not go into the
details showing that their construction hits an open neighborhood
of frameworks (ie. a generic framework), 
which is the main technical contribution of our
Theorem~\ref{thm:sudo}.
\end{remark}

\subsection{Hyperbolic GGF}

Using coning, 
we can also  prove a hyperbolic version of 
Theorem~\ref{thm:sudoGP}, namely:

\begin{corollary}
If a graph $\Gamma$ 
is not GGR in 
$\HH^d$, and it has 
a GGR subgraph $\Gamma_0$ with $d+1$ 
or more vertices, then $\Gamma$ 
must be GGF in $\HH^d$.
\end{corollary}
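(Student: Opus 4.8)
The plan is to mimic the coning strategy already used for Corollary~\ref{cor:hyper}, but now transferring the GGF conclusion instead of the GGR equivalence. The key observation is that coning behaves well with respect to GGR subgraphs: if $\Gamma$ has a GGR subgraph $\Gamma_0$ with $v_0 \geq d+1$ vertices, then $\Gamma * \{c\}$ has the subgraph $\Gamma_0 * \{c\}$, which by Theorem~\ref{thm:cone} is GGR in $\EE^{d+1}$, hence (by Theorem~\ref{thm:sudo}) GGR in $\MM^{d+1}$; moreover $\Gamma_0 * \{c\}$ has $v_0 + 1 \geq d+2 = (d+1)+1$ vertices, so it is large enough to apply Theorem~\ref{thm:sudoGP} in dimension $d+1$.

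First I would set up: assume $\Gamma$ is not GGR in $\HH^d$. As usual we may assume $\Gamma$ is generically locally rigid in $\HH^d$, since otherwise it is trivially GGF. By the coning dictionary (Lemma~\ref{lem:Htfer1}, together with the fact that coning preserves generic local rigidity in the appropriate way), $\Gamma * \{c\}$ is not GGR in $\MM^{d+1}$, and $\Gamma * \{c\}$ is generically locally rigid there. Now apply Theorem~\ref{thm:sudoGP} in dimension $d+1$ to the pair $(\Gamma * \{c\}, \Gamma_0 * \{c\})$: since $\Gamma_0 * \{c\}$ is a GGR subgraph of $\Gamma * \{c\}$ with at least $(d+1)+1$ vertices, and $\Gamma * \{c\}$ is not GGR in $\MM^{d+1}$, we conclude $\Gamma * \{c\}$ is GGF in $\MM^{d+1}$.

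Next, I would transfer GGF back down to $\HH^d$. A generic framework $\rho$ of $\Gamma$ in $\HH^d$ cones up to a generic framework of $\Gamma * \{c\}$ in $\MM^{d+1}$ (after the scaling/translation of Lemma~\ref{lem:Htfer1}), which by the previous paragraph is globally flexible: it has an equivalent, non-congruent partner. The point of the upper-coned machinery (Lemmas~\ref{lem:Mtfer} and~\ref{lem:Htfer2}) is precisely to bring such a flexible Minkowski pair back onto the hyperbolic locus. So one runs the argument of Lemma~\ref{lem:Mtfer} to produce a generic, equivalent, non-congruent, upper-coned pair for $\Gamma * \{c\}$ in $\MM^{d+1}$ — here I would note that the presence of the GGR subgraph is not needed for that step, only non-GGR-ness, which we have — and then apply Lemma~\ref{lem:Htfer2} to descend to an equivalent, non-congruent, generic pair for $\Gamma$ in $\HH^d$. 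Since this works for a generic framework, $\Gamma$ is GGF in $\HH^d$.

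The main obstacle I anticipate is a bookkeeping subtlety rather than a deep point: one must check that the subgraph $\Gamma_0 * \{c\}$ really is GGR in $\EE^{d+1}$ (this is exactly Theorem~\ref{thm:cone} applied to $\Gamma_0$, so it is immediate) and that the vertex count $v_0 + 1 \geq d+2$ is the correct threshold for Theorem~\ref{thm:sudoGP} in dimension $d+1$ (the threshold there is ``$d'+1$ or more vertices'' in dimension $d'$, so we need $\geq (d+1)+1 = d+2$, and $v_0 \geq d+1$ gives $v_0 + 1 \geq d+2$ — it works, with no room to spare). One should also make sure that ``$\Gamma * \{c\}$ not GGR in $\MM^{d+1}$'' genuinely follows from ``$\Gamma$ not GGR in $\HH^d$'': this is the contrapositive packaging of Lemma~\ref{lem:Htfer1}, and the only care needed is the generic local rigidity hypothesis required to invoke Theorem~\ref{thm:sudoGP}, which follows because a graph with a GGR (hence locally rigid) spanning-dimension subgraph plus a cone vertex is generically locally rigid. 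Everything else is a direct chase through the implications already assembled in this section.
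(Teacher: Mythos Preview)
Your first two paragraphs are correct and match the paper exactly: cone up, observe that $\Gamma_0 * \{c\}$ is GGR in $\EE^{d+1}$ (hence in $\MM^{d+1}$) with $\geq (d+1)+1$ vertices, and apply Theorem~\ref{thm:sudoGP} to conclude $\Gamma * \{c\}$ is GGF in $\MM^{d+1}$.

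The gap is in your descent step. You correctly start with an \emph{arbitrary} generic $\rho$ in $\HH^d$, cone it up to a generic $\rho_\MM''$ in $\MM^{d+1}$ (which is upper coned by construction), and use GGF to obtain an equivalent, non-congruent $\sigma_\MM''$. But to invoke Lemma~\ref{lem:Htfer2} you need \emph{both} frameworks to be upper coned, and nothing you have cited guarantees this for $\sigma_\MM''$. Your appeal to Lemma~\ref{lem:Mtfer} does not help: that lemma starts from non-GGR in $\EE^{d+1}$ and uses a Pogorelov construction to manufacture \emph{some} upper-coned pair in $\MM^{d+1}$; it does not take your specific $\rho_\MM''$ as input, so at best it would show that \emph{one} generic hyperbolic framework is flexible, which is just ``not GGR'' again, not GGF.

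The missing ingredient is a separate argument (the paper supplies it as Lemma~\ref{lem:orthoCone}) showing that if $\rho_\MM''$ is upper coned and $\sigma_\MM''$ is equivalent to it, then $\sigma_\MM''$ is either upper coned or lower coned (and in the latter case one negates all coordinates). The idea is that for any edge $\{t,u\}$ of $\Gamma$, the triangle $\{t,u,c\}$ has all three edge lengths matched, so the two restricted frameworks are congruent; since $\rho_\MM''$ is in general position this congruence is strong, i.e.\ realized by an element of the orthogonal group of $\MM^{d+1}$; and such an element either preserves the upper light cone or swaps it with the lower one. Connectedness of $\Gamma$ then propagates this choice to all vertices. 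Without this step your proof does not close.
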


\begin{proof}
Having established that generic global rigidity transfers between
Pseudo Euclidean spaces and through coning, we know that
$\Gamma * \{c\}$, is not GGR in $\MM^{d+1}$.
Likewise, 
it has a coned subgraph with at least $d+2$ vertices,
$\Gamma_0 * \{c\}$, that is  GGR in $\MM^{d+1}$.
Thus, from Theorem~\ref{thm:sudoGP}, 
$\Gamma * \{c\}$  
must be GGF in  
$\MM^{d+1}$.

Let $\rho$ be a framework of $\Gamma$ in $\HH^d$.
We model this as
$\rho_\MM$, a
framework of $\Gamma * \{c\}$ in $\MM^{d+1}$, with the cone vertex
$c$ at the origin and the rest of the vertices on the hyperbolic locus.
For each vertex $t\in \Verts(\Gamma)$,
we pick a generic positive scale $\alpha_t$ and multiply all of the
$d+1$
coordinates of $\rho_\MM(t)$ by this $\alpha_t$.
Let us call the resulting framework
$\rho_\MM'(t)$.
By translating this frameworks
by some generic offset, we obtain
$\rho_\MM''$, 
a generic framework of the coned graph in $\MM^{d+1}$.
Since the $\alpha_t$ are all positive, 
$\rho_\MM''$ 
must be upper coned.

Since $\Gamma * \{c\}$  
is GGF in  
$\MM^{d+1}$, 
$\rho_\MM''$  must have an equivalent and non-congruent framework,
$\sigma_\MM''$.
From Lemma~\ref{lem:orthoCone} (below), 
we can choose $\sigma_\MM''$
to be upper coned.
Then from Lemma~\ref{lem:Htfer2}, 
there must be 
a framework, $\sigma$, 
of $\Gamma$ in $\HH^d$, 
that is equivalent
and non congruent to $\rho$.
\end{proof}

\begin{lemma}
\label{lem:orthoCone}
Let $\Gamma$ be a connected graph.
Let $(\rho,\sigma)$ be a pair of equivalent frameworks of
 $\Gamma * \{c\}$  
in  
$\MM^{d+1}$. Let us also assume that $\rho$ is in general position.
If $\rho$ is upper coned, then either $\sigma$ 
is upper coned or it is lower coned.
\end{lemma}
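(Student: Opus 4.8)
The plan is to show that $\sigma$ cannot have its cone-to-vertex vectors split between the two sheets of the hyperbolic locus, nor can any of them be timelike-null or spacelike; connectedness will force a consistent sign. First I would translate both frameworks so that $\rho(c)$ and $\sigma(c)$ sit at the origin of $\MM^{d+1}$, which is harmless since the statement is about congruence classes. For each vertex $t \in \Verts(\Gamma)$ write $\vec r_t := \rho(t)-\rho(c)$ and $\vec s_t := \sigma(t)-\sigma(c)$. Since $\rho$ and $\sigma$ are equivalent, the edges of $\Gamma * \{c\}$ give $|\vec s_t|^2 = |\vec r_t|^2 < 0$ for every $t$ (the cone edges), and $|\vec s_t - \vec s_u|^2 = |\vec r_t - \vec r_u|^2$ for every edge $\{t,u\} \in \Edges(\Gamma)$. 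So each $\vec s_t$ is already timelike; the only thing to pin down is the sign of its first coordinate $(\vec s_t)_1$, which cannot vanish since a timelike vector in $\MM^{d+1}$ must have nonzero time component.

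The key step is a two-vertex lemma: if $\{t,u\}$ is an edge of $\Gamma$, then $(\vec s_t)_1$ and $(\vec s_u)_1$ have the same sign. To see this, polarize the three length constraints involving $t$, $u$: from $|\vec s_t|^2$, $|\vec s_u|^2$, and $|\vec s_t - \vec s_u|^2$ all being determined we recover the Minkowski inner product $\langle \vec s_t, \vec s_u\rangle = \langle \vec r_t, \vec r_u\rangle$, and likewise $\langle \vec r_t, \vec r_u \rangle$ is known from $\rho$. Now both $\vec r_t$ and $\vec r_u$ are timelike with positive first coordinate (upper coned), so they lie in the same component of the timelike cone; this forces $\langle \vec r_t, \vec r_u\rangle < 0$ (the standard fact that the Minkowski inner product of two future-pointing timelike vectors is negative). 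Hence $\langle \vec s_t, \vec s_u \rangle < 0$ as well, and two timelike vectors with negative inner product must lie in the same component of the timelike cone — i.e. $(\vec s_t)_1$ and $(\vec s_u)_1$ have the same sign. Since $\Gamma$ is connected, propagating this equality along a spanning tree shows that all the $(\vec s_t)_1$ share one common sign $\epsilon \in \{+,-\}$. If $\epsilon = +$ then $\sigma$ is upper coned; if $\epsilon = -$ then $\sigma$ is lower coned. (The general position hypothesis on $\rho$ is not strictly needed for this sign argument, but is harmless; it guarantees in particular that the cone edges behave nicely and that no degeneracies obstruct the polarization.)

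The main obstacle I anticipate is cleanly justifying the inner-product sign facts for timelike vectors in the indefinite signature $\MM^{d+1}$ — in particular that two timelike vectors have negative Minkowski inner product exactly when they lie in the same cone component — without dragging in more Lorentzian geometry than the paper wants. The clean way is to reduce to the $2$-dimensional subspace they span: if $\vec v, \vec w$ are both timelike, $\spanop\{\vec v,\vec w\}$ (when they are independent) is a plane on which the restricted form has signature $(1,1)$, so one can choose coordinates in which $\vec v = (a, 0)$, $\vec w = (b\cosh\theta, b\sinh\theta)$ with $a,b \neq 0$, giving $\langle \vec v, \vec w \rangle = -ab\cosh\theta$, whose sign is $-\sign(ab)$; and the cone-component condition is precisely $\sign(a) = \sign(b)$. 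If $\vec v,\vec w$ are dependent the claim is trivial. Everything else — the translation normalization, recovering inner products from squared lengths by polarization, and the spanning-tree propagation — is routine and I would state it briefly rather than belabor it.
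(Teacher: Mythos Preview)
Your argument is correct, and it takes a genuinely different route from the paper's. The paper proceeds by observing that for each edge $\{t,u\}$ of $\Gamma$, the triangle $\{t,u,c\}$ gives a pair of congruent sub-frameworks; it then invokes the general position hypothesis together with Corollary~\ref{cor:cong3} to upgrade congruence to \emph{strong} congruence, obtaining an orthogonal transform of $\MM^{d+1}$ carrying $(\vec r_t,\vec r_u)$ to $(\vec s_t,\vec s_u)$, and finally uses that an orthogonal transform either preserves or swaps the two timelike cones. You instead polarize directly to get $\langle \vec s_t,\vec s_u\rangle = \langle \vec r_t,\vec r_u\rangle$ and then apply the reverse Cauchy--Schwarz inequality for timelike vectors (two timelike vectors have negative Minkowski inner product iff they lie in the same cone component). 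Your route is more elementary: it avoids the strong-congruence machinery of Appendix~\ref{app:cong} and, as you note, does not actually need $\rho$ to be in general position. The paper's route has the virtue of reusing infrastructure it has already built, but yours is self-contained and arguably cleaner for this particular lemma. Your justification of the inner-product sign fact via the $2$-plane reduction is fine (when the two timelike vectors are independent their span necessarily has signature $(1,1)$, since the orthogonal complement of a timelike vector is positive definite), though the even simpler direct estimate $|\vec v'\cdot \vec w'| \le |\vec v'|\,|\vec w'| < v_1 w_1$ would also do.
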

\begin{proof}
Let $t$ and $u$ be two vertices of $\Verts(\Gamma)$ that are
connected by an edge in $\Gamma$. Along with the edges $\{t,c\}$ 
and $\{u,c\}$, this defines  a triangle $T$, which is a subgraph of 
$\Gamma * \{c\}$.
Since $\sigma$ is equivalent to $\rho$, 
these frameworks when restricted
to $T$, must be, by definition, congruent. 

Since $\rho$ is in general position, from Corollary~\ref{cor:cong3}
these two frameworks 
of $T$ must be strongly congruent.
Thus, there is an orthogonal
transform of $\MM^{d+1}$ mapping
$({\rho(t) - \rho(c)})$ to 
$({\sigma(t) - \sigma(c)})$
and mapping 
$({\rho(u) - \rho(c)})$ to 
$({\sigma(u) - \sigma(c)})$.
An orthogonal transform either maps the entire upper cone to the 
upper cone, or it maps the entire upper cone to the lower cone.
Since $\Gamma$ is connected, this makes $\sigma$ either
upper coned or lower coned. (Moreover, by negating all of the coordinates in 
$\sigma$ we can always obtain an upper coned equivalent framework).
\end{proof}

\section{Algebraic Geometry Background}
\label{app:alg}

We start with some preliminaries 
from real
and complex
algebraic geometry, somewhat specialized to our particular case.  For
a general reference, see, for instance, the
book by Bochnak, Coste, and Roy~\cite{bcr}. Much of this is adapted
from~\cite{GHT10}.
\begin{definition}
  \label{def:generic-gen}
  An affine, real (resp. complex)
\emph{algebraic set} or \emph{variety}~$V$ 
  defined over a field~$\kk$
  contained in~$\RR$ (resp. $\CC$)
is a subset of $\RR^n$ (resp $\CC^n$)
that is
  defined by a set of algebraic equations with coefficients in~$\kk$.

 An algebraic set is closed in the Euclidean topology.

  An algebraic set is \emph{irreducible} if it is not the union of two proper
  algebraic subsets
  defined over $\RR$  (resp $\CC$). 
 Any reducible algebraic set $V$ can be 
uniquely described as the union of 
a finite number of maximal irreducible subsets called
the \emph{components} of $V$.

  A real (resp. complex) algebraic set has a 
real (resp. complex) \emph{dimension}
  $\dim(V)$, which we will define as the largest $t$ for which there
  is an open subset of~$V$, in the Euclidean topology, that is
isomorphic to $\RR^t$
(resp. $\CC^t$).
Any algebraic subset of an irreducible algebraic set must be of 
strictly lower dimension.

  A point~$x$ of an irreducible algebraic set~$V$ is \emph{smooth} 
(in the differential geometric sense)
if it has a
  neighborhood that is
  smoothly isomorphic to $\RR^{\dim(V)}$ (resp. $\CC^{\dim(V)}$).
 (Note that in a real variety,
there may be points
  with neighborhoods isomorphic to $\RR^n$ for some $n < \dim(V)$; we
  will not consider these points to be smooth.)

\end{definition}

\begin{definition}
  Let $\kk$ be a 
subfield of $\RR$.
  A \emph{semi-algebraic set}~$S$ defined over $\kk$ 
  is a subset of $\RR^n$ defined by algebraic
  equalities and inequalities with coefficients in $\kk$; 
  alternatively, it is the image of a real
  algebraic set (defined only by equalities) under an algebraic map
  with coefficients in $\kk$.
  A semi-algebraic set has a well defined (maximal) dimension~$t$.

  The real  \emph{Zariski closure} of $S$ is the smallest
real   algebraic set defined over $\RR$
containing it.  (Loosely speaking, we can get an 
algebraic set by keeping all algebraic
  equalities and dropping  the inequalities. We may 
need to enlarge the  field to cut out  the smallest algebraic set
containing~$S$
but a finite extension will always suffice.)

  We call $S$ \emph{irreducible} if its real Zariski closure is
  irreducible. 
  An irreducible semi-algebraic set $S$ has the same real dimension as its
  real Zariski closure.  

A point on $S$ is smooth
  if it has a neighborhood in $S$ smoothly isomorphic to
  $\RR^{\dim(S)}$. 

\end{definition}

\begin{lemma}
\label{lem:irrIm}
  The image of an irreducible real algebraic or semi-algebraic
set under a polynomial map
is  an irreducible semi-algebraic set.
  The image of an irreducible complex algebraic set under a polynomial map
is  an irreducible constructible set.
\end{lemma}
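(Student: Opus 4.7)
The plan is to treat the real and complex cases in parallel. The key tools are (i) a tameness theorem that keeps the image well-behaved under a polynomial map --- Tarski--Seidenberg on the real side and Chevalley on the complex side, the latter already quoted in Remark~\ref{rem:chevy} --- and (ii) a pullback argument that transports irreducibility from source to target, using the fact that polynomial maps are continuous in the Zariski topology.

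First I would show that the image lies in the right category. For a real algebraic or semi-algebraic set $S$, Tarski--Seidenberg implies that $f(S)$ is semi-algebraic. For a complex algebraic set $V$, Chevalley gives that $f(V)$ is constructible, i.e.\ a finite Boolean combination of complex algebraic sets. Next I would show irreducibility of the Zariski closure $W$ of the image. Let $\bar S$ (resp.\ $V$) denote the Zariski closure of the source; by hypothesis it is irreducible. Suppose $W = A \cup B$ with $A,B$ proper Zariski-closed subsets defined over $\RR$ (resp.\ $\CC$). Then $f^{-1}(A)$ and $f^{-1}(B)$ are Zariski-closed and cover $\bar S$, so irreducibility of $\bar S$ forces one of them, say $f^{-1}(A)$, to equal $\bar S$. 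But then $f(\bar S) \subseteq A$, hence $W = \overline{f(\bar S)} \subseteq A$, contradicting that $A$ is proper. Thus $W$ is irreducible, and in the real case this is exactly what is meant by $f(S)$ being an irreducible semi-algebraic set in the sense of Definition~\ref{def:generic-gen}.

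For the complex case it remains to upgrade ``constructible with irreducible Zariski closure'' to ``algebraic set with a finite number of subvarieties cut out.'' Here I would invoke the standard structure result that a constructible subset $C$ of an irreducible complex variety $W$ whose Zariski closure equals $W$ must contain a Zariski-dense open subset of $W$; equivalently, $W \setminus C$ is contained in a finite union of proper subvarieties of $W$. Applying this with $C := f(V)$ and $W := \overline{f(V)}$ yields the statement as phrased. The only mildly delicate bookkeeping concerns the field of definition (a finite algebraic extension of the base field may be needed to cut out $W$), but this does not affect irreducibility, which Definition~\ref{def:generic-gen} takes over $\RR$ or $\CC$. I do not anticipate a serious obstacle; the pullback argument for irreducibility is the core content, and the constructible-set refinement in the complex case is standard.
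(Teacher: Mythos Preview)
The paper does not supply a proof of this lemma at all; it is stated in Appendix~\ref{app:alg} as a standard background fact and immediately followed by the next definition. Your proposal is a correct and entirely standard argument---Tarski--Seidenberg and Chevalley for the category of the image, the Zariski-continuity pullback for irreducibility of the closure, and the structure of constructible sets inside an irreducible variety for the complex refinement---so there is nothing in the paper to compare it against.
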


We next define genericity in larger generality and give some basic
properties.

\begin{definition}
  A point in
  a  (semi-)algebraic set~$V$ defined over $\kk$, 
a countable subfield  of $\RR$,
  is
  \emph{generic} if its
  coordinates do not satisfy
  any algebraic equation with coefficients in~$\kk$
  besides those that are satisfied by every
  point on~$V$.

  Almost every point in an irreducible (semi) algebraic set $V$ is generic.
\end{definition}

\begin{remark}
\label{rem:ext}
Note that the defining field might change when we take the real Zariski
closure $V$ of a semi-algebraic set $S$.
For example, in $\RR^1$, the single 
point $\sqrt{2}$ can be described using
equalities and inequalities with coefficients in $\QQ$, and thus  it
is semi-algebraic and defined
over $\QQ$. But as a real variety, the defining equation
for this single-point variety
requires coordinates in $\QQ(\sqrt{2})$. Indeed, the smallest variety that 
contains
the point 
$\sqrt{2}$ and that is defined over $\QQ$ 
must also include the point $-\sqrt{2}$.
However, this complication
does not matter for the purposes of genericity.

Specifically, if $\kk$ is a finite algebraic extension of $\QQ$
and $x$ is a generic point in an irreducible semi-algebraic set $S$ defined
over~$\kk$, 
then $x$
 is also generic in $V$,
the real Zariski closure of $S$, defined  over an appropriate field.
This follows from a three step argument. 
First, a dimensionality argument shows that $V$
must be a component of $V^+_\kk$, the smallest
real algebraic variety that is defined over $\kk$ and contains $S$.
Second, it is a standard algebraic fact that 
if a real (resp. complex) variety $W^+$
is defined over $\kk$, a subfield of $\RR$ 
(resp. $\CC$), then 
any of its components
is defined over 
some field~$\kk'$, a subfield of $\RR$ (resp. $\CC$),
which is a finite extension of~$\kk$.
Finally, from Lemma~\ref{lem:ext} (below), 
any non generic 
point $x \in V$ (ie. satisfying 
some algebraic equation with coefficients in~$\kk'$)
must also 
satisfy
some algebraic equation with coefficients in~$\kk$
(or even $\QQ$)
that is non-zero over~$V$.
\end{remark}

\begin{lemma}
\label{lem:ext}
Let $\kk'$ be some algebraic extension of $\QQ$.
Let $V$ be an irreducible algebraic set defined over $\kk'$.
Suppose a point $x\in V$
satisfies
an algebraic equation $\phi$ with coefficients in~$\kk'$
that is non-zero over $V$,
then $x$ must also 
satisfy
some algebraic equation $\psi$ with coefficients in~$\QQ$
that is non-zero over~$V$.
\end{lemma}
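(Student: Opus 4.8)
The plan is to build $\psi$ from $\phi$ by taking a product over the Galois conjugates of the coefficients of $\phi$, so that the result is fixed by the Galois group and hence has coefficients in $\QQ$. First I would replace $\kk'$ by a finite extension: since $x$ satisfies the single equation $\phi$, only finitely many elements of $\kk'$ (the coefficients of $\phi$, together with whatever finite set of elements is needed to define $V$) are actually used, so I may assume $\kk'$ is a finite extension of $\QQ$, and then enlarge it to a finite \emph{Galois} extension $L/\QQ$ containing $\kk'$. Let $G = \mathrm{Gal}(L/\QQ)$. For each $\tau \in G$, let $\phi^\tau$ denote the polynomial obtained by applying $\tau$ to every coefficient of $\phi$, and set
\[
  \psi := \prod_{\tau \in G} \phi^\tau .
\]
Every coefficient of $\psi$ is fixed by $G$, hence lies in $\QQ$; and since $\phi = \phi^{\mathrm{id}}$ is one of the factors, $\psi(x) = 0$. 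So the only real content is to show $\psi$ is not identically zero on $V$.

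The key step is this non-vanishing claim, and it is where the irreducibility of $V$ over $\kk'$ (equivalently, over $L$) enters. Suppose for contradiction that $\psi$ vanishes identically on $V$. Now $V$ is defined over $L$, and $L$ is a field, so the vanishing ideal $I(V) \subseteq L[X_1,\dots,X_n]$ is a prime ideal (this is exactly what irreducibility of $V$ as an $L$-variety means — I would invoke the standard fact, available since we work over a field, that an irreducible variety has a prime vanishing ideal). Since $\psi = \prod_{\tau} \phi^\tau \in I(V)$ and $I(V)$ is prime, some factor $\phi^{\tau_0}$ must lie in $I(V)$, i.e. $\phi^{\tau_0}$ vanishes identically on $V$. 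But $V$ is defined over $\QQ \subseteq L^G$, so $V$ is invariant under the action of $G$ on $L^n$; applying $\tau_0^{-1}$ to the identity ``$\phi^{\tau_0}$ vanishes on $V$'' (and using $\tau_0^{-1}(V) = V$) shows that $\phi = (\phi^{\tau_0})^{\tau_0^{-1}}$ vanishes identically on $V$, contradicting the hypothesis that $\phi$ is non-zero over $V$. Hence $\psi$ is non-zero over $V$, completing the proof.

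The main obstacle — really the only subtle point — is making the Galois-invariance argument on varieties precise: one must be careful that ``$V$ defined over $\QQ$'' genuinely gives $G$-invariance of the point set $V \subseteq L^n$ (or of $V$ base-changed to $\bar\QQ$), and that vanishing of a polynomial along $V$ is compatible with this action. A clean way to sidestep scheme-theoretic fuss is to pass to $\bar\QQ$: $V$ is cut out by polynomials over $\QQ$, so $G$ (indeed $\mathrm{Gal}(\bar\QQ/\QQ)$) permutes its $\bar\QQ$-points, and a polynomial over $L$ vanishes on all of $V$ iff its Galois conjugate vanishes on the Galois-translated variety, which is $V$ again. Everything else — that a product over $G$ of conjugates has rational coefficients, that $\phi$ is a factor, that primality of $I(V)$ follows from irreducibility over a field — is standard commutative algebra, and I would cite it rather than reprove it. Note also that the hypothesis ``$\kk'$ an algebraic extension of $\QQ$'' is exactly what lets us reduce to a \emph{finite} Galois extension; if $\kk'$ were transcendental this argument would need modification, but that case does not arise here.
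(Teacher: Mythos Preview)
Your argument has a genuine gap: you assert that ``$V$ is defined over $\QQ$'' and hence Galois-invariant, but the hypothesis of the lemma only gives that $V$ is defined over $\kk'$. This is not a technicality you can wave away: in the paper the lemma is invoked precisely when $V$ is an irreducible \emph{component} of a variety defined over $\QQ$, and such a component is in general only defined over a proper extension $\kk'$. For $\tau_0 \notin \mathrm{Gal}(L/\kk')$ the conjugate variety $\tau_0^{-1}(V)$ need not equal $V$, so from ``$\phi^{\tau_0}$ vanishes on $V$'' you cannot conclude ``$\phi$ vanishes on $V$''.

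A concrete failure of your construction: take $\kk' = \QQ(\sqrt{2})$, $V = \{(t,\sqrt{2}) : t \in \RR\} \subset \RR^2$, $x = (0,\sqrt{2})$, and $\phi(t,y) = t(y+\sqrt{2})$. Then $\phi(x)=0$ and $\phi$ is not identically zero on $V$, but the nontrivial conjugate $\phi^\sigma(t,y) = t(y-\sqrt{2})$ \emph{does} vanish on $V$, so your product $\psi = \phi\,\phi^\sigma = t^2(y^2-2)$ vanishes identically on $V$. The paper's proof repairs exactly this: it first forms $\phi^{\Sigma} = \phi + \sum_{h\in A}\lambda_h\,h(\phi)$, where $A$ is the set of Galois elements whose conjugate of $\phi$ already vanishes on $V$, and a pigeonhole count (each $h(\phi^{\Sigma})$ is a combination of $|A|+1$ conjugates, at most $|A|$ of which can lie in $A$) shows that for generic rational $\lambda$ \emph{every} conjugate $h(\phi^{\Sigma})$ is nonzero on $V$. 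Only then does taking the product over the Galois group yield a $\psi\in\QQ[X]$ that is nonzero on $V$. Your primality argument is fine; what is missing is this preliminary blending step that neutralises the conjugates in $A$.
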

\begin{proof}
Let $H$ be the Galois group of the (normal closure of) $\kk'$ over
$\QQ$. For $h_i \in H$, denote $h_i(\phi)$ to be the polynomial
where $h_i$ is applied to each coefficient in $\phi$.
Let $A$ be the (possibly empty) ``annihilating  
set'',  such that $\forall h_i \in A$, $h_i(\phi)$ vanishes 
identically over $V$.

Let
\begin{eqnarray}
\phi^{\Sigma} := \phi + \sum_{h_i\in A} \lambda_i h_i(\phi)
\end{eqnarray}
(Where the $\lambda_i \in \QQ $ 
are simply an additional  set of blending weights ).

$\phi^{\Sigma}$ has the following properties:
\begin{itemize}
\item $\phi^{\Sigma}(x)=0$. 
\item 
(For almost every $\lambda$), 
for any $h \in H$, $h(\phi^{\Sigma})$ 
does not vanish identically 
over $V$. This follows since $h(\phi^{\Sigma})$ is made up of a sum
of $|A|+1$ polynomials, where no more than $|A|$ of them can
vanish identically over $V$. Under almost any 
blending weights $\lambda$, their sum will not cancel.
\end{itemize} 

Let
\begin{eqnarray}
\psi := \prod_{h_i\in H}  h_i(\phi^{\Sigma})
\end{eqnarray}

$\psi$ has the following properties:
\begin{itemize}
\item $\psi(x)=0$. 
\item $\psi$ does not vanish over $V$.
\item $h(\psi)=\psi$. Thus $\psi$ has coefficients in the fixed
field, $\QQ$.
\end{itemize}

\end{proof}

The following propositions are standard~\cite{GHT10}:

\begin{proposition}\label{prop:generic-smooth}
  Every generic point of a (semi-)algebraic set is smooth.
\end{proposition}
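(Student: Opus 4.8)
\textbf{Proof proposal for Proposition~\ref{prop:generic-smooth}.}

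The plan is to reduce to the irreducible case and then argue that the non-smooth locus is a proper algebraic (or semi-algebraic) subset, so it cannot contain any generic point. First I would dispose of the reducible case: if $V$ is a (semi-)algebraic set defined over $\kk$ with irreducible components $V_1,\dots,V_k$ (for a semi-algebraic set, take the components of its Zariski closure), then a generic point $x$ of $V$ lies on at least one $V_i$; moreover $x$ must be generic \emph{in that} $V_i$, since any algebraic equation with coefficients in~$\kk$ vanishing on all of $V_i$ but not on all of $V$ would, together with the equations cutting out the other components, exhibit $x$ as lying in a proper subset — but the standard fact here is simply that a generic point of $V$ lying on $V_i$ is generic in $V_i$ (any equation over $\kk$ vanishing identically on $V_i$ would have to vanish on $x$, contradicting genericity only if it vanished on all of $V$, which is handled by passing to the product of the component-selecting polynomials). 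I would also note that $x$ lies on exactly one component, since lying on $V_i \cap V_j$ for $i \neq j$ is an algebraic condition (over a finite extension of $\kk$, harmless by Lemma~\ref{lem:ext} / Remark~\ref{rem:ext}) that is not satisfied on all of $V_i$. Hence near $x$, the set $V$ coincides with the single irreducible component $V_i$.

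So it suffices to treat $V$ irreducible of dimension $t$. The key input is that the singular locus $\mathrm{Sing}(V)$ — the set of points \emph{not} having a neighborhood smoothly isomorphic to $\RR^t$ (resp.\ $\CC^t$), in the sense of Definition~\ref{def:generic-gen} — is itself a (semi-)algebraic set, defined over a finite extension $\kk'$ of~$\kk$, and is a \emph{proper} subset of $V$: by the very definition of $\dim(V) = t$, there exists some Euclidean-open subset of $V$ smoothly isomorphic to $\RR^t$ (resp.\ $\CC^t$), so $\mathrm{Sing}(V) \neq V$. In the complex case $\mathrm{Sing}(V)$ is a genuine algebraic subset; in the real case it is semi-algebraic, and one takes its Zariski closure $\overline{\mathrm{Sing}(V)}$, still proper in $V$ since $V$ is irreducible and $\overline{\mathrm{Sing}(V)}$ has dimension strictly less than $t$ (an algebraic subset of an irreducible set has strictly smaller dimension, as recorded in Definition~\ref{def:generic-gen}). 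Therefore $\overline{\mathrm{Sing}(V)}$ is cut out of $V$ by some algebraic equation with coefficients in $\kk'$ that does not vanish identically on $V$. By Lemma~\ref{lem:ext}, there is then an algebraic equation $\psi$ with coefficients in $\QQ$ (hence in $\kk$) that is non-zero over $V$ but vanishes on $\overline{\mathrm{Sing}(V)} \supseteq \mathrm{Sing}(V)$. A generic point $x$ of $V$, by definition, does not satisfy any $\kk$-equation that is non-zero over $V$; in particular $\psi(x) \neq 0$, so $x \notin \mathrm{Sing}(V)$, i.e.\ $x$ is smooth.

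The main obstacle is purely a matter of what we are allowed to cite: we need the fact that the singular locus is algebraic (resp.\ semi-algebraic) and defined over a finite extension of the ground field. This is entirely standard — in the algebraic-geometry picture $\mathrm{Sing}(V)$ is defined by the vanishing of the appropriate minors of a Jacobian matrix of generators, and field-of-definition issues are controlled exactly as in Remark~\ref{rem:ext} — but since the excerpt develops "smoothness" in the slightly ad hoc differential-geometric form of Definition~\ref{def:generic-gen} rather than via Jacobian rank, the cleanest writeup would first observe that, on an irreducible $V$, a point is smooth in that sense if and only if the Jacobian of a defining system attains rank $n - t$ there, and then invoke the standard description of the singular locus. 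Everything else (the reduction to the irreducible component, the dimension bound, and the descent of the defining equation to $\QQ$) is routine given the results already in the excerpt.
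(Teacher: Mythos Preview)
The paper does not actually prove Proposition~\ref{prop:generic-smooth}; it merely records it as a standard fact, citing~\cite{GHT10}. So there is no ``paper's own proof'' to compare against.

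That said, your argument is the standard one and is essentially correct. The reduction to a single irreducible component is exactly the content of Lemma~\ref{lem:onlyone} (which you could cite directly rather than re-deriving), and the core step---that the singular locus of an irreducible variety is a proper closed subvariety defined over a finite extension, hence missed by any generic point via Lemma~\ref{lem:ext}---is the right mechanism. Your self-identified obstacle is real but minor: the paper's differential-geometric definition of ``smooth'' does need to be reconciled with the Jacobian criterion before the singular locus can be described algebraically, and in the real semi-algebraic case one should be a little careful that the set of non-smooth points (in the sense of Definition~\ref{def:generic-gen}, which excludes points with lower-dimensional smooth neighborhoods) is indeed semi-algebraic of strictly smaller dimension. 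Both of these are routine, and your sketch handles them adequately for a ``standard'' proposition that the paper itself declines to prove.
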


\begin{lemma}
\label{lem:onlyone}
Let $V^+$ be a (semi) algebraic set, not necessarily irreducible,
defined over $\kk$. Let $V$ be a component of $V^+$.
Let $x$ be generic in $V$.
Then $x$ 
does not lie on any other component of $V^+$.
Moreover, any point $x' \in V^+$ that is sufficiently close  to $x$
cannot lie on any other component of $V^+$. 
\end{lemma}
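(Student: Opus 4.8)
\textbf{Proof proposal for Lemma~\ref{lem:onlyone}.}

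The plan is to reduce everything to a statement about dimensions of algebraic sets, using the fact that $V^+$ has only finitely many components. First I would write $V^+ = V \cup V_1 \cup \dots \cup V_m$ as the (finite) decomposition of $V^+$ into its irreducible components, with $V$ being the chosen one. Since $V$ is irreducible, for each $i$ the intersection $V \cap V_i$ is a proper algebraic subset of $V$ (it is algebraic, being an intersection of algebraic sets, and it cannot equal $V$ since $V_i$ is a distinct maximal irreducible component), hence has dimension strictly less than $\dim(V)$ by the property of algebraic subsets of irreducible sets quoted in Definition~\ref{def:generic-gen}. Therefore $W := \bigcup_{i=1}^m (V \cap V_i)$ is a proper algebraic subset of $V$, cut out over a finite extension of $\kk$ (each $V_i$ is defined over such an extension by the standard fact recalled in Remark~\ref{rem:ext}).

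Next I would invoke genericity: $W$ is a proper algebraic subset of the irreducible set $V$, so it is defined by some algebraic equation (with coefficients in a finite extension of $\kk$, and hence, by Lemma~\ref{lem:ext}, by one with coefficients in $\QQ \subseteq \kk$) that does not vanish identically on $V$. Since $x$ is generic in $V$, it cannot satisfy this equation, so $x \notin W$. This already gives the first assertion: $x$ lies on no $V_i$, i.e.\ on no other component of $V^+$.

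For the second assertion, the key point is that $V^+$, being algebraic, is closed in the Euclidean topology, and so is each $V_i$; hence $\bigcup_{i=1}^m V_i$ is closed. Since $x$ is not in this closed set, there is a Euclidean neighborhood $U$ of $x$ disjoint from $\bigcup_{i=1}^m V_i$. Any point $x' \in V^+$ lying in $U$ must then lie in $V$ and in none of the $V_i$, which is exactly the claim. The only mild subtlety — and the step I would be most careful about — is the appeal to Lemma~\ref{lem:ext} to bring the defining field back down to $\QQ$ (equivalently $\kk$): one must check that the components $V_i$, and hence $W$, really are defined over a \emph{finite} extension of $\kk$, which is precisely the standard algebraic fact cited in Remark~\ref{rem:ext}, so this causes no real difficulty. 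For the semi-algebraic case one runs the same argument on the Zariski closures, noting that a point generic in $V$ (in the semi-algebraic sense) is generic in its Zariski closure by Remark~\ref{rem:ext}.
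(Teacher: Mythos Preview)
Your argument is correct and follows essentially the same route as the paper's proof: both use that each other component $V_i$ is defined over a finite extension of~$\kk$ (Remark~\ref{rem:ext}), so its defining equations---which cannot vanish identically on $V$---obstruct the genericity of~$x$; and both then invoke Euclidean closedness of the $V_i$ for the ``sufficiently close'' part. Your version simply spells out more explicitly the intermediate step of forming $W=\bigcup_i (V\cap V_i)$ and appealing to Lemma~\ref{lem:ext} to descend the field, whereas the paper compresses this into two sentences.
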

\begin{proof}
As per Remark~\ref{rem:ext} any component must be defined over an 
algebraic extension of $\kk$. The  defining equations of any other 
component would 
produce
an equation obstructing the genericity of $x$  in $V$.
Since a variety is a closed set in the Euclidean topology, 
no other component of $V^+$ can approach $x$.
\end{proof}

\begin{lemma}
  \label{lem:image-generic}
  Let $V\!$ and $W\!$ be (semi) algebraic sets 
with $V$ irreducible, 
and let $f : V \to W$
  be a surjective
(or just dominant) 
algebraic map (ie. where each of the coordinates of $f(x)$ is a some
polynomial expression in the coordinates of $x$), 
all defined over~$\kk$.  Then if $x
  \in V\!$ is generic, $f(x)$ is generic inside~$W\!$.
\end{lemma}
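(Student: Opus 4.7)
My plan is to prove the contrapositive: assume $f(x)$ fails to be generic in $W$ and construct a polynomial over $\kk$ witnessing failure of genericity of $x$ in $V$. So suppose there is a polynomial $\phi$ with coefficients in $\kk$ that vanishes at $f(x)$ but does not vanish identically on $W$ (equivalently, on the Zariski closure of $W$, if $W$ is only semi-algebraic).

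The key step is to pull $\phi$ back via $f$. Since $f$ has polynomial coordinates with coefficients in $\kk$, the composition $\psi := \phi \circ f$ is a polynomial in the coordinates of the ambient space of $V$, again with coefficients in $\kk$. Clearly $\psi(x) = \phi(f(x)) = 0$. It remains to show that $\psi$ does not vanish identically on $V$; granted this, $\psi$ is an algebraic relation over $\kk$ which is not identically satisfied by every point of $V$, contradicting the genericity of $x$ in $V$.

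The nontrivial step is verifying that $\psi$ does not vanish identically on $V$. This is exactly where the dominant/surjective hypothesis enters: by assumption the image $f(V)$ is dense in $W$ (in the Zariski topology, since dominant means the Zariski closure of the image equals that of $W$). Hence if $\phi$ vanished on all of $f(V)$, it would vanish on the Zariski closure of $f(V)$, which contains $W$, contradicting the choice of $\phi$. Thus there is some $y \in V$ with $\psi(y) = \phi(f(y)) \ne 0$, as required.

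The only subtle point is the mildly ambiguous meaning of ``does not vanish on $W$'' in the semi-algebraic setting; I would resolve this by interpreting genericity on a semi-algebraic set as genericity on its real Zariski closure (consistent with the rest of this appendix), and then ``dominant'' means that the Zariski closure of $f(V)$ contains the Zariski closure of $W$, so the argument above applies verbatim. Everything else is purely formal — there is no serious obstacle once the pullback $\psi = \phi \circ f$ is written down.
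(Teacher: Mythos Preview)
Your argument is correct and is exactly the standard pullback argument one would expect. The paper does not actually prove this lemma: it is stated in the appendix under the heading ``The following propositions are standard~\cite{GHT10}'' with no proof given, so there is no paper proof to compare against beyond noting that your write-up is the canonical one.
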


\begin{definition}
\label{def:cxify}
The \emph{complexification} $V^*$ of a real variety $V$ is the smallest
complex variety that contains $V$~\cite{Whitney57:ElemStructRealVarieties}. 
The complex dimension of $V^*$ is equal to the real dimension
of $V$. If $V$ is irreducible, then so is $V^*$. 
If $V$ is defined over $\kk$, so is $V^*$.
A generic point in $V$ is also generic in $V^*$.
\end{definition}

\section{Congruence}
\label{app:cong}

The following material is standard  and 
is included here for completeness.
This presentation is adapted from~\cite{inv,invThesis}.

In \emph{all} discussions in this section, we will assume that we have
first  translated
any configuration, say  
$p \in C_{\CC^d}(\Verts)$ 
so that its first vertex lies at the origin.
We  then treat the rest of the vertices as vectors in $\CC^d$,
and call them the \emph{vectors of $p$}.

\begin{definition}
We define the symmetric billinear form $\beta(\vec{v},\vec{w})$ 
over pairs of vectors,
$\{\vec{v},\vec{w}\}$
in $\CC^d$ as $\beta(\vec{v},\vec{w}) := \VB^t \WB$ where $\VB$ is the 
$d$ by $1$ (canonical) coordinate vector
of $\vec{v}$. (No conjugation is used here).
If $O$ is an orthogonal transformation on $\CC^d$, we have
$\beta(\vec{v},\vec{w})=\beta(O(\vec{v}),O(\vec{w}))$.

$\beta$ is non degenerate: there is no non-zero vector,
 $\vec{v}$, such that 
$\beta(\vec{v},\vec{w})=0$ for all $\vec{w}\in \CC^d$.

The squared length of a vector 
$\vec{v}$ is simply $\beta(\vec{v},\vec{v})$

With this notation, the $v-1$ by $v-1$ 
g-matrix 
has entries 
$\gB(p)_{t,u} = \beta(\bvec{p(t)},\bvec{p(u)})$.

For the case of 
the pseudo Euclidean space $\SSS^d$ 
we define $\beta(\vec{v},\vec{w}) := \VB^t \SB \WB$, where 
$\SB$ is the 
$d$ by $d$ diagonal
``signature matrix''
 having its first  $s$ diagonal entries $-1$, and the remaining
diagonal entries $1$.
\end{definition}

\begin{lemma}
\label{lem:nonsing}
Let $p_0$ 
be a  configuration of $d+1$ points in $\CC^d$,
with affine span of dimension $d$. 
Then 
$\gB(p_0)$ has rank $d$.  
The same is true in a pseudo Euclidean space $\SSS^d$.
\end{lemma}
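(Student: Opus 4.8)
The claim is that for a configuration $p_0$ of $d+1$ points in $\CC^d$ (or in $\SSS^d$) whose affine span is all of $\CC^d$, the $d\times d$ g-matrix $\gB(p_0)$ has full rank $d$. After translating so the first vertex is at the origin, the remaining $d$ vertices become vectors $\vec v_1,\dots,\vec v_d$ in $\CC^d$, and the affine span being $d$-dimensional is exactly the statement that these $d$ vectors are linearly independent, i.e.\ that the $d\times d$ matrix $\PB$ whose columns are the $\vec v_j$ is invertible. Since $\gB(p_0)_{t,u} = \beta(\vec v_t,\vec v_u) = \vec v_t^{\,t}\SB\,\vec v_u$ (with $\SB = I$ in the complex Euclidean case and $\SB$ the signature matrix in the pseudo-Euclidean case), we have the matrix identity $\gB(p_0) = \PB^t \SB \PB$.

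\textbf{Key steps.} First I would invoke the translation normalization from Appendix~\ref{app:cong} so that the vectors of $p_0$ are genuine vectors in $\CC^d$, and restate the affine-span hypothesis as invertibility of $\PB$. Second, I would write down the factorization $\gB(p_0) = \PB^t \SB \PB$ directly from the definition of $\beta$ and of the g-matrix. Third, I would compute the rank: $\rank(\PB^t\SB\PB)$. Since $\PB$ is a $d\times d$ invertible matrix over $\CC$ and $\SB$ is diagonal with all nonzero (indeed $\pm 1$) entries — hence invertible — the product $\PB^t\SB\PB$ is a product of three invertible matrices and is therefore invertible, so $\rank(\gB(p_0)) = d$. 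The pseudo-Euclidean case is handled by exactly the same computation, the only change being that $\SB$ has some $-1$ entries, which does not affect invertibility.

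\textbf{Anticipated obstacle.} There is essentially no obstacle here; the one point that deserves a sentence of care is that over $\CC$ we are \emph{not} using conjugation, so $\beta$ is a symmetric bilinear form rather than a Hermitian one, and one must not be tempted to argue via positive-definiteness (which fails — isotropic vectors exist). The correct argument is purely the ``product of invertible matrices'' one above, and it is insensitive to the signature of $\SB$; the nondegeneracy of $\beta$ noted in Appendix~\ref{app:cong} is exactly what makes $\SB$ invertible. I would also remark that this lemma is the $d'=d$, $v=d+1$ special case of the span/rank correspondence already used implicitly in Lemma~\ref{lem:sig0}, so consistency with that material is automatic.
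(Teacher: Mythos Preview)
Your proof is correct and is essentially the same argument as the paper's, just made explicit: the paper observes that $\gB(p_0)$ is the matrix of the nondegenerate bilinear form $\beta$ with respect to the basis given by the vectors of $p_0$, and concludes immediately that it has full rank; your factorization $\gB(p_0)=\PB^t\SB\PB$ with $\PB$ and $\SB$ invertible is exactly the computation underlying that standard fact. Your care to avoid any positive-definiteness argument (which indeed fails over $\CC$ and for indefinite $\SB$) is well placed and matches the paper's reliance on nondegeneracy alone.
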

\begin{proof}
The 
matrix 
$\gB(p_0)$ represents the form $\beta$, over all of $\CC^d$,
expressed in the 
basis defined by the vectors of $p_0$.
Since $\beta$ is a non-degenerate form, 
$\gB(p_0)$ must have
rank $d$.
\end{proof}

\begin{lemma}
\label{lem:cong1}
Let $p_0$ and $q_0$ 
be two congruent configurations of $a+1$ points in $\CC^d$,
both with 
affine span of dimension $a$. 
Then $p_0$ is strongly congruent to  $q_0$.
The same is true in a pseudo Euclidean space $\SSS^d$.
\end{lemma}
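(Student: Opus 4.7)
The plan is to reduce strong congruence to the preservation of the bilinear form $\beta$ on an $a$-dimensional subspace, then extend to the ambient space via Witt's extension theorem.

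First I would translate both $p_0$ and $q_0$ so that their first vertices lie at the origin; since translations are part of the strong congruence group, this is harmless. After translation, the remaining $a$ points of $p_0$ become vectors $\vec{p_1},\ldots,\vec{p_a}$, which are linearly independent by the affine-span hypothesis, and similarly the remaining points of $q_0$ become linearly independent vectors $\vec{q_1},\ldots,\vec{q_a}$.

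Next I would use the congruence hypothesis to deduce that $\gB(p_0)=\gB(q_0)$. Each diagonal entry $\gB(p_0)_{u,u} = \beta(\vec{p_u},\vec{p_u})$ is the squared length $|p_0(1)-p_0(u)|^2$, and each off-diagonal entry is recovered from
\[
|p_0(t)-p_0(u)|^2 = \beta(\vec{p_t},\vec{p_t}) - 2\beta(\vec{p_t},\vec{p_u}) + \beta(\vec{p_u},\vec{p_u}),
\]
so the g-matrix is determined by the pairwise squared lengths, and congruence yields $\gB(p_0)=\gB(q_0)$. Consequently the linear map $T$ sending $\vec{p_i}\mapsto\vec{q_i}$ is a well-defined linear isomorphism from $\spanop(\vec{p_1},\ldots,\vec{p_a})$ onto $\spanop(\vec{q_1},\ldots,\vec{q_a})$ that preserves $\beta$, i.e., an isometry between these two $a$-dimensional subspaces of $\CC^d$ (respectively $\SSS^d$).

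The final step is to invoke Witt's extension theorem: any isometry between two subspaces of a vector space equipped with a non-degenerate symmetric bilinear form (over a field of characteristic $\neq 2$) extends to an orthogonal transformation of the whole space. Composing this orthogonal extension with the initial translations produces a strong congruence carrying $p_0$ to $q_0$, as desired. The main obstacle is that when $a<d$ the subspace $\spanop(\vec{p_1},\ldots,\vec{p_a})$ may itself be degenerate with respect to $\beta$ (for example by containing isotropic vectors in $\CC^d$ or in the indefinite cases of $\SSS^d$), so one cannot simply extend an orthonormal basis; it is exactly Witt's theorem, in its general form applying to possibly degenerate subspaces, that bridges this gap and is the essential ingredient.
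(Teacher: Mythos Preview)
Your proposal is correct and follows essentially the same route as the paper: translate to put the first vertex at the origin, use congruence (equivalently $\gB(p_0)=\gB(q_0)$) to see that the linear map sending $\vec{p_i}\mapsto\vec{q_i}$ is an isometry between the two $a$-dimensional spans, and then invoke Witt's extension theorem to extend this to an orthogonal transformation of the whole space. Your added remark about the spans possibly being degenerate with respect to $\beta$, and that Witt's theorem in its general form is exactly what handles this, is a nice clarification of the key point.
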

\begin{proof}
Since the  vectors of 
$p_0$ and $q_0$ are in general linear position, 
we can find an invertible 
linear
transform   $O_0$ such that, for all of the vectors of $p_0$
and $q_0$, indexed by a vertex $t$, 
we have
$\bvec{q(t)}  = O_0(\bvec{p(t)})$.
(The action of $O_0$ is uniquely defined between 
$\spanop(p_0)$ and $\spanop(q_0)$,
the a-dimensional linear spaces spanned by the vectors of $p_0$
and the vectors of $q_0$.)

The 
matrix 
$\gB(p_0)$ represents the form $\beta$,
restricted to $\spanop(p_0)$,
expressed in the 
basis defined by the vectors of $p_0$, 
while 
$\gB(q_0)$ represents $\beta$,
restricted to $\spanop(q_0)$,
expressed in the 
basis defined by the vectors of $q_0$, 
Since $\gB(p_0)=\gB(q_0)$, the map $O_0$ must act as an isometry between 
all of $\spanop(p_0)$ and $\spanop(q_0)$.

If $a=d$ we are done. 
Otherwise, from Witt's theorem (see~\cite{inv}),  the isometric
action of 
$O_0$ between $\spanop(p_0)$ and $\spanop(q_0)$ can be 
can be
extended to an isometry, $O$, acting on all of $\CC^d$.
Thus $p_0$ and $q_0$ must be strongly congruent.
\end{proof}

\begin{lemma}
\label{lem:cong2}
Let $p$ and $q$ 
be two congruent configurations of $v$ points in $\CC^d$,
both with affine span of dimension $a$. Suppose also that 
$\gB(p)=\gB(q)$ has rank $a$.  
Then  $p$ is strongly congruent to  $q$.
The same is true in a pseudo Euclidean space $\SSS^d$.
\end{lemma}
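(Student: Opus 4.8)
The idea is to reduce this to Lemma~\ref{lem:cong1}, which already handles the case of $a+1$ points in affine general position. First I would pick a subconfiguration: since $p$ has affine span of dimension $a$, I can choose $a+1$ of its vertices, say with index set $I$, whose images under $p$ are in affine general position (i.e.\ they affinely span the $a$-dimensional affine span of $p$). Translating as usual so the first vertex is at the origin, the corresponding vectors $\{\bvec{p(t)}:t\in I\setminus\{1\}\}$ form a basis of $\spanop(p)$, and the submatrix of $\gB(p)$ indexed by $I$ is exactly $\gB(p_0)$ where $p_0$ is the restriction of $p$ to $I$. By Lemma~\ref{lem:nonsing}, $\gB(p_0)$ has rank $a$.

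Next I would show the corresponding subconfiguration $q_0 = q|_I$ is also in affine general position with $a$-dimensional span. Because $\gB(p)=\gB(q)$, we have $\gB(q_0)=\gB(p_0)$, which has rank $a$; hence the vectors of $q_0$ span a space of dimension at least $a$, so exactly $a$ (as $q$ has span of dimension $a$), and $q_0$ is in affine general position. Now $p_0$ and $q_0$ are congruent configurations of $a+1$ points, each with $a$-dimensional affine span, so Lemma~\ref{lem:cong1} gives an orthogonal transformation $O$ of $\CC^d$ (resp.\ of $\SSS^d$) and a translation taking $p_0$ to $q_0$; concretely, after translating both configurations to put the first vertex at the origin, $O$ acts isometrically on $\spanop(p_0)=\spanop(p)$ carrying each $\bvec{p(t)}$, $t\in I$, to $\bvec{q(t)}$.

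Finally I would argue that this same $O$ works for \emph{all} vertices, not just those in $I$. For any other vertex $u$, the vector $\bvec{p(u)}$ lies in $\spanop(p)$ (since the span has dimension $a$ and is already spanned by the $I$-vectors), so $O(\bvec{p(u)})$ is determined, and I must check $O(\bvec{p(u)})=\bvec{q(u)}$. Write $\bvec{p(u)} = \sum_{t} c_t \bvec{p(t)}$ over $t\in I\setminus\{1\}$. Applying $O$ and using $O(\bvec{p(t)})=\bvec{q(t)}$ gives $O(\bvec{p(u)}) = \sum_t c_t \bvec{q(t)}$, so it suffices to show $\bvec{q(u)} = \sum_t c_t \bvec{q(t)}$ with the \emph{same} coefficients. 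This is where the rank hypothesis on $\gB(p)=\gB(q)$ is essential. Taking the form $\beta$ against each basis vector $\bvec{p(s)}$, $s\in I\setminus\{1\}$: the entries $\gB(p)_{s,u} = \beta(\bvec{p(s)},\bvec{p(u)}) = \sum_t c_t \beta(\bvec{p(s)},\bvec{p(t)}) = \sum_t c_t \gB(p)_{s,t}$, and similarly $\gB(q)_{s,u} = \sum_t c'_t \gB(q)_{s,t}$ where $\bvec{q(u)} = \sum_t c'_t \bvec{q(t)}$ (such $c'_t$ exist since $\bvec{q(u)}\in\spanop(q)$, which is spanned by the $I$-vectors of $q$). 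Since $\gB(p)=\gB(q)$ and the matrix $(\gB(p)_{s,t})_{s,t\in I\setminus\{1\}} = \gB(p_0)$ is invertible (rank $a$), the linear system $(\gB(p_0)) c = \gB(p)_{\cdot,u} = (\gB(p_0)) c'$ forces $c=c'$. Hence $O(\bvec{p(u)}) = \bvec{q(u)}$ for every vertex, so $p$ is strongly congruent to $q$. The argument is identical in $\SSS^d$, using the signature-matrix form of $\beta$ throughout.

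The main obstacle is the last step: ensuring the single transformation $O$ obtained from the $(a{+}1)$-point subconfiguration simultaneously matches \emph{all} the remaining vertices. The rank-$a$ hypothesis on $\gB(p)=\gB(q)$ is exactly what is needed — it guarantees both that the relevant linear systems are nondegenerate (so the affine coordinates are uniquely determined and agree for $p$ and $q$) and, via Lemma~\ref{lem:nonsing}, that the chosen subconfiguration genuinely realizes a full-rank Gram matrix on which Lemma~\ref{lem:cong1} can be invoked.
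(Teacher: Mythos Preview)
Your argument is correct and follows essentially the same route as the paper: pick an $(a{+}1)$-vertex subconfiguration, apply Lemma~\ref{lem:cong1} to get the isometry $O$, then use invertibility of the $a\times a$ principal submatrix to show the coordinates of every remaining vertex in the chosen basis agree for $p$ and $q$.

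One small correction: your invocation of Lemma~\ref{lem:nonsing} to conclude that $\gB(p_0)$ has rank~$a$ is not valid as stated. That lemma is about $d{+}1$ points with full $d$-dimensional span; for $a<d$ the restriction of $\beta$ to an $a$-dimensional subspace of $\CC^d$ can be degenerate (e.g.\ the line through $(1,i)$ in $\CC^2$), so affine independence of $p_0$ alone does not force $\gB(p_0)$ to have rank~$a$. The conclusion is nevertheless true, but it comes from the rank hypothesis on $\gB(p)$: writing every vector of $p$ in your basis gives $\gB(p)=C^t\,\gB(p_0)\,C$, so $a=\rank\gB(p)\le\rank\gB(p_0)\le a$. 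The paper sidesteps this by reversing the order of selection: it first takes an $a\times a$ nonsingular principal submatrix of $\gB(p)$ (guaranteed by the rank hypothesis) and lets \emph{that} determine the vertex subset, which then automatically has $a$-dimensional span in both $p$ and $q$.
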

\begin{proof}
Since $\gB(p)$ has rank $a$, it must have some $a$ by $a$ non-singular
principal submatrix, associated with 
a 
subset of $a$ vertices.
The vertices in this subset must have a linear span of dimension $a$ in
both $p$ and $q$.
We denote by $p_0$ the configuration $p$ restricted to the
$a+1$ vertices comprised of 
this subset
together with the first vertex (at the origin).
And  likewise for $q_0$.
From Lemma~\ref{lem:cong1} there must be an isometry $O$ of
$\CC^d$, such that for any vertex $t$ in $p_0$, we have
$\bvec{q_0(t)}=O(\bvec{p_0(t)})$.

For  any vertex $u \in \Verts$, by our assumption on the dimension
of the affine span of $p$ and $q$, we have 
$\bvec{p(u)} \in \spanop(p_0)$ and 
$\bvec{q(u)} \in \spanop(q_0)$.
Since $\gB(p_0)=\gB(q_0)$ is invertible, 
the coordinates of 
$\bvec{p(u)}$ with respect to the basis
$p_0$, can be determined  from the appropriate
entries in 
$\gB(p)$. 
Likewise, the coordinates of 
$\bvec{q(u)}$ with respect to the basis 
$q_0$, can be determined from
$\gB(q)$. 
Since $\gB(p)=\gB(q)$ these coordinates must be the same.
Thus $\bvec{q(u)} = O(\bvec{q(u)})$,
and
$p$ and $q$ are  strongly congruent.

\end{proof}

\begin{corollary}
\label{cor:cong3}
Let $p$ and $q$ 
be two congruent configurations of $v \geq d+1$ points in $\CC^d$,
both with a d-dimensional affine span.
Then  $p$ is strongly congruent to  $q$.
If  $v < d+1$, and
$p$ and $q$ are in general position,
then  $p$ is strongly congruent to  $q$.

The same is true in a pseudo Euclidean space $\SSS^d$.
\end{corollary}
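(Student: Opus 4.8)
The plan is to reduce Corollary~\ref{cor:cong3} to the two lemmas that immediately precede it, handling the case $v \geq d+1$ and the case $v < d+1$ separately, and then observe that every step of the argument is valid both over $\CC^d$ and over $\SSS^d$ since Lemmas~\ref{lem:nonsing}, \ref{lem:cong1}, and~\ref{lem:cong2} are all stated for both settings.

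For the first case, suppose $p$ and $q$ are congruent configurations of $v \geq d+1$ points with $d$-dimensional affine span. The key point is that the affine span being $d$-dimensional forces $\gB(p)$ to have rank exactly $d$: pick any $d+1$ of the points whose affine span is all of $\FF^d$, apply Lemma~\ref{lem:nonsing} to that sub-configuration to see that its $d \times d$ g-matrix has rank $d$, and note that this is a principal submatrix of $\gB(p)$, so $\gB(p)$ has rank at least $d$; since $\gB(p)$ is a $v-1$ by $v-1$ complex symmetric matrix coming from a configuration in $\FF^d$ it has rank at most $d$, hence exactly $d$. By Corollary~\ref{cor:inj} (congruence $\iff$ equal g-matrices), $\gB(p) = \gB(q)$, so $\gB(q)$ also has rank $a = d$. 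Now Lemma~\ref{lem:cong2} with $a = d$ applies directly and yields that $p$ is strongly congruent to $q$.

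For the second case, $v < d+1$ with $p$ and $q$ in general position: here ``general position'' means the $v$ points have affine span of dimension $v-1$, which is their maximum possible dimension. So both configurations have affine span of dimension $a = v - 1$, and by the same reasoning as above (applying Lemma~\ref{lem:nonsing} to the full set of $v$ points regarded as lying in the $a$-dimensional affine span, or more precisely to the associated sub-configuration), $\gB(p)$ has rank $a$. Again $\gB(p) = \gB(q)$ by Corollary~\ref{cor:inj}, so we may invoke Lemma~\ref{lem:cong2} with this value of $a$ to conclude strong congruence.

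I do not expect a serious obstacle here; the corollary is essentially a bookkeeping repackaging of Lemma~\ref{lem:cong2}, whose hypothesis is ``$\gB(p) = \gB(q)$ has rank $a$,'' and the only content is checking that the two stated geometric hypotheses each imply that rank condition. The one point to be careful about is making sure the rank-$d$ (resp.\ rank-$a$) claim is justified cleanly --- that a $d$-dimensional (resp.\ $a$-dimensional) affine span really does force the g-matrix to have that rank --- and that the pseudo-Euclidean versions of Lemmas~\ref{lem:nonsing}, \ref{lem:cong1}, \ref{lem:cong2} are exactly what is needed so that the final sentence ``the same is true in $\SSS^d$'' requires no extra argument beyond citing those lemmas in their $\SSS^d$ form.
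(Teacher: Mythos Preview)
Your first case is essentially the paper's argument: use Lemma~\ref{lem:nonsing} on a spanning $(d{+}1)$-subset to force a rank-$d$ principal submatrix of $\gB(p)$, hence $\gB(p)$ has rank $d$, and then invoke Lemma~\ref{lem:cong2}. (The paper is slightly more careful to include the first vertex in the chosen subset so that the sub-g-matrix is literally a principal submatrix of $\gB(p)$, but this is a detail.)

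The second case, however, has a genuine gap. You want to conclude that $\gB(p)$ has rank $a=v-1$ by ``applying Lemma~\ref{lem:nonsing} to the full set of $v$ points regarded as lying in the $a$-dimensional affine span.'' But Lemma~\ref{lem:nonsing} is stated for $d+1$ points spanning all of $\CC^d$ (or $\SSS^d$), and its proof uses that $\beta$ is nondegenerate on the \emph{whole} space. The restriction of $\beta$ to a proper subspace can be degenerate, so the lemma does not transfer. Concretely, take $d=2$, $v=2$, and $p=\bigl((0,0),(1,i)\bigr)$ in $\CC^2$: the two points are in general position (affine span of dimension $1$), yet $\gB(p)$ is the $1\times 1$ matrix $[\,1^2+i^2\,]=[0]$, which has rank $0$, not $a=1$. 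So the hypothesis of Lemma~\ref{lem:cong2} is not met, and your route through it breaks down.

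The paper avoids this by invoking Lemma~\ref{lem:cong1} directly in the second case: with $v<d+1$ and general position, $p$ and $q$ are each $a{+}1$ points with $a$-dimensional affine span, which is exactly the hypothesis of Lemma~\ref{lem:cong1}, and that lemma needs no rank assumption (its proof goes through Witt's theorem, which handles degenerate restrictions). Swapping your appeal to Lemma~\ref{lem:cong2} for Lemma~\ref{lem:cong1} in the second case fixes the argument.
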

\begin{proof}

For the first statement, 
we can pick 
$d$ vertices, together with the first vertex at the origin,
to form a subset of size $d+1$,
that has  a linear span of dimension $d$ in
$p$. 
We denote by $p_0$ the configuration $p$ restricted to this subset. 
From Lemma~\ref{lem:nonsing}, 
the principal submatrix of $\gB(p)$ associated with this basis
must have rank $d$. 
The result then follows
from Lemma~\ref{lem:cong2}.

If $v \leq d+1$ and the points are in general position, then 
the result follows directly from Lemma~\ref{lem:cong1}.

\end{proof}

\bibliographystyle{acm}
\bibliography{alggeom,graphs,topo}
\end{document}